\definecolor{labelkey}{rgb}{0.6,0,1}
\numberwithin{equation}{section}
\newcounter{testcaseR}
\renewcommand{\thetestcaseR}{R}
\newcommand{\tcaseR}[2]{\medskip\noindent\refstepcounter{testcaseR}\label{#1}\emph{Test case \thetestcaseR: #2.}}
\newcommand{\refR}[1]{\ref{#1}}
\newcounter{testcaseP}
\renewcommand{\thetestcaseP}{P\arabic{testcaseP}}
\newcommand{\tcaseP}[2]{\medskip\noindent\refstepcounter{testcaseP}\label{#1}\emph{Test case \thetestcaseP: #2.}}
\newcommand{\refP}[1]{\ref{#1}}
\newcounter{testcaseS}
\renewcommand{\thetestcaseS}{S\arabic{testcaseS}}
\newcommand{\tcaseS}[2]{\medskip\noindent\refstepcounter{testcaseS}\label{#1}\emph{Test case \thetestcaseS: #2.}}
\newcommand{\refS}[1]{\ref{#1}}
\newtheorem{theorem}{Theorem}[section]
\newtheorem{lemma}[theorem]{Lemma}
\newtheorem{corollary}[theorem]{Corollary}
\theoremstyle{remark}
\newtheorem{remark}[theorem]{Remark}
\theoremstyle{definition}
\newtheorem{assumption}[theorem]{Assumption}
\newtheorem{definition}[theorem]{Definition}
\newcommand{\R}{\mathbb{R}}
\newcommand{\N}{\mathbb{N}}
\newcommand{\Poly}[1]{\mathbb{P}^{#1}}
\NewDocumentCommand{\disc}{ O{} O{}}{{\mathcal D_{#1}^{#2}}}
\newcommand{\discs}[1][]{\disc[*][#1]}
\renewcommand{\div}{\mathrm{div}}
\newcommand{\XDz}{X_{\disc,0}}
\newcommand{\PiD}{\Pi_{\disc}}
\newcommand{\QD}[1][]{Q_{\disc[#1]}}
\newcommand{\PiDs}[1][]{{\Pi_{\discs[#1]}}}
\newcommand{\nablaD}{\nabla_{\disc}}
\newcommand{\ID}{\mathcal I_{\disc}}
\newcommand{\XDmz}{X_{\disc[m],0}}
\newcommand{\PiDm}{\Pi_{\disc[m]}}
\newcommand{\nablaDm}{\nabla_{\disc[m]}}
\newcommand{\norm}[2][]{\|#2\|_{#1}}
\newcommand{\bw}{\overline{w}}
\newcommand{\bu}{\overline{u}}
\newcommand{\bv}{\overline{v}}
\newcommand{\x}{{\bm x}}
\def\term{\mathfrak{T}}   
\newcommand{\pwpart}{{U}}
\newcommand{\proj}[1][\pwpart]{\mathrm{Pr}_{#1}}
\newcommand{\mesh}{\mathcal M}
\newcommand{\gen}{{\rm g}}
\newcommand{\rand}{{\rm r}}
\newcommand{\unif}{{\rm u}}
\newcommand{\equil}{{\rm e}}
\newcommand{\spl}{{\rm s}}
\newcommand{\di}{{\rm 1/4}}
\newcommand{\fe}{\mbox{\textsc{\scriptsize fe}}}
\newcommand{\dg}{\mbox{\textsc{\scriptsize dg}}}
\newcommand{\umesh}{\mesh^\unif_N}
\newcommand{\rmesh}{\mesh^\rand_N}
\newcommand{\faces}{{\mathcal F}}           
\newcommand{\vertices}{\mathcal V}
\def\ctel#1{\ensuremath{\Cl[ctrcst]{#1}}}
\def\cter#1{\ensuremath{\Cr{#1}}}
\begin{document}

\title[High-order mass-lumped schemes for nonlinear degenerate elliptic equations]{High-order mass-lumped schemes \\ for nonlinear degenerate elliptic equations}
\author{J\'er\^ome Droniou}
\address[J. Droniou]{School of Mathematics, Monash University, Melbourne, Australia.} 
\email{jerome.droniou@monash.edu}

\author{Robert Eymard}
\address[R. Eymard]{Laboratoire d'Analyse et de Math\'ematiques Appliqu\'ees, Universit\'e Paris-Est Marne-la-Vall\'ee
Champs-sur-Marne, France.}
\email{Robert.Eymard@u-pem.fr}

\thanks{
This work was supported by the Australian Government through the Australian Research Council's Discovery Projects funding scheme (project DP170100605), and by the French Government through the Agence Nationale de la Recherche (project CHARMS, ANR-16-CE06-0009).
}

\begin{abstract}
	We present and analyse a numerical framework for the approximation of nonlinear degenerate elliptic equations of the Stefan or porous medium types. This framework is based on piecewise constant approximations for the functions, which we show are essentially necessary to obtain convergence and error estimates. Convergence is established without regularity assumption on the solution. 
A detailed analysis is then performed to understand the design properties that enable a scheme, despite these piecewise constant approximations and the degeneracy of the model, to satisfy high-order error estimates if the solution is piecewise smooth. Numerical tests, based on continuous and discontinuous approximation methods, are provided on a variety of 1D and 2D problems, showing the influence on the convergence rate of the nature of the degeneracy and of the design choices.
  \medskip\\
  \textbf{Key words:} Stefan problem, porous medium equation, nonlinear degenerate elliptic equations, numerical scheme, mass-lumping, Gradient Discretisation Method,  error estimate, Finite Elements, Discontinuous Galerkin.
  \medskip\\
  \textbf{AMS subject classification:} 65N12, 65N15, 65N30, 35J70

\end{abstract}

\maketitle

%------------------------------------------------------------------------------%
% Introduction
%------------------------------------------------------------------------------%

\section{Introduction}

The goal of numerical methods for partial differential equations is to approximate, as accurately as possible, the continuous solution. For mesh-based methods, it is well-known that when the problem is linear and the solution has sufficient regularity properties, for a fixed number of degrees of freedom high-order methods provide more accurate solutions than low-order methods. This result must however be questioned in the case of nonlinear problems for which, even if the solution is smooth enough, stability and high-order estimates might not be achievable without the proper structure of the chosen discretisation.
We propose in this work to explore this question, considering the following nonlinear degenerate elliptic equation as the basis of our discussion:
\begin{equation}\label{stefan:strong}
	\begin{aligned}
	\beta(\bu)-\div(\Lambda\nabla \zeta(\bu))={}&f+\div(F)&\mbox{ in $\Omega$},\\
	\zeta(\bu)={}&0&\mbox{ on $\partial\Omega$}.
	\end{aligned}
\end{equation}
The corresponding weak formulation for this problem is
\begin{equation}\label{stefan:weak}
\begin{aligned}
&\mbox{Find $\bu\in L^2(\Omega)$ such that $\zeta(\bu)\in H^1_0(\Omega)$ and}\\
&\int_\Omega \beta(\bu) \bv +\int_\Omega \Lambda\nabla\zeta(\bu) \cdot\nabla \bv = \int_\Omega f\bv-\int_\Omega F\cdot\nabla \bv\,,\qquad\forall \bv\in H^1_0(\Omega).
\end{aligned}
\end{equation}
Throughout the paper, we denote by $\norm[L^2]{{\cdot}}$ the norms in $L^2(\Omega)$ or $L^2(\Omega)^d$, and we make the following assumptions:
\begin{subequations}
\begin{align}
\bullet~&\Omega \mbox{ is an open bounded connected subset of $\R^d$ ($d\in\N^\star$)}, \label{assum:Omega} \\
\bullet~&\zeta:\R\to\R\mbox{ is continuous and non-decreasing, $\zeta(0) = 0$ and,}\nonumber\\
&\mbox{ for some $M_0,M_1>0$, }|\zeta(s)| \ge M_0 |s| - M_1\mbox{  for all } s\in\R,
\label{assum:zeta}\\
\bullet~&
\beta:\R\to\R\hbox{ is continuous and non-decreasing, $\beta(0) = 0$ and,}\nonumber\\
&\mbox{ for some $K_0,K_1>0$, }|\beta(s)| \le K_0 |s| + K_1\mbox{  for all } s\in\R,
\label{assum:beta}\\
\bullet~&\label{assum:incr}\beta+\zeta:\R\to\R\mbox{ is strictly increasing,}\\
 \intertext{}
\bullet~& 
\Lambda:\Omega\to \mathcal M_d(\R) \hbox{ is measurable and
there exists $\overline{\lambda}\ge\underline{\lambda}>0$ such that,}\nonumber\\
&\mbox{for a.e. $\x\in\Omega$, $\Lambda(\x)$ is symmetric with eigenvalues in $[\underline{\lambda},\overline{\lambda}]$.}
\label{assum:lambda}\\
% \intertext{and}
\bullet~&
f \in  L^2(\Omega)\quad\mbox{and}\quad F\in L^2(\Omega)^d. 
\label{assum:f}
\end{align}
\label{eq:assum}
\end{subequations}
\begin{remark}[Growth assumptions]
The super-linearity of $\zeta$ assumed in \eqref{assum:zeta} ensures that, even though the model is degenerate, it allows for proper a priori estimates on the solution: since $\zeta(\bu)\in H^1_0(\Omega)$, the super-linearity ensures that $\bu$ belongs to $L^2(\Omega)$ (at least). The sub-linearity of $\beta$ is assumed in \eqref{assum:beta} to make sure that $\beta(\bu)$ belongs to the same Lebesgue space as $\bu$, as that this non-linearity is continuous in this space; this is essential to pass to the limit in the numerical approximations.
\end{remark}

The theoretical study of \eqref{stefan:strong} is covered by the pioneering paper \cite{carrillo}, extended in \cite{andreianov2009well}, on problems also including a nonlinear convection term; using techniques that require to multiply the equation by various functions of the unknown, existence and uniqueness of an entropy solution are obtained.
An existence and uniqueness result for the simpler problem considered here is given by Theorem \ref{th:exist.uniq} in Appendix \ref{appen:exist.uniq}, without referring to entropy solutions. 

\medskip

The case $\zeta = {\rm Id}$ fits into quasilinear second-order elliptic problems, the approximation of which is covered in a rather large literature, see e.g. \cite{milner1985mixed,chen1998expanded,ZengYu,bi2014global}.
The case $\zeta \neq {\rm Id}$, on which we focus in this paper, raises severe issues and is less often considered in the literature, especially when considering the question of high-order schemes.
First, for such a problem, the solution can display discontinuities when $\zeta$ has plateaux. Moreover, the nonlinearities challenge the design of numerical methods that simultaneously 
\begin{inparaenum}[(i)]
\item only require computing integrals of polynomials (integrals that can be exactly computed in general),
\item are amenable to error estimates (or, at the very least, proven to be convergent), and
\item are of order higher than 1.
\end{inparaenum}

\medskip

Extending the entropy method used in \cite{carrillo} to the notion of entropy process solutions, the convergence of a Two-Point Flux Approximation (TPFA) Finite Volume method is proved in \cite{convpardeg} for a time-dependent version of \eqref{stefan:strong} with $\Lambda={\rm Id}$. The entropy method requires us to consider $\phi(u)$ as test functions for various nonlinear functions $\phi$, a process that can only be reproduced at the discrete level for the TPFA scheme, see \cite[Section 7]{review} and \cite{TPnotTP}. Unfortunately, the TPFA scheme is only applicable on very specific grids, which usually forces $\Lambda = {\rm Id}$, and only low-order error estimates can be expected from the application of the doubling variable technique as in  \cite{eymard2002error}.

\medskip

In the general case of an anisotropic heterogeneous field $\Lambda$, we need to consider more versatile schemes than the TPFA scheme, which will necessarily reduce the range of admissible test functions. Nevertheless, an important feature to preserve, if one wants to ensure the stability of the discretisation, is the capacity to choose appropriate test functions to simultaneously get diffusion estimates from the gradient terms, and a positive sign from the reaction term. 

\medskip

Let us first consider the case of conforming Galerkin methods. Given a subspace $V_h$ of $H^1_0(\Omega)$, a conforming scheme for \eqref{stefan:weak} is written
\begin{equation}\label{stefan:cs.intro}
\mbox{Find $u\in V_h$ such that:}\ \int_\Omega \beta(u) v +\int_\Omega \Lambda\nabla \zeta(u) \cdot\nabla v=\int_\Omega f v- \int_\Omega F\cdot\nabla v\,,\quad\forall v\in V_h.
\end{equation}
If $u\in V_h$ and $\zeta$ is globally Lipschitz continuous, we have $\zeta(u)\in H^1_0(\Omega)$ and the key of the convergence analysis is that the chain rule $\nabla\zeta(u)=\zeta'(u)\nabla u$ enables us to take $v=u\in V_h$ as test function in the scheme. This choice creates from the diffusion term the quantity of interest $\zeta'(u)|\nabla u|^2$, while the reaction term is non-negative since $\beta(u)u\ge 0$. However, to deduce any sort of estimate from this choice of test function, we are forced to set $F=0$, since the term $\int_\Omega F\cdot\nabla u$ cannot in general be estimated using $\int_\Omega \zeta'(u) |\nabla u|^2$. A better choice of test function to estimate the term resulting from the presence of $F$ would be $v=\zeta(u)$ since the diffusion term would provide the quantity $\int_\Omega |\nabla\zeta(u)|^2$, which can be used to estimate $\int_\Omega F\cdot\nabla \zeta(u)$. However, $v=\zeta(u)$ is not a valid test function in the scheme since it does not belong to $V_h$ in general. Fixing $F=0$, the convergence of \eqref{stefan:cs.intro} can nonetheless be proved, but no error estimate can be derived --- the reason for this being, again, the lack of freedom in choosing suitable test functions in the scheme. The analysis of conforming approximations is sketched in Appendix \ref{sec:conforming} (in which \eqref{stefan:weak} is first recast before applying the Galerkin method).

\medskip

Coming back to the general case of \eqref{stefan:weak} with possibly $F\neq 0$, we consider numerical methods for which the chain rule does not hold at the discrete level (as is the case for the majority of non-conforming methods). Unless the model problem is recast with a different form of nonlinearity as in \cite{cances-guichard,CNV}, the only reasonable test function to consider in order to get estimates is $v=\zeta(u)$, which formally provides $|\nabla\zeta(u)|^2$ from the diffusion term. More precisely, let us consider a scheme where the discrete unknowns $z=(z_i)_{i\in I}$ represent pointwise values of the solutions at certain nodes, and functions $z_h$ are reconstructed from these values and used in the weak formulation (this is the choice made in \cite{eliott1987error,amiez1992error} in the case of the transient problem, through the use of ``Lagrange interpolation operators''). Then, for $u=(u_i)_{i\in I}$, one can easily define $v=\zeta(u)$ pointwise, setting $v_i=\zeta(u_i)$ for all $i\in I$. The weak  formulation then involves $\nabla (\zeta(u))_h\cdot\nabla v_h$ and, taking $v=\zeta(u)$, this diffusion term generates the quantity $|\nabla (\zeta(u))_h|^2$. 

With this choice of $v$, the reaction term creates the quantity $\beta(u_h)(\zeta(u))_h$. This function is, at the considered nodes, equal to $\beta(u_i)\zeta(u_i)\ge 0$ (see \eqref{assum:zeta}--\eqref{assum:beta}). However, outside the nodes, no particular sign can be ensured for $\beta(u_h)(\zeta(u))_h$ and it is not clear that this reaction term will indeed lead to proper estimates on the solution to the scheme.

The way to solve this conundrum is, in the reaction term, to use a different reconstruction of functions than the natural reconstruction $(\cdot)_h$ used for the diffusion term. Utilising, for example, a piecewise constant reconstruction, in which the only values taken by the reconstruction are nodal values, ensures that the positivity of the reaction term --- valid at these nodal values --- extends to the entire domain (this is again done for low-order methods in \cite{eliott1987error,amiez1992error} to handle the accumulation terms issued from the time derivative, and in \cite{elliott1981fin} on a variational inequality equivalent to \eqref{stefan:strong} with $\zeta=\rm{Id}$ and $\beta$ multi-valued). For linear models, using piecewise constant reconstructions for reaction/accumulation terms leads to what is called mass-lumped schemes. There is a large literature on the mass-lumping of Finite Element methods for second order problems, see e.g.~\cite{CJRT,GMV,RW17,JS} and references therein. In most of these references, though, the construction of mass-lumped versions of high-order methods is justified by a need to reduce computational costs: for explicit discretisations of time-dependent linear problems, a mass-lumped scheme ensures a diagonal mass matrix which, unlike the standard mass matrix, is trivial to invert at each time step. This property of diagonal mass matrix has also been heavily used in schemes for eigenvalues problems related to linear elliptic operators (see for example \cite{andreev1992lumped} and references therein). On the contrary, for a nonlinear degenerate model as \eqref{stefan:strong}, as explained above the mass-lumping is not just a way to improve the method's efficiency, but appears as an imperative to establish convergence and error estimates --- and thus rigorously ensure that the scheme has high-order approximation properties. Additionally, the usual interpretation of mass-lumping as a specific choice of quadrature rules for the mass matrix is meaningful mostly in the linear setting. For nonlinear models, the less standard interpretation based on piecewise constant reconstructions is more appropriate (even though, as we will see, there is still some link to exploit with local quadrature rules). Finally, let us notice that, to our best knowledge, mass-lumping techniques seem to only be considered in the literature on Finite Element methods, not in the literature covering other high-order polynomial-based methods such as Discontinuous Galerkin. This is understandable when the goal is to simplify the inversion of the mass matrix; mass-lumping is then not much useful to methods such as Discontinuous Galerkin schemes, for which the standard mass matrix is easy to invert due to its block diagonal structure (which can also easily be made fully diagonal by a simple choice of orthogonal local polynomial basis). However, when the primary objective of mass-lumping is to  enable convergence and error estimates for nonlinear models, the question of designing mass-lumped Discontinuous Galerkin (or other methods based on local polynomials) is fully relevant.

\medskip

Our goal in this paper is to design high-order mass-lumped schemes for the nonlinear degenerate model \eqref{stefan:strong}. Our main contributions can be summarised as follows:
\begin{itemize}[\hspace*{.4em}$\bullet$]
\item design of a general analysis framework that treats in a unified way many different methods, including Finite Elements and Discontinuous Galerkin methods (and others);
\item proof of error estimates in this general framework;
\item identification of conditions on the mass-lumping to ensure high-order convergences (when the exact solution is piecewise smooth), despite the nonlinearities and degeneracy in the model;
\item extensive numerical tests, using both $\Poly{k}$ Finite Elements and Discontinuous Galerkin schemes, on realistic test cases (porous medium, Stefan) to validate the theoretical analysis.
\end{itemize}

\medskip

Let us describe the organisation of this paper. We first provide in Section \ref{sec:gs} a general formulation of numerical schemes, based on schemes written in fully discrete form: approximate functions and gradients are reconstructed without direct relation, and the approximate functions are piecewise constant. This construction is performed in the Gradient Discretisation Method \cite{gdm}, a framework that provides efficient notations and notions for the design and analysis of such schemes. After proving a first convergence result (Theorem \ref{th:convergence.scheme}) in Section \ref{sec:cvgs}, we establish in Section \ref{sec:errest} error estimates on the approximation of $\zeta(\bu)$ when using mass-lumped schemes (Theorem \ref{th:error.est} and Corollary \ref{cor:rates}).  As demonstrated in Section \ref{sec:GS.quad}, this general error estimate yields a high-order convergence rate (Theorem \ref{th:high.order}) for piecewise smooth solutions to \eqref{stefan:weak}, provided the mass-lumping is performed in a way that corresponds to sufficiently high-order local quadrature rules. These conditions on the local quadrature rules are similar to those highlighted for Finite Elements in \cite{ciarlet,CJRT} but, interestingly, they appear here from the need of estimating quite different error terms than in the case of linear models as in these references.
Extensive numerical tests are presented in Section \ref{sec:num}, both on porous medium equations and on Stefan problems, using mass-lumped Finite Element and Discontinuous Galerkin schemes; the results confirm that high-order approximations are obtained only if the aforementioned local quadrature rules hold, even if the theoretical assumptions are not fully satisfied (e.g.\ the solution is not piecewise smooth). The paper is completed with a short conclusion (Section \ref{sec:conclusion}) and two appendices. In Appendix \ref{appen:exist.uniq}, the properties of the continuous problem are analysed, and Appendix \ref{sec:conforming} sketches the study of the conforming scheme \eqref{stefan:cs.intro} with $F=0$, and highlights its limitations compared to the method in Section \ref{sec:gs}: strong convergence of the gradients only under some regularity assumption on the continuous solution, no error estimate, no uniqueness of the discrete solution.
% 
% Besides the appendix \ref{appen:exist.uniq} already mentioned, we briefly discuss in Appendix \ref{sec:semilinear} the case of the semilinear equation corresponding to $\zeta={\rm Id}$ and $F=0$, that is, $\beta(\bu)-\div(\Lambda\nabla \bu)=f$. We show that this model, already studied for example in \cite{ZengYu}, is actually much easier than \eqref{stefan:strong}, does not require any mass-lumping, and is completely amenable to high-order approximations.

\section{Schemes with piecewise constant approximation}\label{sec:gs}

To present the discretisation of \eqref{stefan:strong}, we use the Gradient Discretisation Method (GDM) \cite{gdm}, a generic numerical analysis framework for diffusion equations that encompasses many different discretisations: Finite Element, Finite Volumes, etc. Using this framework enables a unified treatment of all these different schemes, and also gives an efficient setting and tools to deal with them, including the notion of mass-lumping that will be essential to design a scheme for which an error estimate can be established.

The principle of the GDM is to introduce discrete elements --- a finite dimensional space, an operator that reconstructs functions, and an operator that reconstructs gradients --- together called a Gradient Discretisation (GD), and to replace the continuous counterparts in the weak formulation \eqref{stefan:weak} with these discrete elements, leading to a Gradient Scheme (GS) for \eqref{stefan:strong}.

\begin{definition}[Gradient Discretisation]\label{def:gd}
A Gradient Discretisation is $\disc=(\XDz,\PiD,\nablaD,\QD)$ such that
\begin{enumerate}[\hspace*{.4em}$\bullet$]
 \item $\XDz$ a finite-dimensional space.
 \item $\PiD:\XDz\to L^2(\Omega)$ and $\nablaD:\XDz\to L^2(\Omega)^d$ are linear operators reconstructing, respectively, a function and a gradient; $\nablaD$ must be chosen such
that $\norm[\disc]{{\cdot}}:=\norm[L^2]{\nablaD {\cdot}}$ is a norm on $\XDz$.
 \item $\QD:L^2(\Omega)\to L^2(\Omega)$ is a quadrature operator.
\end{enumerate}
\end{definition}

\begin{remark}[Quadrature operator]\label{rem:QD}
Quadrature rules for source terms are usually not accounted for in the definition and analysis of Gradient Schemes. In the context of mass-lumped schemes, however, accounting for quadrature rules is essential to establishing optimal high-order error estimates.

Note that $\QD$ is not assumed to be bounded. This enables different choices of quadrature rules depending on the regularity of the considered functions: $\QD f$ could be computed using pointwise values of $f$ if $f$ is continuous, or using averaged values of $f$ otherwise.
\end{remark}

To deal with the nonlinearity in the derivatives in \eqref{stefan:strong} we need the following notion.

\begin{definition}[Piecewise constant reconstruction]\label{def:pw}
Let $\disc$ be a Gradient Discretisation such that, for some finite sets $I$ and $I_{\partial\Omega}\subset I$, it holds
\[
\XDz=\{v=(v_i)_{i\in I}\,:\, v_i\in\R\quad\forall i\in I\,,\;v_i=0\quad\forall i\in I_{\partial\Omega}\}.
\]
We say that the reconstruction $\PiD$ is piecewise constant if there exists a partition $\pwpart=(\pwpart_i)_{i\in I}$ of $\Omega$ (some of the $\pwpart_i$ can be empty) such that
\begin{equation}\label{def:PiD.ML}
\forall v=(v_i)_{i\in I}\in\XDz\,,\quad \PiD v=\sum_{i\in I}v_i\mathbf{1}_{\pwpart_i},
\end{equation}
where $\mathbf{1}_{\pwpart_i}$ is the characteristic function of $\pwpart_i$. In other words, $(\PiD v)_{|\pwpart_i}=v_i$ for all $i\in I$.
\end{definition}

\begin{remark}[Reconstruction operator]
Note that if some $\pwpart_i$ are empty, then $\PiD$ is not injective. This is a classical situation in the GDM, see e.g. for example the mass-lumped $\Poly{2}$ scheme in Remark \ref{rem:P2ml}, or the HMM method in Remark \ref{rem:u.not.unique} and \cite[Chapter 13]{gdm}.
\end{remark}

In the setting of this definition, if $g:\R\to\R$ is a function satisfying $g(0)=0$, we define (with an abuse of notation) $g:\XDz\to\XDz$ by applying $g$ coefficient by coefficient:
\begin{equation}\label{eq:nonlin.function}
\forall v=(v_i)_{i\in I}\,,\; g(v)=(g(v_i))_{i\in I}.
\end{equation}
We note that this definition actually depends on the choice of the basis of $\XDz$. In practice, this basis being canonical and chosen once and for all, we do not make explicit the dependency of $g(v)$ with respect to it.
If $\PiD$ is a piecewise constant reconstruction, then  \eqref{eq:nonlin.function} leads to
\begin{equation}\label{eq:commut}
\forall v\in \XDz\,,\; \PiD g(v)=g(\PiD v).
\end{equation}

The accuracy properties of a GD are assessed through the following quantities. The first one measures a discrete Poincar\'e constant of $\disc$, the second one is an interpolation error, whilst the last one measures the conformity defect of the method (how well a discrete divergence formula holds).
\begin{align}
&C_{\disc}:=\max_{v\in\XDz\backslash\{0\}}\frac{\norm[L^2]{\PiD v}}{\norm[\disc]{v}},\label{def:CD}\\
&\forall\varphi\in H^1_0(\Omega)\,,\quad S_{\disc}(\varphi)=\min_{v\in\XDz}\left(\norm[L^2]{\nablaD v-\nabla \varphi}
+\norm[L^2]{\PiD v-\varphi}\right),\label{def:SD}\\
&\forall\vec{\psi}\in H_\div(\Omega)\,,\;W_{\disc}(\vec{\psi}):=\max_{v\in\XDz\backslash\{0\}}\frac{1}{\norm[{\disc}]{v}}\displaystyle\left|\int_\Omega \nablaD v\cdot\vec{\psi}+\PiD v\div\vec{\psi}\right|.
\label{def:WD}
\end{align}
In the following, unless otherwise specified, the notation $a\lesssim b$ means that $a\le Cb$ with $C>0$ depending only on the data in Assumption \eqref{eq:assum} and on an upper bound of $C_{\disc}$.

Given a Gradient Discretisation $\disc=(\XDz,\PiD,\nablaD,\QD)$ with piecewise constant reconstruction  as in Definition \ref{def:pw}, the Gradient Scheme for \eqref{stefan:strong} is (compare with the weak formulation \eqref{stefan:weak}):
\begin{equation}\label{stefan:gs}
\begin{aligned}
&\mbox{Find $u\in \XDz$ such that}\\
&\int_\Omega \beta(\PiD u) \PiD v +\int_\Omega \Lambda\nablaD\zeta(u) \cdot\nablaD v=\int_\Omega \QD f\,\PiD v
-\int_\Omega F\cdot\nablaD v\,,\quad\forall v\in \XDz.
\end{aligned}
\end{equation}

\begin{remark}[Quadrature for $\div(F)$]
A quadrature operator could also be introduced for $F$ (for example, considering $\QD$ component-wise, or selecting a different quadrature operator more appropriate to the structure of the gradient reconstruction). For simplicity of the presentation we decide not to include it in the analysis.
\end{remark}

\subsection{Convergence analysis}\label{sec:cvgs}

We first prove an \emph{a priori} estimate on the solution to the Gradient Scheme. This estimate is used to prove the existence of this solution, its convergence, and the error estimate \eqref{eq:error.est.new}.

\begin{lemma}[Bounds on the solution to the GS]\label{lem:estim.disc}
Let $\disc$ be a GD with piecewise constant reconstruction as in Definition \ref{def:pw}, and let $u\in \XDz$ be a solution to the Gradient Scheme \eqref{stefan:gs}. Then
\begin{equation}
 \norm[L^2]{\PiD u}+\norm[L^2]{\PiD\beta(u)} +\norm[\disc]{\zeta(u)}\lesssim \norm[L^2]{\QD f}+\norm[L^2]{F}+1. 
\label{eq:estim}
\end{equation}
\end{lemma}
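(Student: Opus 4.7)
The natural test function to use in \eqref{stefan:gs} is $v = \zeta(u)$, which is well-defined as an element of $\XDz$ via the coefficient-wise convention \eqref{eq:nonlin.function}. This choice is precisely the one motivated in the introduction: the diffusion term will yield $\int_\Omega \Lambda\nablaD\zeta(u)\cdot\nablaD\zeta(u)$, which combined with the uniform ellipticity \eqref{assum:lambda} controls $\underline{\lambda}\,\norm[\disc]{\zeta(u)}^2$; meanwhile the crucial identity \eqref{eq:commut} guarantees that $\PiD\zeta(u) = \zeta(\PiD u)$, so the reaction term becomes $\int_\Omega \beta(\PiD u)\,\zeta(\PiD u)$, which is pointwise non-negative because both $\beta$ and $\zeta$ are non-decreasing with $\beta(0)=\zeta(0)=0$ (so they share the same sign at each point). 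This is exactly the mechanism that the piecewise constant reconstruction is designed to enable.

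With those two observations in hand, the right-hand side of \eqref{stefan:gs} with test function $\zeta(u)$ can be bounded by Cauchy--Schwarz: the source term by $\norm[L^2]{\QD f}\,\norm[L^2]{\PiD\zeta(u)} \le C_{\disc}\norm[L^2]{\QD f}\,\norm[\disc]{\zeta(u)}$ using the definition \eqref{def:CD} of $C_\disc$, and the $F$-term by $\norm[L^2]{F}\,\norm[\disc]{\zeta(u)}$. Discarding the non-negative reaction term and applying Young's inequality then absorbs the $\norm[\disc]{\zeta(u)}$ factor on the left and yields
\[
\norm[\disc]{\zeta(u)} \lesssim \norm[L^2]{\QD f} + \norm[L^2]{F}.
\]

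To recover the bound on $\norm[L^2]{\PiD u}$, I would invoke the super-linearity assumption \eqref{assum:zeta}: pointwise we have $M_0|\PiD u| \le |\zeta(\PiD u)| + M_1 = |\PiD\zeta(u)| + M_1$, so taking $L^2$-norms and using \eqref{def:CD} one more time gives $\norm[L^2]{\PiD u} \lesssim \norm[\disc]{\zeta(u)} + 1$. Finally, the sub-linearity \eqref{assum:beta} combined with \eqref{eq:commut} (applied to $\beta$) yields $\norm[L^2]{\PiD\beta(u)} = \norm[L^2]{\beta(\PiD u)} \lesssim \norm[L^2]{\PiD u} + 1$. Chaining these three estimates gives \eqref{eq:estim}.

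Conceptually the main point (rather than a serious obstacle) is the delicate step where positivity of $\beta(\PiD u)\PiD\zeta(u)$ is used: without the piecewise constant structure, one could not swap $\PiD$ and $\zeta$ and there would be no sign information for the reaction term. The remaining manipulations are routine applications of Cauchy--Schwarz, Young's inequality, and the growth conditions on $\zeta$ and $\beta$; no smoothness of $\zeta$ or $\beta$, and in particular no chain rule at the discrete level, is needed.
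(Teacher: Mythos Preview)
Your proof is correct and follows essentially the same route as the paper: test with $v=\zeta(u)$, use \eqref{eq:commut} to obtain the non-negative reaction term $\beta(\PiD u)\zeta(\PiD u)$, bound the right-hand side via Cauchy--Schwarz and $C_\disc$, and then recover the bounds on $\PiD u$ and $\PiD\beta(u)$ from the growth assumptions \eqref{assum:zeta}--\eqref{assum:beta}. The only cosmetic difference is that the paper simplifies by dividing through by $\norm[L^2]{\nablaD\zeta(u)}$ rather than invoking Young's inequality.
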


\begin{proof}
Letting $v=\zeta(u)$ in \eqref{stefan:gs} we get
\[
 \int_\Omega \beta(\PiD u)  \zeta(\PiD u) +\int_\Omega \Lambda\nablaD\zeta(u) \cdot\nablaD\zeta(u) =\int_\Omega \QD f\,\PiD \zeta(u)
-\int_\Omega F\cdot\nablaD\zeta(u),
\]
where we have used \eqref{eq:commut} to write $\PiD \zeta(u)=\zeta(\PiD u)$ in the first integral term.
By monotonicity of $\beta,\zeta$ and $\beta(0)=\zeta(0)=0$, we have $\beta(s)\zeta(s)\ge 0$ and the equation above thus gives, by definition of $C_{\disc}$ and Assumption \eqref{assum:lambda},
\[
\begin{aligned}
\underline{\lambda}\norm[L^2]{\nablaD\zeta(u)}^2\le{}& \norm[L^2]{\QD f}\norm[L^2]{\PiD\zeta(u)}
+ \norm[L^2]{F}\norm[L^2]{\nablaD\zeta(u)}\\
\le{}& \left(C_{\disc}\norm[L^2]{\QD f}+\norm[L^2]{F}\right)\norm[L^2]{\nablaD\zeta(u)}.
\end{aligned}
\]
Recalling that $\norm[\disc]{\zeta(u)}=\norm[L^2]{\nablaD\zeta(u)}$, this estimate yields the bound on $\zeta(u)$ in \eqref{eq:estim}. Using again the definition of $C_{\disc}$, we infer that $\norm[L^2]{\PiD\zeta(u)}\lesssim \norm[L^2]{\QD f}+\norm[L^2]{F}$. By \eqref{eq:commut} this gives an $L^2(\Omega)$-estimate on $\zeta(\PiD u)$ and, using Assumption \eqref{assum:zeta}, translates into the bound on $\PiD u$ in \eqref{eq:estim}. The estimate on $\beta(\PiD u)$ follows from the sub-linearity of $\beta$ stated in Assumption \eqref{assum:beta}.
\end{proof}

\begin{lemma}[Existence and uniqueness for the GS]\label{lem:exuniq}
Assume \eqref{eq:assum} and let $\disc$ be a Gradient Discretisation with a piecewise constant reconstruction as in Definition \ref{def:pw}. Then there exists a solution to the Gradient Scheme \eqref{stefan:gs} and, if $(u_1,u_2)$ are two solutions to this scheme, then $\zeta(u_1)=\zeta(u_2)$ and $\PiD u_1=\PiD u_2$.
\end{lemma}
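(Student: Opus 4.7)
The proof splits naturally into uniqueness (of $\zeta(u)$ and $\PiD u$) and existence.

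\textbf{Uniqueness.} The plan is to subtract the scheme for $u_1$ and $u_2$ and test with a suitable element of $\XDz$. Subtracting gives, for all $v\in\XDz$,
\begin{equation*}
\int_\Omega(\beta(\PiD u_1)-\beta(\PiD u_2))\PiD v+\int_\Omega \Lambda \nablaD(\zeta(u_1)-\zeta(u_2))\cdot\nablaD v=0.
\end{equation*}
The first step is to take $v=\zeta(u_1)-\zeta(u_2)\in\XDz$ (using $\zeta(0)=0$ so that boundary values vanish). Using the commutation \eqref{eq:commut} and linearity of $\PiD$, the first integral becomes $\int_\Omega(\beta(\PiD u_1)-\beta(\PiD u_2))(\zeta(\PiD u_1)-\zeta(\PiD u_2))$, which is non-negative by monotonicity of $\beta$ and $\zeta$. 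The second integral is bounded below by $\underline{\lambda}\norm[L^2]{\nablaD(\zeta(u_1)-\zeta(u_2))}^2$. Both terms must vanish, so $\norm[\disc]{\zeta(u_1)-\zeta(u_2)}=0$ and, as $\norm[\disc]{{\cdot}}$ is a norm on $\XDz$, this yields $\zeta(u_1)=\zeta(u_2)$ in $\XDz$.

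Next, to recover $\PiD u_1=\PiD u_2$, the subtracted equation (with $\nablaD(\zeta(u_1)-\zeta(u_2))=0$ already established) reduces to $\int_\Omega(\beta(\PiD u_1)-\beta(\PiD u_2))\PiD v=0$ for all $v\in\XDz$. The key step is to choose $v=\beta(u_1)-\beta(u_2)\in\XDz$ (again using $\beta(0)=0$); the piecewise constant structure and \eqref{eq:commut} then give $\PiD v=\beta(\PiD u_1)-\beta(\PiD u_2)$, and the test produces $\int_\Omega(\beta(\PiD u_1)-\beta(\PiD u_2))^2=0$, hence $\beta(\PiD u_1)=\beta(\PiD u_2)$ a.e. Applying $\PiD$ to the already-established $\zeta(u_1)=\zeta(u_2)$ additionally gives $\zeta(\PiD u_1)=\zeta(\PiD u_2)$ a.e. Adding these two identities produces $(\beta+\zeta)(\PiD u_1)=(\beta+\zeta)(\PiD u_2)$ a.e., and the strict monotonicity assumption \eqref{assum:incr} (which makes $\beta+\zeta$ injective) yields $\PiD u_1=\PiD u_2$.

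\textbf{Existence.} The plan is a topological degree argument on the finite-dimensional space $\XDz$. Fixing a basis and an inner product, identify $\XDz$ with $\R^N$ and define a continuous map $\Psi:[0,1]\times\XDz\to\XDz$ by requiring that, for $(s,u)\in[0,1]\times\XDz$ and all $v\in\XDz$,
\begin{equation*}
(\Psi(s,u),v)=\int_\Omega\beta(\PiD u)\PiD v+\int_\Omega\Lambda\nablaD\zeta(u)\cdot\nablaD v-s\left(\int_\Omega\QD f\,\PiD v-\int_\Omega F\cdot\nablaD v\right).
\end{equation*}
Solutions to \eqref{stefan:gs} are precisely zeros of $\Psi(1,\cdot)$, and $\Psi(0,\cdot)$ has $u=0$ as an obvious zero. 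The crucial ingredient is that the a priori estimate of Lemma \ref{lem:estim.disc} applies uniformly in $s\in[0,1]$ (just replace $f,F$ by $sf,sF$), producing a radius $R>0$ independent of $s$ such that every zero of $\Psi(s,\cdot)$ lies in the open ball $B_R\subset\XDz$. Hence the Brouwer degree $\deg(\Psi(s,\cdot),B_R,0)$ is well-defined and independent of $s$. To compute it at $s=0$, one further homotopes $\Psi(0,\cdot)$ to the identity by interpolating $u\mapsto tu+(1-t)\Psi(0,u)$; testing against $\zeta(u)$ (resp.\ $u$ itself for the identity part) and using $\beta(s)\zeta(s)\ge 0$ shows that zeros remain in $B_R$ throughout, so the degree equals $\deg(\mathrm{Id},B_R,0)=1\neq 0$. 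Therefore $\Psi(1,\cdot)$ has a zero in $B_R$, which is the sought solution of \eqref{stefan:gs}.

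The main obstacle is the existence part: the nonlinearity $\zeta$ may be degenerate (flat), so the operator is not coercive in a direct way, and one cannot simply apply a monotone-operator surjectivity theorem. The topological degree / homotopy argument circumvents this by exploiting only the uniform a priori bound of Lemma \ref{lem:estim.disc}; the uniqueness part, by contrast, is essentially a clean algebraic manipulation once the correct test functions ($\zeta(u_1)-\zeta(u_2)$ and then $\beta(u_1)-\beta(u_2)$) are identified and the commutation property \eqref{eq:commut} of the piecewise constant reconstruction is exploited.
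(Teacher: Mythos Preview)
Your uniqueness argument is correct and essentially identical to the paper's: subtract, test with $\zeta(u_1)-\zeta(u_2)$ to get $\zeta(u_1)=\zeta(u_2)$, then test with $\beta(u_1)-\beta(u_2)$ to get $\beta(\PiD u_1)=\beta(\PiD u_2)$, and conclude via the strict monotonicity of $\beta+\zeta$.

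For existence you also use the topological degree, but your homotopy differs from the paper's and your second step has a gap. You scale the sources ($f,F\mapsto sf,sF$), which is fine: Lemma~\ref{lem:estim.disc} indeed gives a bound on $\norm[\disc]{\zeta(u)}$ uniform in $s$, and the super-linearity of $\zeta$ together with norm equivalence on $\XDz$ then bounds $u$. The problem is your computation of $\deg(\Psi(0,\cdot),B_R,0)$ via the convex homotopy $H(t,u)=tu+(1-t)\Psi(0,u)$. The justification ``testing against $\zeta(u)$ (resp.\ $u$ itself for the identity part)'' does not make sense as stated: $H(t,u)=0$ is a single equation and must be paired with a single test vector. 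Testing with $v=\zeta(u)$ produces the term $t(u,\zeta(u))$, whose sign is not controlled unless you have specified the inner product to be the componentwise one induced by the canonical basis (so that $(u,\zeta(u))=\sum_i u_i\zeta(u_i)\ge 0$). Even then, this test only yields $\zeta(u)=0$; you must then test a second time with $v=u$ (now using $\nablaD\zeta(u)=0$) to obtain $t\|u\|^2+(1-t)\int_\Omega\beta(\PiD u)\PiD u=0$ and hence $u=0$ for $t>0$. None of this is in your write-up.

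The paper avoids this difficulty by choosing a different homotopy: it interpolates the nonlinearity itself, setting $\zeta_a(s)=a\zeta(s)+(1-a)s$ and scaling $\beta,f,F$ by $a$. The a~priori bound is obtained by testing with $\zeta_a(u)$ (which still satisfies the super-linearity \eqref{assum:zeta} uniformly in $a$), and at $a=0$ the map $\mathfrak{F}(0,\cdot)$ is \emph{linear} (it corresponds to $u\mapsto -\div(\Lambda\nablaD u)$) with trivial kernel, so its degree is nonzero without any further homotopy. This is cleaner than your two-stage construction; in particular it requires no special choice of inner product.
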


\begin{remark}[Counter-example to $u_1=u_2$]\label{rem:u.not.unique}
In general, we cannot claim that $u_1=u_2$, as the following counter-example shows. Consider $\beta(s)=s$ and $\zeta:\R\to\R$  such that $\zeta(s)=0$ for all $s\in [0,1]$. Take $F=0$ and $f\in L^2(\Omega)$ such that $0\le f\le 1$ almost everywhere, and consider an HMM Gradient Scheme \cite[Chapter 13]{gdm} on a polytopal mesh of $\Omega$ (which assumes that $\Omega$ is polytopal). Denoting by $\mesh$ and $\faces$, respectively, the sets of cells and faces of this mesh, the corresponding Gradient Discretisation satisfies
\[
\XDz=\{v=((v_K)_{K\in\mesh},(v_\sigma)_{\sigma\in\faces})\,:\,v_K\in\R\,,\;v_\sigma\in\R\,,\;v_\sigma=0\mbox{ if }\sigma\subset \partial\Omega\}
\]
and $(\PiD v)_{|K}=v_K$ for all $K\in\mesh$. We select $\QD = {\rm Id}$, and the precise expression of $\nablaD v$ is irrelevant to our counter-example. Then any $u=((u_K)_{K\in\mesh},(u_\sigma)_{\sigma\in\faces})\in\XDz$ that satisfies
\begin{equation}\label{def:u.sol.nonunique}
u_K=\frac{1}{|K|}\int_K f\,\quad\forall K\in\mesh\,,\quad u_\sigma\in [0,1]\quad\forall \sigma\in\faces\,,\quad u_\sigma=0\mbox{ if $\sigma\subset\partial\Omega$}
\end{equation}
is a solution to the Gradient Scheme \eqref{stefan:gs}. Indeed, all the components of such a vector belong to $[0,1]$ and thus $\zeta(u)=0$. The scheme equation on $u$ thus reduces to 
\[
\int_\Omega \PiD u \PiD v = \int_\Omega f\PiD v\,,\quad\forall v\in\XDz,
\]
which holds given the choice of the cell values $(u_K)_{K\in\mesh}$. Since there is an infinite number of $u$ satisfying \eqref{def:u.sol.nonunique} (as the values on internal faces are free in $[0,1]$), this establishes that, when considering the HMM scheme, uniqueness fails for \eqref{stefan:gs} with these $f$, $\beta$, $\zeta$.
\end{remark}

\begin{proof}[Proof of Lemma \ref{lem:exuniq}]
The existence is obtained via a topological degree argument; we refer the reader to \cite{deimling} for the definition and properties of this degree. Fix an arbitrary Euclidean structure, with inner product $\langle\cdot,\cdot\rangle$, on the finite dimensional space $\XDz$. For $a\in [0,1]$ let $\zeta_a(s)=a\zeta(u)+(1-a)u$. Define $\mathfrak F:[0,1]\times \XDz\to\XDz$ the following way: for $a\in [0,1]$ and $u\in \XDz$, $\mathfrak F(a,u)$ is the unique element of $\XDz$ such that, for all $v\in \XDz$,
\[
\langle \mathfrak F(a,u),v\rangle=\int_\Omega  a\beta(\PiD u) \PiD v +\int_\Omega \Lambda\nablaD\zeta_a(u) \cdot\nablaD v-\int_\Omega a \QD f\,\PiD v-\int_\Omega aF\cdot\nablaD v.
\]
We note that $u$ is a solution to the Gradient Scheme \eqref{stefan:gs} if and only if $\mathfrak F(1,u)=0$. 

By continuity of $\beta$ and $\zeta$, and the finite dimension of $\XDz$, the mapping $\mathfrak F$ is clearly continuous. Assume that $\mathfrak F(a,u)=0$ for some $a\in [0,1]$. The arguments in the proof of Lemma \ref{lem:estim.disc}, using $v=\zeta_a(u)$ as a test function, show that $\norm[\disc]{\zeta_a(u)}\le \ctel{cst1}$ with $\cter{cst1}$ not depending on $a$; by equivalence of norms on the finite dimensional space $\XDz$, this shows that $\norm[\infty]{\zeta_a(u)}\le \ctel{cst2}$
with $\cter{cst2}$ still independent on $a$ and $\norm[\infty]{{\cdot}}$ the supremum norm in $\XDz$ on an arbitrary basis. The mapping $\zeta_a$ satisfies \eqref{assum:zeta} with $M_0,M_1$ independent of $a$. As a consequence, the bound on $\norm[\infty]{\zeta_a(u)}$ shows that $\norm[\infty]{u}< R$ with $R$ independent of $a$.

Hence, any solution to $\mathfrak F(a,u)=0$ lies in the open ball $B_R$ of $\XDz$, centered at $0$ and of radius $R$ in the norm $\norm[\infty]{{\cdot}}$. This ball being independent of $a$, the topological degree theory ensures that ${\rm deg}(\mathfrak F(1,\cdot),B_R,0)={\rm deg}(\mathfrak F(0,\cdot),B_R,0)$. The mapping $\mathfrak F(0,\cdot):\XDz\to\XDz$ is linear and the estimate obtained on the solutions to $\mathfrak F(0,u)=0$ shows that $\mathfrak F(0,\cdot)$ has a trivial kernel, and is therefore invertible. This implies ${\rm deg}(\mathfrak F(0,\cdot),B_R,0)\neq 0$ and thus ${\rm deg}(\mathfrak F(1,\cdot),B_R,0)\neq 0$, which proves that the equation $\mathfrak F(1,u)=0$ has a solution $u\in B_R$.

\medskip

We now consider the uniqueness of the solution to the scheme. Subtracting the equations satisfied by $u_1$ and $u_2$ and taking $v=\zeta(u_1)-\zeta(u_2)\in \XDz$ as a test function, we have
\[
\int_\Omega (\beta(\PiD u_1)-\beta(\PiD u_2)) \PiD (\zeta(u_1)-\zeta(u_2)) 
+ \int_\Omega \Lambda\nablaD (\zeta(u_1)-\zeta(u_2))\cdot\nablaD (\zeta(u_1)-\zeta(u_2))=0.
\]
Property \eqref{eq:commut} and the monotonicity of $\beta$ and $\zeta$ show that 
\[
(\beta(\PiD u_1)-\beta(\PiD u_2)) \PiD (\zeta(u_1)-\zeta(u_2))=(\beta(\PiD u_1)-\beta(\PiD u_2)) (\zeta(\PiD u_1)-\zeta(\PiD u_2))\ge 0.
\]
Hence, $\norm[L^2]{\nablaD (\zeta(u_1)-\zeta(u_2))}=0$ which, by property of $\nablaD$, ensures that $\zeta(u_1)=\zeta(u_2)$.

We now come back to the equations satisfied by $u_1$ and $u_2$, subtract them and take $v=\beta(u_1)-\beta(u_2)\in \XDz$ as a test function to get
\[
\int_\Omega (\beta(\PiD u_1)-\beta(\PiD u_2))^2 + \int_\Omega \nablaD (\zeta(u_1)-\zeta(u_2))\cdot\nablaD (\beta(u_1)-\beta(u_2))=0.
\]
Since $\zeta(u_1)=\zeta(u_2)$, we infer that $\beta(\PiD u_1)-\beta(\PiD u_2)=0$. Owing to Hypothesis \eqref{assum:incr}, we conclude that $\PiD u_1=\PiD u_2$ from  $\beta(\PiD u_1)+\zeta(\PiD u_1)=\beta(\PiD u_2)+\zeta(\PiD u_2)$. \end{proof}

The next theorem is our first main convergence result. It states the strong convergence of the solution to the Gradient Scheme without assuming any regularity property on the continuous solution.

\begin{theorem}[Convergence of the scheme]\label{th:convergence.scheme}
Assume \eqref{eq:assum} and let $(\disc[m])_{m\in\N}$ be a sequence of Gradient Discretisations with piecewise constant reconstructions as in Definition \ref{def:pw}. Assume moreover that the following properties hold:
\begin{enumerate}[\hspace*{.4em}$\bullet$]
 \item \emph{(Coercivity)} The sequence $(C_{\disc[m]})_{m\in\N}$ is bounded, where $C_{\disc[m]}$ is defined by \eqref{def:CD} for $\disc=\disc[m]$.
 \item \emph{(Consistency)} Recalling the definition \eqref{def:SD}, there holds
 \begin{equation}
  \forall \varphi\in H^1_0(\Omega), \ \lim_{m\to\infty}S_{\disc[m]}(\varphi) = 0\quad\mbox{ and }\quad
  \lim_{m\to\infty} \norm[L^2]{\QD[m] f -f} = 0.\label{def:gdconsistency:pdisc}
 \end{equation}
 \item \emph{(Limit-conformity)} Recalling the definition \eqref{def:WD}, there holds
 \begin{equation}
  \forall\vec{\psi}\in H_\div(\Omega)\,,\;\lim_{m\to\infty}W_{\disc[m]}(\vec{\psi})=0.\label{def:gddonformity:pdisc}
 \end{equation}
 \item \emph{(Compactness)} For any $(v_m)_{m\in\N}$ such that $v_m\in \XDmz$ for all $m\in\N$ and $(\nablaDm v_m)_{m\in\N}$ is bounded in $L^2(\Omega)^d$, the set $\{\PiDm v_m\,:\, m\in\N\}$ is
 relatively compact in $L^2(\Omega)$.
\end{enumerate}
For any $m\in\N$, let $u_m$ be a solution of Scheme \eqref{stefan:gs}. Then there exists $\bu$ solution to \eqref{stefan:weak} such that, as $m\to\infty$, $\PiDm\zeta(u_m)\to \zeta(\bu)$ strongly in $L^2(\Omega)$, $\nablaDm\zeta(u_m)\to \nabla\zeta(\bu)$ strongly in $L^2(\Omega)^d$, and $\PiDm\beta(u_m)\to \beta(\bu)$ weakly in $L^2(\Omega)$.  
\end{theorem}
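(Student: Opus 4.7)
The plan is the standard compactness-monotonicity strategy for Gradient Schemes, organised in four stages.

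\emph{Extraction of limits.} The a priori bound of Lemma \ref{lem:estim.disc}, together with $\norm[L^2]{\QD[m] f}$ being bounded (from consistency), provides uniform control of $\norm[L^2]{\PiDm u_m}$, $\norm[L^2]{\PiDm \beta(u_m)}$ and $\norm[L^2]{\nablaDm \zeta(u_m)}$. Extract a subsequence (still indexed by $m$) such that $\PiDm u_m\rightharpoonup \bar u$ and $\PiDm \beta(u_m)\rightharpoonup \bar\beta$ weakly in $L^2(\Omega)$, and $\nablaDm \zeta(u_m)\rightharpoonup \mathbf G$ weakly in $L^2(\Omega)^d$. The compactness assumption applied to the $\norm[\disc[m]]{\cdot}$-bounded sequence $\zeta(u_m)\in\XDmz$ upgrades $\PiDm \zeta(u_m)\to\bar\zeta$ to strong convergence in $L^2(\Omega)$. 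Testing the limit-conformity defect \eqref{def:WD} against arbitrary $\vec\psi\in H_\div(\Omega)$ and letting $m\to\infty$ yields $\int_\Omega(\mathbf G\cdot\vec\psi+\bar\zeta\,\div\vec\psi)=0$, which identifies $\bar\zeta\in H^1_0(\Omega)$ with $\mathbf G=\nabla\bar\zeta$.

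\emph{Passing to the limit in the scheme.} For any $\varphi\in H^1_0(\Omega)$, consistency provides $v_m\in\XDmz$ with $\PiDm v_m\to\varphi$ and $\nablaDm v_m\to\nabla\varphi$ in $L^2$. Every term of \eqref{stefan:gs} tested against $v_m$ is then a weak-strong pairing, and using also $\QD[m]f\to f$ strongly we obtain at the limit
\begin{equation*}
\int_\Omega\bar\beta\,\varphi+\int_\Omega\Lambda\nabla\bar\zeta\cdot\nabla\varphi=\int_\Omega f\varphi-\int_\Omega F\cdot\nabla\varphi\qquad\forall\varphi\in H^1_0(\Omega).
\end{equation*}

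\emph{Identifying the nonlinearities via Minty.} Test \eqref{stefan:gs} with $v=\zeta(u_m)$ (using \eqref{eq:commut} in the reaction term). The reaction term converges to $\int\bar\beta\bar\zeta$ (weak convergence of $\beta(\PiDm u_m)$ against strong convergence of $\zeta(\PiDm u_m)=\PiDm\zeta(u_m)$), and the right-hand side converges to $\int f\bar\zeta-\int F\cdot\nabla\bar\zeta$, so $\int_\Omega\Lambda|\nablaDm\zeta(u_m)|^2$ converges. Comparison with the limit equation at $\varphi=\bar\zeta$ identifies this limit as $\int\Lambda|\nabla\bar\zeta|^2$; combined with the weak convergence of the gradients and the uniform ellipticity \eqref{assum:lambda}, this upgrades $\nablaDm\zeta(u_m)\to\nabla\bar\zeta$ to strong convergence in $L^2(\Omega)^d$. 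For any bounded measurable $w$, monotonicity of $\beta$ and $\zeta$ gives the non-negative integrand $(\beta(\PiDm u_m)-\beta(w))(\zeta(\PiDm u_m)-\zeta(w))\ge 0$; expanding and passing to the limit (weak-strong for cross-terms, the just-established scalar limit on the diagonal) delivers the Minty inequality
\begin{equation*}
\int_\Omega(\bar\beta-\beta(w))(\bar\zeta-\zeta(w))\ge 0\qquad\forall w\in L^\infty(\Omega).
\end{equation*}
Since $\beta+\zeta$ is continuous and strictly increasing by \eqref{assum:incr}, its inverse is continuous on its image; define $\bu:=(\beta+\zeta)^{-1}(\bar\beta+\bar\zeta)$, which is measurable and belongs to $L^2(\Omega)$ thanks to \eqref{assum:zeta}, and satisfies $\beta(\bu)+\zeta(\bu)=\bar\beta+\bar\zeta$ a.e. Using truncations of $w=\bu$ in the Minty inequality (and dominated convergence) yields $\int(\bar\beta-\beta(\bu))(\bar\zeta-\zeta(\bu))\ge 0$; but by the identity $\bar\zeta-\zeta(\bu)=-(\bar\beta-\beta(\bu))$, this integrand equals $-(\bar\beta-\beta(\bu))^2$, forcing $\bar\beta=\beta(\bu)$ and hence $\bar\zeta=\zeta(\bu)$ a.e.

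\emph{Conclusion and obstacle.} Plugging $\bar\beta=\beta(\bu)$, $\bar\zeta=\zeta(\bu)$ into the limit equation shows $\bu$ solves \eqref{stefan:weak}; uniqueness (Theorem \ref{th:exist.uniq}) makes the limit independent of the extracted subsequence, so all asserted convergences hold along the original sequence. The crux is the Minty identification: because $\beta,\zeta$ may admit plateaux, $\PiDm u_m$ itself need not converge strongly, so one cannot pass to the limit inside either nonlinearity individually. The trick is to lift monotonicity to the pair $(\bar\beta,\bar\zeta)$ — enabled by the strong convergence of $\PiDm\zeta(u_m)$ granted by compactness — and to exploit the strict monotonicity of $\beta+\zeta$ from \eqref{assum:incr} to extract a common preimage $\bu$.
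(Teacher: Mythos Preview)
Your proof is correct and follows essentially the same compactness--monotonicity route as the paper. The only differences are cosmetic: you establish the strong convergence of $\nablaDm\zeta(u_m)$ \emph{before} the Minty identification (using the limit equation written in terms of the abstract limits $\bar\beta,\bar\zeta$), whereas the paper does it afterwards; and you spell out the Minty trick by hand---defining $\bu:=(\beta+\zeta)^{-1}(\bar\beta+\bar\zeta)$ and exploiting $\bar\zeta-\zeta(\bu)=-(\bar\beta-\beta(\bu))$---whereas the paper simply invokes \cite[Lemma~D.10]{gdm}. Both orderings work and both identifications are equivalent; your explicit construction is perhaps more self-contained, while the paper's presentation is more compact.
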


\begin{proof}
Using Estimate \eqref{eq:estim} and \eqref{def:gdconsistency:pdisc} (which shows that $\norm[L^2]{\QD[m] f}$ is bounded), the compactness and the limit-conformity of $(\disc[m])_{m\in\N}$, \cite[Lemma 2.15]{gdm} gives $Z\in H^1_0(\Omega)$ and $B\in L^2(\Omega)$ such that, up to a subsequence (not made explicit in the following), $\PiDm \zeta(u_m)\to Z$ strongly in $L^2(\Omega)$, $\nablaDm \zeta(u_m)\to \nabla Z$ weakly in $L^2(\Omega)^d$ and $\beta(\PiDm u_m)\to B$ weakly in $L^2(\Omega)$. By weak/strong convergence we infer that
\[
 \lim_{m\to\infty} \int_\Omega \PiDm \beta(u_m)\PiDm\zeta(u_m) = \int_\Omega B Z.
\]
The monotonicity properties of $\beta$ and $\zeta$ then enable us to apply \cite[Lemma D.10]{gdm} (a Minty's trick) to get $\bu\in L^2(\Omega)$ such that $Z = \zeta(\bu)$ and $B = \beta(\bu)$. 

We now show that $\bu$ solves \eqref{stefan:weak}. Let $\varphi\in H^1_0(\Omega)$ and let $v_m\in\XDmz$ be an element that realises the minimum defining $S_{\disc[m]}(\varphi)$. By \eqref{def:gdconsistency:pdisc} we have $\PiDm v_m\to \varphi$ in $L^2(\Omega)$ and $\nablaDm v_m\to \nabla\varphi$ in $L^2(\Omega)^d$. Use $v_m$ as a test function in GS \eqref{stefan:gs} satisfied by $u_m$. The convergence properties of $\beta(\PiDm u_m)$ and $\nablaDm\zeta(u_m)$ towards $B=\beta(\bu)$ and $\nabla Z=\nabla\zeta(\bu)$, together with the convergence $\QD[m]f\to f$ in $L^2(\Omega)$ stated in \eqref{def:gdconsistency:pdisc}, enable us to take the limit $m\to\infty$ of the scheme to see that $\bu$ is a solution to \eqref{stefan:weak}. The uniqueness of $\bu$ (see Theorem \ref{th:exist.uniq}) shows that the convergence properties holds for the whole sequence $(u_m)_{m\in\N}$ instead of just along the subsequence previously extracted.

It remains to establish the strong convergence of $\nablaDm\zeta(u_m)$. We let $m\to +\infty$ in the GS \eqref{stefan:gs} with $\disc=\disc[m]$ and $v=\zeta(u_m)$, that is,
\[
 \int_\Omega \beta(\PiDm u_m) \PiDm \zeta(u_m) +\int_\Omega \Lambda\nablaDm\zeta(u_m) \cdot\nablaDm\zeta(u_m) 
=\int_\Omega \QD[m] f\,\PiDm \zeta(u_m)-\int_\Omega F\cdot\nablaD\zeta(u_m).
\]
This yields
\[
\lim_{m\to\infty}\int_\Omega \Lambda\nablaDm\zeta(u_m) \cdot\nablaDm\zeta(u_m) =\int_\Omega (f\zeta(\bu) - F\cdot\nabla\zeta(\bu)-  \beta(\bu) \zeta(\bu)) 
= \int_\Omega \Lambda\nabla\zeta(\bu) \cdot\nabla\zeta(\bu),
\]
where the conclusion follows using $\bv=\bu$ in \eqref{stefan:weak}. Since $(\xi,\eta)\mapsto \int_\Omega \Lambda\xi\cdot\eta$ is an inner product on $L^2(\Omega)$, this relation and the weak convergence of $(\nablaDm\zeta(u_m))_{m\in\N}$ imply the strong convergence of $\nablaDm\zeta(u_m)$ to $\nabla\zeta(\bu)$ in $L^2$.
\end{proof}

\subsection{Error estimate}\label{sec:errest}

The analysis above pinpoints the required structure on a numerical scheme to ensure proper bounds and the convergence of the solution --- namely, the piecewise constant reconstruction property. We now want to establish error estimates to better assess this convergence. In practice, one usually starts from a given numerical method and would like to apply it to the model under consideration. Following our discussion above, if the given method does not have a piecewise constant function reconstruction, it has to be modified into a method that has such a reconstruction. This process is called the mass-lumping of the original scheme. In the context of the GDM, this notion is translated in the following definition.

\begin{definition}[Mass-lumped GD]\label{def:mlgd} 
Let $\discs=(\XDz,\PiDs,\nablaD,\QD)$ be a Gradient Discretisation. A Gradient Discretisation $\disc$ is a mass-lumped version of $\discs$ if it only differs from $\discs$ through the function reconstruction (that is, $\disc=(\XDz,\PiD,\nablaD,\QD)$), and if $\PiD$ is a piecewise constant reconstruction in the sense of Definition \ref{def:pw}.
\end{definition}

\begin{remark}
Of course, if $\discs$ already has a piecewise constant reconstruction as in Definition \ref{def:pw}, one can take $\disc = \discs$.
\end{remark}

The following theorem states a general error estimate on the Gradient Scheme \eqref{stefan:gs}.

\begin{theorem}[Error estimate for the GS]\label{th:error.est}
Assume \eqref{eq:assum} and let $\disc$ be a mass-lumped version of a Gradient Discretisation $\discs$, in the sense of Definition \ref{def:mlgd}.
 Let $u$ be a solution to the Gradient Scheme \eqref{stefan:gs}, and let $\bu$ be the solution to \eqref{stefan:weak} (see Theorem \ref{th:exist.uniq}). Then, for any $\ID\zeta(\bu)\in\XDz$, there holds
\begin{multline}
\norm[L^2]{\nablaD [\ID\zeta(\bu) - \zeta(u)]}\\
\lesssim  W_{\discs}(\Lambda\nabla \zeta(\bu)+F) + \norm[L^2]{\nablaD  \ID\zeta(\bu)-\nabla\zeta(\bu)}
+R_{\disc,\discs}(\bu,f)+ \term_{\disc}(\bu,u),
\label{eq:error.est.new}
\end{multline}
where
\begin{equation}
 R_{\disc,\discs}(\bu,f) = \max_{v\in \XDz\backslash\{0\}}\frac{1}{\norm[L^2]{\nablaD v}}\left| \int_\Omega \PiD v [ \beta(\QD\bu) - \QD f]-  \PiDs v [\beta(\bu)-f]\right|,
\label{def:rdiscdiscs}\end{equation}
and
\begin{equation}\label{eq:termD}
\term_{\disc}(\bu,u)=\left(\max\left\{\int_\Omega\left[\beta(\QD\bu) -  \beta(\PiD u) \right] \left[\zeta(\QD\bu)-\PiD\ID\zeta(\bu)\right];0\right\}\right)^{1/2}.
\end{equation}
\end{theorem}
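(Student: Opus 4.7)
The plan is to test both the scheme and the weak formulation against a common discrete function and carefully exploit the mass-lumping along with monotonicity.

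First, I would set $w := \ID\zeta(\bu) - \zeta(u) \in \XDz$ and start from coercivity of $\Lambda$:
\[
\underline{\lambda}\norm[L^2]{\nablaD w}^2 \le \int_\Omega \Lambda \nablaD w \cdot \nablaD w = \int_\Omega \Lambda \nablaD \ID\zeta(\bu) \cdot \nablaD w - \int_\Omega \Lambda \nablaD \zeta(u) \cdot \nablaD w.
\]
Using the scheme \eqref{stefan:gs} with test function $w$ to substitute the second term, and rewriting $\Lambda \nablaD \ID\zeta(\bu) = \Lambda(\nablaD \ID\zeta(\bu) - \nabla\zeta(\bu)) + \Lambda \nabla\zeta(\bu)$ in the first, would yield
\[
\int_\Omega \Lambda \nablaD w \cdot \nablaD w = \int_\Omega \Lambda(\nablaD \ID\zeta(\bu) - \nabla\zeta(\bu)) \cdot \nablaD w + \int_\Omega (\Lambda\nabla\zeta(\bu)+F)\cdot \nablaD w + \int_\Omega \PiD w\,[\beta(\PiD u) - \QD f].
\]
The first term on the right is directly bounded by $\overline{\lambda}\norm[L^2]{\nablaD \ID\zeta(\bu) - \nabla\zeta(\bu)}\norm[L^2]{\nablaD w}$.

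Next, I would apply the limit-conformity measure $W_{\discs}$ to $\vec{\psi} = \Lambda\nabla\zeta(\bu) + F \in H_\div(\Omega)$, which makes sense since the strong form of \eqref{stefan:strong} gives $\div\vec\psi = \beta(\bu) - f \in L^2(\Omega)$. This replaces the second right-hand term by $-\int_\Omega \PiDs w\,[\beta(\bu)-f]$ up to an error bounded by $W_{\discs}(\Lambda\nabla\zeta(\bu)+F)\norm[L^2]{\nablaD w}$. Combining with the third term and inserting $\pm\,\PiD w\,\beta(\QD\bu)$, I would split
\[
\int_\Omega \PiD w[\beta(\PiD u)-\QD f] - \PiDs w[\beta(\bu)-f] = \underbrace{\int_\Omega \PiD w[\beta(\QD\bu)-\QD f] - \PiDs w[\beta(\bu)-f]}_{\text{bounded by }R_{\disc,\discs}\norm[L^2]{\nablaD w}} + \int_\Omega \PiD w\,[\beta(\PiD u)-\beta(\QD\bu)].
\]

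The heart of the argument is the last term, and handling it is the step I expect to be the main obstacle: it is precisely where the mass-lumping and the monotonicity of $\beta,\zeta$ must be combined. Using the commutation \eqref{eq:commut}, $\PiD w = \PiD \ID\zeta(\bu) - \zeta(\PiD u)$. Adding and subtracting $\zeta(\QD\bu)$ inside, I would write
\[
\PiD w \,[\beta(\PiD u)-\beta(\QD\bu)] = [\PiD\ID\zeta(\bu) - \zeta(\QD\bu)][\beta(\PiD u)-\beta(\QD\bu)] - [\zeta(\PiD u)-\zeta(\QD\bu)][\beta(\PiD u)-\beta(\QD\bu)].
\]
Monotonicity of $\beta$ and $\zeta$ forces the second bracket to be non-negative pointwise, so dropping it gives an upper bound by $\int_\Omega[\beta(\QD\bu)-\beta(\PiD u)][\zeta(\QD\bu)-\PiD\ID\zeta(\bu)]$, which by definition \eqref{eq:termD} is at most $\term_\disc(\bu,u)^2$ (note the max with 0 in the definition is crucial for when this integral is negative).

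Collecting everything yields an inequality of the form $\underline{\lambda}\norm[L^2]{\nablaD w}^2 \le \term_\disc(\bu,u)^2 + C\,\mathcal B\,\norm[L^2]{\nablaD w}$ with $\mathcal B$ the sum of the three ``linear'' error contributors. A standard Young-type resolution of this quadratic inequality (e.g.\ $X^2 \le A + BX \Rightarrow X \le \sqrt{A} + B$ up to constants) then produces exactly the bound \eqref{eq:error.est.new}.
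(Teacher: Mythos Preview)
Your proof is correct and follows essentially the same approach as the paper. Both arguments select the same test function $w=\ID\zeta(\bu)-\zeta(u)$, invoke $W_{\discs}$ on $\Lambda\nabla\zeta(\bu)+F$ (using $\div(\Lambda\nabla\zeta(\bu)+F)=\beta(\bu)-f$), insert $\pm\beta(\QD\bu)$ to isolate $R_{\disc,\discs}$, insert $\pm\zeta(\QD\bu)$ and exploit the monotonicity inequality $[\beta(b)-\beta(a)][\zeta(b)-\zeta(a)]\ge 0$ to produce $\term_\disc(\bu,u)^2$, and conclude via a Young-type quadratic resolution; the only difference is presentational (you start from coercivity and substitute the scheme first, whereas the paper starts from the $W_{\discs}$ bound and then substitutes the scheme).
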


\begin{remark}[Choice of {$\ID\zeta(\bu)$}]
The element $\ID\zeta(\bu)$ can be any vector in $\XDz$. However, the estimate \eqref{eq:error.est.new} is obviously useful only if $\nablaD  \ID\zeta(\bu)$ is close to $\nabla\zeta(\bu)$. This is usually achieved selecting for $\ID\zeta(\bu)$ a suitable interpolate of $\zeta(\bu)$, which is why we used this notation.
\end{remark}

\begin{remark}[Approximation of {$\zeta(\bu)$}]
Introducing $\pm\nablaD(\ID\zeta(\bu))$ and using a triangle inequality, we have
\[
\norm[L^2]{\nablaD\zeta(u)-\nabla\zeta(\bu)}\le \norm[L^2]{\nablaD[\zeta(u) - \ID \zeta(\bu)]} + 
\norm[L^2]{\nablaD\ID \zeta(\bu)-\nabla\zeta(\bu)}.
\]
Similarly, introducing $\pm\PiDs \ID \zeta(\bu)$ and using the triangle inequality and the definition of $C_{\discs}$, we have
\begin{align*}
\norm[L^2]{\PiDs \zeta(u)-\zeta(\bu)}\le{}& \norm[L^2]{\PiDs[] [\zeta(u)-\ID\zeta(\bu)]}+\norm[L^2]{\PiDs \ID\zeta(\bu)-\zeta(\bu)}\\
\le{}& C_{\discs}\norm[L^2]{\nablaD [\zeta(u)-\ID\zeta(\bu)]}+\norm[L^2]{\PiDs \ID\zeta(\bu)-\zeta(\bu)}.
\end{align*}
An estimate on $\nablaD [\zeta(u)-\ID\zeta(\bu)]$ as in Theorem \ref{th:error.est} therefore also yields an estimate on $\nablaD\zeta(u)-\nabla\zeta(\bu)$ and $\PiDs \zeta(u)-\zeta(\bu)$, modulo the additional interpolation errors $\nablaD\ID \zeta(\bu)-\nabla\zeta(\bu)$ and $\PiDs \ID\zeta(\bu)-\zeta(\bu)$. If $\discs$ has function and gradient reconstructions that are piecewise polynomial of high-order, these interpolation errors can be expected to have a high rate of convergence with respect to the mesh size.

The same argument also gives an error estimate on $\PiD\zeta(u)-\zeta(\bu)$, but the corresponding interpolation error $\PiD \ID\zeta(\bu)-\zeta(\bu)$ is limited to a first-order convergence since $\PiD$ is a piecewise constant reconstruction.
\end{remark}

\begin{proof}[Proof of Theorem \ref{th:error.est}]
Since $\bu$ is the solution to \eqref{stefan:weak}, we have $\div(\Lambda \nabla\zeta(\bu)+F)=\beta(\bu)-f\in L^2(\Omega)$. Hence, by definition \eqref{def:WD} of $W_{\disc}$ applied to $\discs$, for any $v\in \XDz$,
\begin{align*}
 \norm[\disc]{v}W_{\discs}(\Lambda\nabla \zeta(\bu)+F)\ge{}&
 \int_\Omega \nablaD v\cdot (\Lambda\nabla\zeta(\bu)+F) + \PiDs v \div(\Lambda\nabla\zeta(\bu)+F)   \\
={}& \int_\Omega \nablaD v\cdot \Lambda\nabla\zeta(\bu) + F\cdot\nablaD v+ \PiDs v [\beta(\bu)-f].
\end{align*}
Substituting the term involving $F$ using \eqref{stefan:gs}, we get
\begin{multline}
\label{for.non.homo}
\int_\Omega \Lambda \nablaD v\cdot (\nabla \zeta(\bu)- \nablaD \zeta(u)) +  (\QD f -  \beta(\PiD u))\PiD v + \PiDs v [\beta(\bu)-f] \\
\le\norm[\disc]{v} W_{\discs}(\Lambda\nabla \zeta(\bu)+F).
\end{multline}
Introducing $\pm\Lambda\nablaD v\cdot\nablaD \ID\zeta(\bu)$ and $\pm \beta(\QD \bu)\PiD v$ in the left-hand side, using the Cauchy--Schwarz inequality and recalling that $\norm[\disc]{v}=\norm[L^2]{\nablaD v}$, we infer
\begin{multline}\label{est:presquefin}
\int_\Omega\Lambda \nablaD v\cdot (\nablaD \ID\zeta(\bu) - \nablaD \zeta(u)) + (\beta(\QD\bu) -   \beta(\PiD u))\PiD v \\
+ \int_\Omega \PiDs v [\beta(\bu)-f] - \PiD v [\beta(\QD\bu) - \QD f] \\
\lesssim \norm[L^2]{\nablaD v}\left[W_{\discs}(\Lambda\nabla \zeta(\bu)+F) + \norm[L^2]{\nablaD  \ID\zeta(\bu)-\nabla\zeta(\bu)}\right].
\end{multline}
Choose $v = \ID\zeta(\bu) - \zeta(u)$. Introducing $\pm\zeta(\QD\bu)$ and using the monotonicity of $\zeta$ and $\beta$
(which yields $[\beta(b)-\beta(a)][\zeta(b)-\zeta(a)]\ge 0$ for all $a,b\in\R$) together with \eqref{eq:commut}, we have
\begin{align*}
(\beta(\QD\bu) -   \beta(\PiD u))\PiD v ={}& [\beta(\QD\bu) -   \beta(\PiD u)]\,[\PiD\ID\zeta(\bu)-\zeta(\QD\bu)]\\
&+[\beta(\QD\bu) -   \beta(\PiD u)]\,[\zeta(\QD\bu)-\zeta(\PiD u)]\\
\ge{}&[\beta(\QD\bu) -   \beta(\PiD u)]\,[\PiD\ID\zeta(\bu)-\zeta(\QD\bu)].
\end{align*}
Plugging this into \eqref{est:presquefin} and using \eqref{assum:lambda} leads to
\begin{align*}
&\norm[L^2]{\nablaD [\ID\zeta(\bu) - \zeta(u)]}^2
\lesssim  \norm[L^2]{\nablaD v} \left[W_{\discs}(\Lambda\nabla \zeta(\bu)+F) + \norm[L^2]{\nablaD  \ID\zeta(\bu)-\nabla\zeta(\bu)}\right]\\
{}&\quad\quad +\int_\Omega \PiD v [\beta(\QD\bu) - \QD f] -  \PiDs v [\beta(\bu)-f] + \int_\Omega[\beta(\QD\bu) -  \beta(\PiD u)]\,[\zeta(\QD\bu)-\PiD\ID\zeta(\bu)]\\
&\quad \lesssim  \norm[L^2]{\nablaD v} \big[W_{\discs}(\Lambda\nabla \zeta(\bu)+F) + \norm[L^2]{\nablaD  \ID\zeta(\bu)-\nabla\zeta(\bu)}+R_{\disc,\discs}(\bu,f)\big]+ \term_{\disc}(\bu,u)^2.
\end{align*}
Using the Young inequality on the first term in the right-hand side and recalling that $v=\ID\zeta(\bu) - \zeta(u)$ leads to
\begin{multline*}
\norm[L^2]{\nablaD [\ID\zeta(\bu) - \zeta(u)]}^2\\
\lesssim \left[W_{\discs}(\Lambda\nabla \zeta(\bu)+F) + \norm[L^2]{\nablaD  \ID\zeta(\bu)-\nabla\zeta(\bu)} + R_{\disc,\discs}(\bu,f)\right]^2 + \term_{\disc}(\bu,u)^2.
\end{multline*}
The proof of \eqref{eq:error.est.new} is complete taking the square root of this estimate and using
$\sqrt{a^2+b^2}\le a+b$ for all $a,b\ge 0$. \end{proof}

From the general estimate \eqref{eq:error.est.new} we deduce the following bound on the error, which often leads to (low-order) rates of convergence as noted in Remark \ref{rem:rate.low}. This estimate will be improved, for situations corresponding to classical mass-lumping versions of schemes with nodal interpolates, in Section \ref{sec:GS.quad}.

\begin{corollary}\label{cor:rates}
Under the assumptions of Theorem \ref{th:error.est}, define
\[
\alpha_{\disc,\discs}=\max_{v\in\XDz\backslash\{0\}}\frac{\norm[L^2]{\PiD v-\PiDs v}}{\norm[L^2]{\nablaD v}}
\]
and let $\ID\zeta(\bu)$ be given by $\ID \zeta(\bu)=\mathop{\rm argmin}_{v\in \XDz}\left(\norm[L^2]{\nablaD v-\nabla \zeta(\bu)}+\norm[L^2]{\PiD v-\zeta(\bu)}\right)$.
Then,
\begin{equation}\label{eq:est.general}
\begin{aligned}
\norm[L^2]{\nablaD [\ID\zeta(\bu) - \zeta(u)]}
\lesssim{}&  W_{\discs}(\Lambda\nabla \zeta(\bu)+F) + S_{\disc}(\zeta(\bu))\\
&+\alpha_{\disc,\discs}+\norm[L^2]{\beta(\QD\bu) - \beta(\bu)} +\norm[L^2]{\QD f-f}\\
&+ \left(S_{\disc}(\zeta(\bu))+\norm[L^2]{\zeta(\bu)-\zeta(\QD \bu)}\right)^{\frac12},
\end{aligned}
\end{equation}
where the hidden multiplicative constant in $\lesssim$ additionally depends on $\norm[L^2]{\beta(\QD \bu)}$ and $\norm[L^2]{\QD f}$.
\end{corollary}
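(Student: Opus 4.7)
The plan is to start from the general error estimate \eqref{eq:error.est.new} of Theorem \ref{th:error.est} and bound each of its four terms separately. The first, $W_{\discs}(\Lambda\nabla \zeta(\bu)+F)$, is already present on the right-hand side of \eqref{eq:est.general}, so nothing needs to be done for it. The second, $\norm[L^2]{\nablaD \ID\zeta(\bu)-\nabla\zeta(\bu)}$, is controlled by $S_{\disc}(\zeta(\bu))$ directly by the choice of $\ID\zeta(\bu)$ as a minimiser in the definition \eqref{def:SD} of $S_{\disc}$ (and this same choice also gives $\norm[L^2]{\PiD \ID\zeta(\bu)-\zeta(\bu)} \le S_{\disc}(\zeta(\bu))$, which will be reused below).

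For $R_{\disc,\discs}(\bu,f)$ the key is to split the integrand defining it as
\[
\PiD v\,[\beta(\QD\bu)-\QD f]-\PiDs v\,[\beta(\bu)-f]= (\PiD v-\PiDs v)\,[\beta(\QD\bu)-\QD f] + \PiDs v\,\left[(\beta(\QD\bu)-\beta(\bu))-(\QD f-f)\right].
\]
Applying Cauchy--Schwarz to each piece, dividing by $\norm[L^2]{\nablaD v}$ and using the definition of $\alpha_{\disc,\discs}$ on the first piece and the discrete Poincar\'e constant $C_{\discs}$ on the second, yields
\[
R_{\disc,\discs}(\bu,f)\lesssim \alpha_{\disc,\discs}\,\bigl(\norm[L^2]{\beta(\QD\bu)}+\norm[L^2]{\QD f}\bigr)+\norm[L^2]{\beta(\QD\bu)-\beta(\bu)}+\norm[L^2]{\QD f-f},
\]
which is of the required form once $\norm[L^2]{\beta(\QD\bu)}$ and $\norm[L^2]{\QD f}$ are absorbed in the hidden constant.

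Finally, for $\term_{\disc}(\bu,u)$ the strategy is to apply Cauchy--Schwarz inside the integral \eqref{eq:termD} and then insert $\pm \zeta(\bu)$ to write
\[
\norm[L^2]{\zeta(\QD\bu)-\PiD\ID\zeta(\bu)}\le \norm[L^2]{\zeta(\QD\bu)-\zeta(\bu)}+\norm[L^2]{\zeta(\bu)-\PiD\ID\zeta(\bu)}\le \norm[L^2]{\zeta(\QD\bu)-\zeta(\bu)}+S_{\disc}(\zeta(\bu)),
\]
using again the minimising property of $\ID\zeta(\bu)$. Combined with the crude bound $\norm[L^2]{\beta(\QD\bu)-\beta(\PiD u)}\le \norm[L^2]{\beta(\QD\bu)}+\norm[L^2]{\beta(\PiD u)}$, this gives
\[
\term_{\disc}(\bu,u)\lesssim \bigl(\norm[L^2]{\beta(\QD\bu)}+\norm[L^2]{\beta(\PiD u)}\bigr)^{1/2}\,\bigl(S_{\disc}(\zeta(\bu))+\norm[L^2]{\zeta(\bu)-\zeta(\QD\bu)}\bigr)^{1/2}.
\]
The a priori bound of Lemma \ref{lem:estim.disc} shows that $\norm[L^2]{\beta(\PiD u)}$ is controlled by the data (and by $\norm[L^2]{\QD f}$), so its prefactor is absorbed into the hidden constant. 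Substituting the three bounds into \eqref{eq:error.est.new} proves \eqref{eq:est.general}.

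The only genuinely delicate point is the bookkeeping of which quantities end up in the hidden constant: one must check that $\norm[L^2]{\beta(\PiD u)}$, which appears in the bound on $\term_{\disc}$, can be dominated using Lemma \ref{lem:estim.disc} by a quantity depending only on $\norm[L^2]{\QD f}$, $\norm[L^2]{F}$ and the data, so that the statement that the constant ``additionally depends on $\norm[L^2]{\beta(\QD\bu)}$ and $\norm[L^2]{\QD f}$'' is honest. Everything else is Cauchy--Schwarz, triangle inequalities, and the three defining quantities $S_{\disc}$, $C_{\discs}$ and $\alpha_{\disc,\discs}$.
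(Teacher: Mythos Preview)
Your proof is correct and follows essentially the same route as the paper: bound the interpolation term by $S_{\disc}$ via the minimising choice of $\ID\zeta(\bu)$, split $R_{\disc,\discs}$ into an $\alpha_{\disc,\discs}$-piece plus a quadrature-error piece, and handle $\term_{\disc}$ by Cauchy--Schwarz together with the a priori bound on $\norm[L^2]{\beta(\PiD u)}$ from Lemma~\ref{lem:estim.disc}. The only cosmetic difference is in the decomposition of $R_{\disc,\discs}$: the paper writes $(\PiD v-\PiDs v)[\beta(\bu)-f]+\PiD v\bigl[(\beta(\QD\bu)-\QD f)-(\beta(\bu)-f)\bigr]$, pairing $\alpha_{\disc,\discs}$ with $\norm[L^2]{\beta(\bu)-f}$ (controlled by the data alone) and invoking $C_{\disc}$ on the second piece, whereas you pair $\alpha_{\disc,\discs}$ with $\norm[L^2]{\beta(\QD\bu)-\QD f}$ and invoke $C_{\discs}$; since $C_{\discs}\le C_{\disc}+\alpha_{\disc,\discs}$ this makes no material difference, but the paper's choice keeps the hidden constant strictly within the declared conventions.
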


\begin{remark}[Rate of convergence]\label{rem:rate.low}
For all classical mass-lumping of schemes based on a mesh of size $h$, we have $\alpha_{\disc,\discs}=\mathcal O(h)$ (see, e.g., \cite[Eqs. (8.18) and (9.46)]{gdm}). Likewise, any reasonable quadrature rule is locally exact on piecewise constant functions and thus, if $\beta,\zeta$ are globally Lipschitz-continuous and $\bu,f$ are locally $H^1$, we expect
$\mathcal O(h)$ estimates on $\norm[L^2]{\beta(\QD\bu) - \beta(\bu)}$, $\norm[L^2]{\QD f-f}$ and $\norm[L^2]{\zeta(\bu)-\zeta(\QD \bu)}$. The estimate \eqref{eq:est.general} can thus be expected, most of the time, to provide an $\mathcal O(h^{\frac12})$ rate of convergence, the limiting factor in the right-hand side of \eqref{eq:est.general} being the last one, coming from $\term_{\disc}(\bu,u)$. We will see in Section \ref{sec:GS.quad} that this estimate is however very pessimistic and, in many cases, can be improved to higher powers of $h$ (see Remark \ref{rem:rate.high}).
\end{remark}

\begin{proof}
We estimate each term, except the first one, in the right-hand side of \eqref{eq:error.est.new}.
By choice of $\ID\zeta(\bu)$ and definition \eqref{def:SD} of $S_{\disc}$,
\[
\norm[L^2]{\nablaD \ID\zeta(\bu)-\nabla \zeta(\bu)}+\norm[L^2]{\PiD \ID\zeta(\bu)-\zeta(\bu)}=S_{\disc}(\zeta(\bu)).
\]
Hence the term $\norm[L^2]{\nablaD  \ID\zeta(\bu)-\nabla\zeta(\bu)}$ in \eqref{eq:error.est.new} is bounded above by $S_{\disc}(\zeta(\bu))$.

Using the definition of $\alpha_{\disc,\discs}$ and of $C_{\disc}$, we have
\begin{align*}
R_{\disc,\discs}(\bu,f)\le{}& \alpha_{\disc,\discs}\norm[L^2]{\beta(\bu)-f} \\
&+ \max_{v\in \XDz\backslash\{0\}}\frac{1}{\norm[L^2]{\nablaD v}}\left| \int_\Omega \PiD v [ \beta(\QD\bu) - \QD f]-  \PiD v [\beta(\bu)-f]\right|\\
\lesssim{}&\alpha_{\disc,\discs}+C_{\disc}\norm[L^2]{[ \beta(\QD\bu) - \QD f]-[\beta(\bu)-f]}\\
\lesssim{}&\alpha_{\disc,\discs}+\norm[L^2]{\beta(\QD\bu)-\beta(\bu)}+\norm[L^2]{\QD f -f}.
\end{align*}
This gives the third, fourth, and fifth terms in the right-hand side of \eqref{eq:est.general}.

For the last term in this estimate, we write, using the Cauchy--Schwarz inequality and the a priori bound \eqref{eq:estim} on $\beta(\PiD u)$,
\begin{align*}
\term_{\disc}(\bu,u)^2\le{}& \norm[L^2]{\beta(\QD\bu)-\beta(\PiD u)}\norm[L^2]{\PiD\ID\zeta(\bu)-\zeta(\QD\bu)}\\
\lesssim{}& (\norm[L^2]{\beta(\QD\bu)}+\norm[L^2]{\QD f}+\norm[L^2]{F})\left(\norm[L^2]{\PiD\ID\zeta(\bu)-\zeta(\bu)}+\norm[L^2]{\zeta(\bu)-\zeta(\QD\bu)}\right).
\end{align*}
The proof is complete taking the square root and recalling that $\norm[L^2]{\PiD\ID\zeta(\bu)-\zeta(\bu)}\le S_{\disc}(\zeta(\bu))$. \end{proof}

\subsection{Suitable quadrature rules lead to high-order estimates}\label{sec:GS.quad}

Let us first make the following broken regularity assumption on the data and solution.

\begin{assumption}[Data and exact solution]\label{assum:data.bu}
 $F=0$ and, $\bu$ being the solution to \eqref{stefan:weak} and $s\ge 1$ being an integer, $f$ and $\beta(\bu)$ belong to the broken Sobolev space
\[
W^{s,\infty}(\mesh):=\Big\{g\in L^\infty(\Omega)\,:\,g_{|K}\in W^{s,\infty}(K)\quad \forall K\in\mesh\Big\}.
\]
This space is endowed with the norm $\norm[W^{s,\infty}(\mesh)]{g}:=\max_{K\in\mesh} \norm[W^{s,\infty}(K)]{g}$. 
\end{assumption}

\begin{remark}[Piecewise continuity and local smoothness]\label{rem:gWs.cont}
$W^{s,\infty}(\mesh)$ is a subspace of 
\begin{equation}\label{def:Cmesh}
C(\mesh):=\{g\in L^\infty(\Omega)\,:\,g_{|K}\in C(\overline{K})\quad\forall K\in\mesh\}.
\end{equation}
Assumption \ref{assum:data.bu} only imposes a local smoothness of $f$ and $\beta(\bu)$, which can in particular be discontinuous across cell interfaces.
It is also worthwhile noticing that, since $F=0$, $\zeta(\bu)$ is continuous (see Theorem \ref{th:exist.uniq}). Hence, the values of $\bu$ at one of its discontinuities must belong to a plateau of $\zeta$; in particular, if $\zeta$ does not have any plateau, then $\bu$ is globally continuous.

Note that if $F\not=0$ (and $F$ is not smooth), as in Test Case \refS{tc:casundstefhmun} in Section \ref{sec:num}, $\zeta(\bu)$ is not expected to have any additional regularity beyond $H^1$ and therefore high-order estimates, even if they can theoretically be established, are of little use. Actually, numerical tests show that high-order schemes can deliver estimates that are no better than low-order schemes, see e.g. Table \ref{tab:casundstefhmun}.
\end{remark}

\begin{remark}[$\bu$ is in $C(\mesh)$]\label{rem:uCmesh}
Theorem \ref{th:exist.uniq} and Assumption \ref{assum:data.bu} show that $\zeta(\bu)\in C(\overline{\Omega})$ and $\beta(\bu)\in C(\mesh)$. By \eqref{assum:zeta}--\eqref{assum:incr}, $\beta+\zeta$ is a homeomorphism of $\R$. Hence, $\bu=(\beta+\zeta)^{-1}(\beta(\bu)+\zeta(\bu))$ and thus $\bu \in C(\mesh)$.
\end{remark}

In the rest of this section, we consider a slightly more precise setting than in Section \ref{sec:errest}. We assume that $\discs$ has a piecewise polynomial function reconstruction (possibly of high-order) and unknowns associated to nodes in the domain, and that specific local quadrature rules can be chosen. Typically, $\Poly{k}$ or $\mathbb{Q}^k$ Finite Elements and  Symmetric Interior Penalty discontinuous Galerkin (SIPG) schemes, with mass-lumping constructed using dual meshes around the nodes, fit into this setting. In what follows, $h_X$ denotes the diameter of a set $X\subset \R^d$.

\begin{assumption}[Structure of $\discs$, $\disc$ and $\ID\zeta(\bu)$]\label{assum:discs} ~
\begin{enumerate}[\hspace*{.4em}(1)]
\item \emph{(Mesh)} $\Omega\subset \R^d$ (with $d\le 3$) is a polytopal open set and $\mesh$ is a polytopal mesh of $\Omega$, in the sense of \cite[Definition 7.2]{gdm} (this definition actually represents the mesh as a quadruple of sets of cells, faces, points and vertices, that will not be useful to our purpose; we therefore confuse the mesh with the set of cells). The mesh size is $h=\max_{K\in\mesh}h_K$.
\item \emph{(Space)} There is a finite set $I$, partitioned into $I_{\Omega}$ and $I_{\partial\Omega}$, such that
\[
\XDz=\left\{v=(v_i)_{i\in I}\,:\, v_i\in \R\quad\forall i\in I\,,\; v_i=0\quad\forall i\in I_{\partial\Omega}\right\}.
\]
\item\label{PK} \emph{(Local polynomial reconstructions)} There is a polynomial degree $k\ge 1$ such that, for all $K\in\mesh$ and all $v\in \XDz$, $(\PiDs v)_{|K}\in \Poly{k}$.
\item\label{bound.grad} \emph{(Broken gradient bound)} There is $C_{\nabla}\ge 0$ such that, for all $v\in\XDz$, $\norm[L^2]{\nabla_h(\PiDs v)}\le C_{\nabla} \norm[L^2]{\nablaD v}$, where $\nabla_h$ is the usual broken gradient on $\mesh$.
\item\label{nodes} \emph{(Nodes)} There is a family $(\x_i)_{i\in I}$ of points in $\overline{\Omega}$, and subsets $(I_K)_{K\in\mesh}$ of $I$, such that $I=(\cup_{K\in\mesh}I_K)\cup I_{\partial\Omega}$ and, for all $v=(v_i)_{i\in I}\in\XDz$, all $K\in\mesh$ and all $i\in I_K$, we have $\x_i\in\overline{K}$ and $v_i=(\PiDs v)_{|K}(\x_i)$. Additionally, $\x_i\in\partial\Omega$ whenever $i\in I_{\partial\Omega}$.
\item\label{UiK} \emph{(Mass-lumping)} $\disc=(\XDz,\PiD,\nablaD,\QD)$ is a mass-lumped version of the Gradient Discretisation $\discs=(\XDz,\PiDs,\nablaD,\QD)$ in the sense of Definition \ref{def:mlgd}, which means that $\PiD$ is piecewise constant on a partition $\pwpart=(\pwpart_i)_{i\in I}$ in the sense of Definition \ref{def:pw}. We further assume that $\pwpart_i\cap K\neq \emptyset$ only if $i \in I_K$.
\item\label{def:ID} \emph{(Interpolate)} $\ID\zeta(\bu)\in \XDz$ is given by the nodal values of $\zeta(\bu)$, that is, $(\ID\zeta(\bu))_i=\zeta(\bu)(\x_i)$ for all $i\in I$. This is well-defined since $\zeta(\bu)\in C(\overline{\Omega})$ (see Remark \ref{rem:gWs.cont}).
\item\label{def:QD} \emph{(Quadrature rule)} The quadrature $\QD$ is defined on $C(\mesh)$ (see \eqref{def:Cmesh}) by
\begin{equation}\label{eq:QD.nodes}
\forall g\in C(\mesh)\,,\;\forall K\in\mesh\,,\quad (\QD g)_{|K}=\sum_{i\in I_K} g_{|K}(\x_i)\mathbf{1}_{\pwpart_i\cap K}.
\end{equation}
\item \label{assum:rho} \emph{(Mesh regularity)} There exists $\rho>0$ such that:
\begin{itemize}[\hspace*{0em}$\star$]
\item Any $K\in\mesh $ is star-shaped with respect to all points in a ball of radius $\rho h_K$,
\item For all $i\in I$, $\rho h_{\pwpart_i}\le h$.
\end{itemize}
\end{enumerate}
\end{assumption}

A few remarks are of order.

\begin{remark}[Local polynomial space]\label{rem:replace.PK}
The space $\Poly{k}$ in Item (\ref{PK}) could be replaced by any of its subspace $P_K$ that contains $\Poly{1}$; the analysis would not be hindered, and some assumptions could even be weakened (see Remark \ref{rem:PK.quad}). We chose to use $\Poly{k}$ to simplify the presentation.
\end{remark}

\begin{remark}[Nodes]
The same $i$ can belong to several $I_K$, as is the case for conforming Finite Elements. 
Conversely,  in the case of DG schemes for example, the following may occur (see the numerical example in Section \ref{sec:DG1D}):
\begin{itemize}[\hspace*{.4em}$\bullet$]
 \item one can have $\x_i=\x_j$ for $i\neq j$,
 \item $I_K$ does not necessarily contain all the indices $i\in I$ such that $\x_i\in\overline{K}$,
 \item there can exist $i\in I_{\partial\Omega}\setminus ( \cup_{K\in\mesh}I_K)$ --- but, in that case, $\pwpart_i = \emptyset$.
\end{itemize}
\end{remark}

\begin{remark}[Quadrature rule]
The Gradient Scheme \eqref{stefan:gs} is usually implemented by assembling cell contributions. When the source term $f$ is continuous on each cell, and since $\PiD v$ is constant on each $\pwpart_i$, it is customary to use the simple --- apparently low-order --- quadrature rule defined by \eqref{eq:QD.nodes}. 

We also note that, since $f$ and $\bu$ belong to $C(\mesh)$ by Remarks \ref{rem:gWs.cont} and \ref{rem:uCmesh}, the formula \eqref{eq:QD.nodes} can be used to compute $\QD f$ and $\QD \bu$. These are the only values of $\QD$ of interest in the following analysis.
\end{remark}

In the rest of this section, we write $a\lesssim b$ as a shorthand for ``$a\le Cb$ with $C$ not depending on $\mesh$ or $\pwpart$, but possibly depending on $\rho$,  $k$ and $C_{\nabla}$''.

\begin{theorem}[High-order error estimate] \label{th:high.order}
Under Assumption \ref{assum:discs}, let $\ell\ge 0$ be an integer and suppose that the local quadrature rules defined by $\QD$ are exact at degree $k+\ell$ (where $k$ is the degree in Item \eqref{PK} of Assumption \ref{assum:discs}), that is:
\begin{equation}\label{assum:quad.qr}
\forall K\in\mesh\,,\forall q\in \Poly{k+\ell}\,,\quad \int_K q= \int_K \QD q=\sum_{i\in I_K} |\pwpart_i\cap K|q(\x_i).
\end{equation}
Let $s\in\{1,\ldots,\ell+2\}$ be such that Assumption \ref{assum:data.bu} holds. Then, 
the solution $u$ to \eqref{stefan:gs} satisfies
\begin{multline}\label{eq:improved.disc.est}
\norm[L^2]{\nablaD [\ID \zeta(\bu) - \zeta(u)]}\\
\lesssim  W_{\discs}(\Lambda\nabla\zeta(\bu))+\norm[L^2]{\nablaD \ID \zeta(\bu)-\nabla\zeta(\bu)} + h^{s}(1+C_{\discs})\norm[W^{s,\infty}(\mesh)]{\beta(\bu)-f}.
\end{multline}
\end{theorem}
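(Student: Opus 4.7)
The plan is to apply Theorem \ref{th:error.est} with $F=0$ and the nodal interpolant $\ID\zeta(\bu)$ of Item \eqref{def:ID}, and to argue that two of the four terms in \eqref{eq:error.est.new} appear unchanged in the right-hand side of \eqref{eq:improved.disc.est} while the remaining two --- $\term_{\disc}(\bu,u)$ and $R_{\disc,\discs}(\bu,f)$ --- contribute the $h^s(1+C_{\discs})\norm[W^{s,\infty}(\mesh)]{\beta(\bu)-f}$ factor. A first simplification is that $\term_{\disc}(\bu,u)=0$: on each piece $\pwpart_i\cap K$, formula \eqref{eq:QD.nodes} gives $\QD\bu=\bu_{|K}(\x_i)$ and hence $\zeta(\QD\bu)=\zeta(\bu)(\x_i)$ (using continuity of $\zeta(\bu)$ on $\overline\Omega$), which matches $\PiD\ID\zeta(\bu)=(\ID\zeta(\bu))_i=\zeta(\bu)(\x_i)$; the integrand in \eqref{eq:termD} therefore vanishes pointwise.

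Next I would turn $R_{\disc,\discs}$ into a sum of local quadrature errors. Setting $g:=\beta(\bu)-f$, the pointwise action of $\QD$ gives $\beta(\QD\bu)-\QD f=\QD g$, and the identity $v_i=(\PiDs v)(\x_i)$ from Item \eqref{nodes} implies the cell-wise equality $\int_K\PiD v\,\QD g=\int_K\QD((\PiDs v)g)$; the quantity under the max in \eqref{def:rdiscdiscs} thus becomes $|\sum_K E_K|/\norm[L^2]{\nablaD v}$, with $E_K:=\int_K(\PiDs v)g-\QD((\PiDs v)g)$ the local quadrature error. By \eqref{assum:quad.qr}, $E_K$ vanishes whenever $(\PiDs v)g\in\Poly{k+\ell}$. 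For $s\le\ell+1$ this already closes the cell-wise argument: approximating $g$ on $K$ by a Dupont--Scott polynomial $\pi g\in\Poly{s-1}$ places $(\PiDs v)\pi g$ in $\Poly{k+\ell}$, so $E_K=E_K((\PiDs v)(g-\pi g))$, and $\norm[L^\infty(K)]{g-\pi g}\lesssim h_K^s\norm[W^{s,\infty}(K)]{g}$ together with the inverse estimate $\norm[L^\infty(K)]{\PiDs v}\lesssim h_K^{-d/2}\norm[L^2(K)]{\PiDs v}$ on $\Poly{k}$ (valid thanks to Item \eqref{assum:rho}) yield $|E_K|\lesssim h_K^{d/2+s}\norm[L^2(K)]{\PiDs v}\norm[W^{s,\infty}(K)]{g}$.

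The main obstacle is the case $s=\ell+2$, in which $(\PiDs v)\pi_{\ell+1}g\in\Poly{k+\ell+1}$ is no longer in the exactness range. To recover the missing order of $h_K$, I plan to split $\pi_{\ell+1}g=\pi_\ell g+r$ with $r\in\Poly{\ell+1}$: the contribution of $\pi_\ell g$ vanishes for free, and on the $r$ part I will write $E_K((\PiDs v)r)=E_K((\PiDs v-\overline{(\PiDs v)}_K)\,r)+\overline{(\PiDs v)}_K\,E_K(r)$, where the second piece vanishes because $r\in\Poly{\ell+1}\subset\Poly{k+\ell}$. An $L^\infty$-Poincar\'e inequality on the star-shaped cell $K$ (Item \eqref{assum:rho}), combined with the inverse estimate applied to $\nabla(\PiDs v)\in\Poly{k-1}$, produces $\norm[L^\infty(K)]{\PiDs v-\overline{(\PiDs v)}_K}\lesssim h_K^{1-d/2}\norm[L^2(K)]{\nabla(\PiDs v)}$, which supplies precisely the extra factor of $h_K$ needed to reach $|E_K|\lesssim h_K^{d/2+\ell+2}\norm[L^2(K)]{\nabla(\PiDs v)}\norm[W^{\ell+1,\infty}(K)]{g}$. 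The $(\PiDs v)(g-\pi_{\ell+1}g)$ remainder is handled exactly as in the previous case.

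Finally, I will sum the local bounds over $K$ via Cauchy--Schwarz, using $\sum_K h_K^d\lesssim|\Omega|$, the bound $\norm[L^2(\Omega)]{\PiDs v}\le C_{\discs}\norm[L^2]{\nablaD v}$ from the definition of $C_{\discs}$, and the broken-gradient control $\norm[L^2(\Omega)]{\nabla_h\PiDs v}\le C_\nabla\norm[L^2]{\nablaD v}$ from Item \eqref{bound.grad}. This produces $R_{\disc,\discs}(\bu,f)\lesssim h^s(1+C_{\discs})\norm[W^{s,\infty}(\mesh)]{\beta(\bu)-f}$, and combined with $\term_{\disc}(\bu,u)=0$ and Theorem \ref{th:error.est} proves \eqref{eq:improved.disc.est}.
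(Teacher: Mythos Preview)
Your proposal is correct and follows essentially the same route as the paper: reduce to Theorem \ref{th:error.est}, show $\term_{\disc}(\bu,u)=0$ via $\PiD\ID\zeta(\bu)=\zeta(\QD\bu)$, rewrite $R_{\disc,\discs}$ as a sum of local quadrature errors on $g\,\PiDs v$, and exploit \eqref{assum:quad.qr} together with the mean-subtraction $\PiDs v-\overline{(\PiDs v)}_K$ to gain the extra power of $h_K$ needed at the top end $s=\ell+2$. The only difference is presentational: the paper handles all $s\in\{1,\dots,\ell+2\}$ at once by subtracting the single polynomial
\[
q=(\proj[K]^\ell g)\,(\PiDs v)_{|K}+(\proj[K]^0(\PiDs v)_{|K})\,(\proj[K]^{\ell+1}g-\proj[K]^\ell g)\in\Poly{k+\ell},
\]
which yields the decomposition $\mathfrak{E}_K(g\PiDs v)=\mathfrak{E}_K\!\big([g-\proj[K]^\ell g][\PiDs v-\proj[K]^0\PiDs v]\big)+\mathfrak{E}_K\!\big((\proj[K]^0\PiDs v)[g-\proj[K]^{\ell+1}g]\big)$ in one stroke, whereas you split into the cases $s\le\ell+1$ and $s=\ell+2$; the underlying estimates are identical.
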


Let us make a few remarks.

\begin{remark}[Quadrature rule]\label{rem:quadrule}
The quadrature rule \eqref{assum:quad.qr} bears similarities with the conditions on quadrature rules highlighted for Finite Elements in \cite{ciarlet,CJRT}. However, in the proof below, the exactness condition \eqref{assum:quad.qr} responds to a different need than the ones encountered in the analysis of mass-lumped Finite Elements for linear equations. 

We also note that the precise geometry of the sets $\pwpart_i$ is not important as long as \eqref{assum:quad.qr} holds. This is due to the fact that, in the scheme \eqref{stefan:gs}, given the definitions \eqref{def:PiD.ML} and \eqref{eq:QD.nodes} of $\PiD$ and $\QD$, these sets $\pwpart_i$ only appear through the quantities $|\pwpart_i\cap K|$.
\end{remark}

\begin{remark}[Local polynomial space]\label{rem:PK.quad}
Following Remark \ref{rem:replace.PK}, if $\Poly{k}$ is replaced by $P_K$ in Item (\ref{PK}) of Assumption \ref{assum:discs}, then an inspection of the proof below (see in particular the polynomial \eqref{eq:design.q}) shows that \eqref{assum:quad.qr} only has to be assumed for $q$ belonging to the smaller space $P_K \Poly{l}$. This is similar to what has been noticed in \cite{GMV}, in the context of mass-lumped $\Poly{k}$ Finite Elements for linear equations.
\end{remark}

\begin{remark}[Rates of convergence]\label{rem:rate.high}
If $\discs$ is the Gradient Discretisation corresponding to conforming $\Poly{k}$ Finite Elements, we have $W_{\discs}\equiv 0$ and, if $\zeta(\bu)\in H^{k+1}(\Omega)$, $\norm[L^2]{\nablaD \ID \zeta(\bu)-\nabla\zeta(\bu)}\lesssim  h^{k}$; see \cite[Proposition 8.11 and Remark 8.12]{gdm}. In this case, \eqref{eq:improved.disc.est} yields an $\mathcal O(h^{\min(s,k)})$ estimate on $\norm[L^2]{\nablaD [\ID \zeta(\bu) - \zeta(u)]}$, which is a drastic improvement over \eqref{eq:est.general} (see Remark \ref{rem:rate.low}).

The same $\mathcal O(h^{\min(s,k)})$ bound on $\norm[L^2]{\nablaD [\ID \zeta(\bu) - \zeta(u)]}$ holds for the Gradient Discretisation corresponding to DG schemes of degree $k$, provided that $\Lambda\nabla\zeta(\bu)\in H^{\min(s,k)}(\Omega)^d$ (see \cite[Lemmas 11.14 and 11.15]{gdm}).
\end{remark}

Before proving Theorem \ref{th:high.order}, we describe in Tables \ref{tab:quadratures.1D} and \ref{tab:quadratures.2D} a few examples of choices of $(\x_i,\pwpart_i\cap K)_{i\in I_K}$ that satisfy \eqref{assum:quad.qr} in dimensions one and two. These rules will be used in the numerical tests in Section \ref{sec:num}, and they assume that the cell $K$ is a simplex (interval if $d=1$, triangle if $d=2$). Note that some of these rules are sub-optimal in terms of degree of exactness vs.\ number of quadrature points; they will serve to illustrate the optimality of Theorem \ref{th:high.order}.

\begin{table}[ht]
\resizebox{\textwidth}{!}{
\setlength\extrarowheight{3pt}
\begin{tabular}{|c|c|c|c|c|}
\hline
Name & $(\x_i)_{i\in I_K}$ & $(|\pwpart_i\cap K|)_{i\in I_K}$ & DOE & Illustration\\[.2em]
\hline
Trapezoidal & $(a,b)$ & $(\frac12 |K|,\frac12 |K|)$ & 1 & \includegraphics[width=0.3\linewidth]{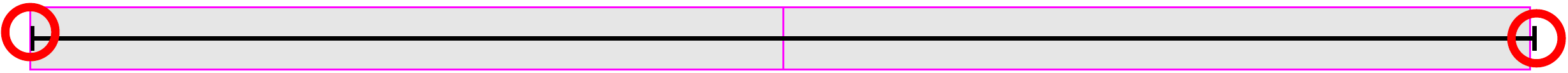}\\[.2em]
\hline
Simpson & $(a,\frac{a+b}{2},b)$ & $(\frac16 |K|, \frac23 |K|, \frac16 |K|)$ & 3 &\includegraphics[width=0.3\linewidth]{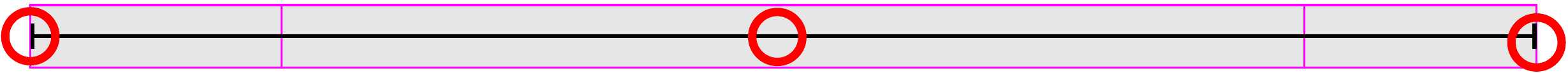}\\[.2em]
\hline
Equi6 & $(a,\frac{2a+b}{3},\frac{a+2b}{3},b)$ &  $(\frac16 |K|, \frac13 |K|, \frac13|K|, \frac16 |K|)$ & 1&\includegraphics[width=0.3\linewidth]{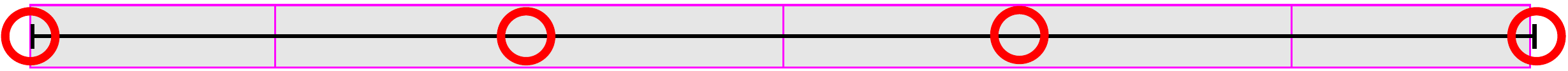}\\[.2em]
\hline
Equi8 & $(a,\frac{2a+b}{3},\frac{a+2b}{3},b)$ &  $(\frac18 |K|, \frac38 |K|, \frac38|K|, \frac18 |K|)$ & 3&\includegraphics[width=0.3\linewidth]{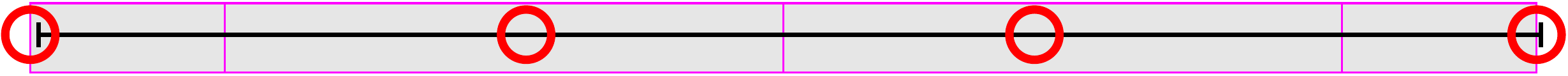}\\[.2em]
\hline
Gauss--Lobatto & $(a,\frac{5+\sqrt{5}}{10}a+\frac{5-\sqrt{5}}{10}b,\frac{5-\sqrt{5}}{10}a+\frac{5+\sqrt{5}}{10}b,b)$ & $(\frac{1}{12}|K|,\frac{5}{12}|K|,\frac{5}{12}|K|,\frac{1}{12}|K|)$ & 5&\includegraphics[width=0.3\linewidth]{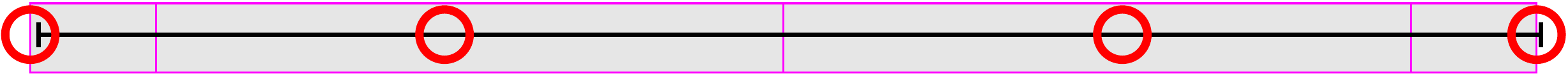}\\[.2em]
\hline
\end{tabular}}

\caption{Examples of quadrature rules satisfying \eqref{assum:quad.qr} in dimension $d=1$, with $K=(a,b)$. DOE stands for degree of exactness, and corresponds to $k+\ell$ in \eqref{assum:quad.qr}. In the illustrations, the circles represent the nodes $\x_i$ and the sets $\pwpart_i\cap K$ are the intervals delimited by vertical bars.}
\label{tab:quadratures.1D}
\end{table}

\begin{table}
\resizebox{\textwidth}{!}{\begin{tabular}{|c|c|c|c|c|}
\hline
Name & $(\x_i)_{i\in I_K}$ & $(|\pwpart_i\cap K|)_{i\in I_K}$ & DOE & Illustration\\[.2em]
\hline
Vertex & $(\mathbf{a},\mathbf{b},\mathbf{c})$ & $(\frac13|K|,\frac13|K|,\frac13|K|)$ & 1&\includegraphics[width=0.1\linewidth]{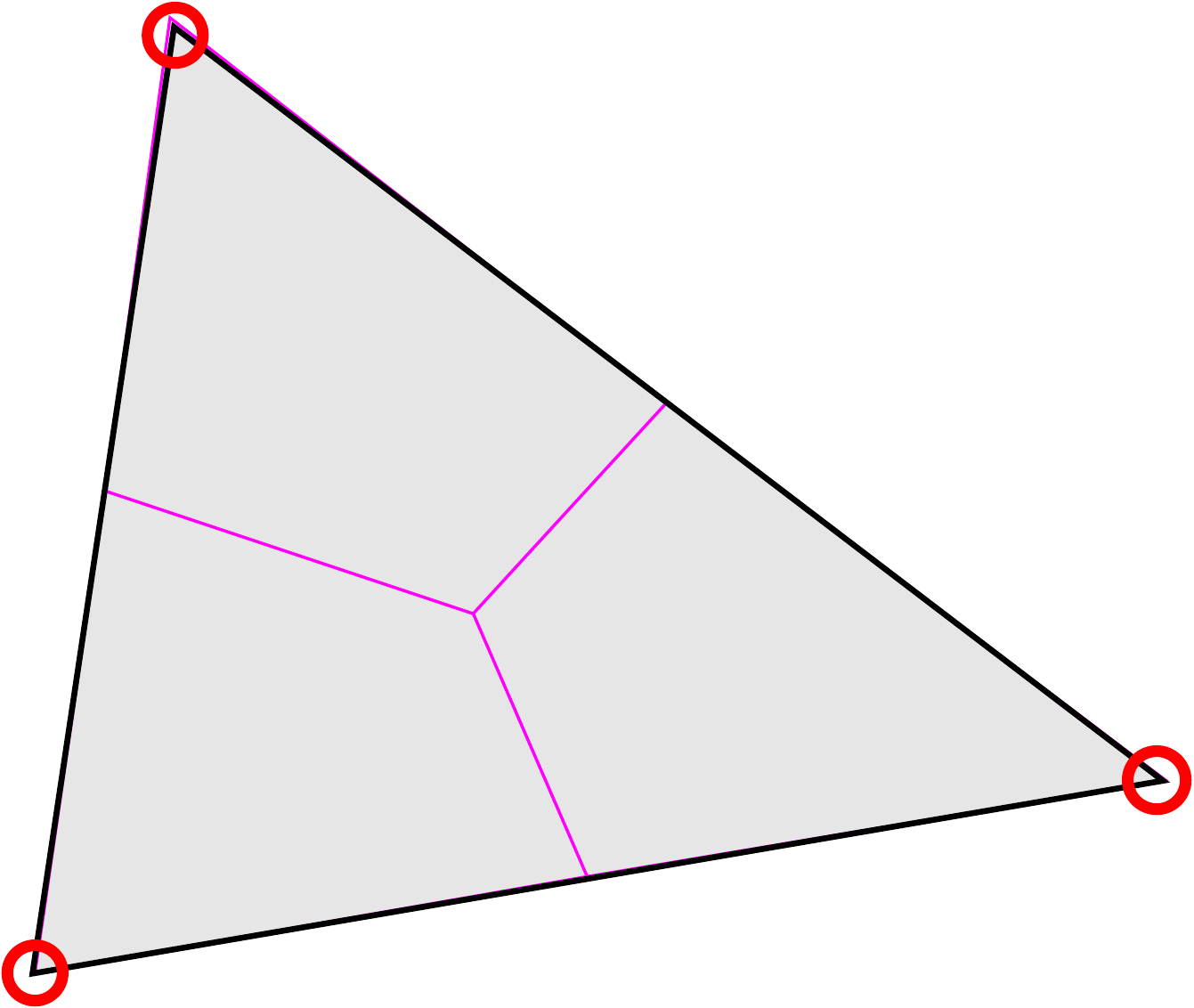}\\[.2em]
\hline
Vertex+Edge Midpoint &  $(\mathbf{a},\mathbf{b},\mathbf{c},\frac{\mathbf{a+b}}{2},\frac{\mathbf{a+c}}{2},\frac{\mathbf{b+c}}{2})$ &
$(0,0,0,\frac13|K|,\frac13|K|,\frac13|K|)$ & 2&\includegraphics[width=0.1\linewidth]{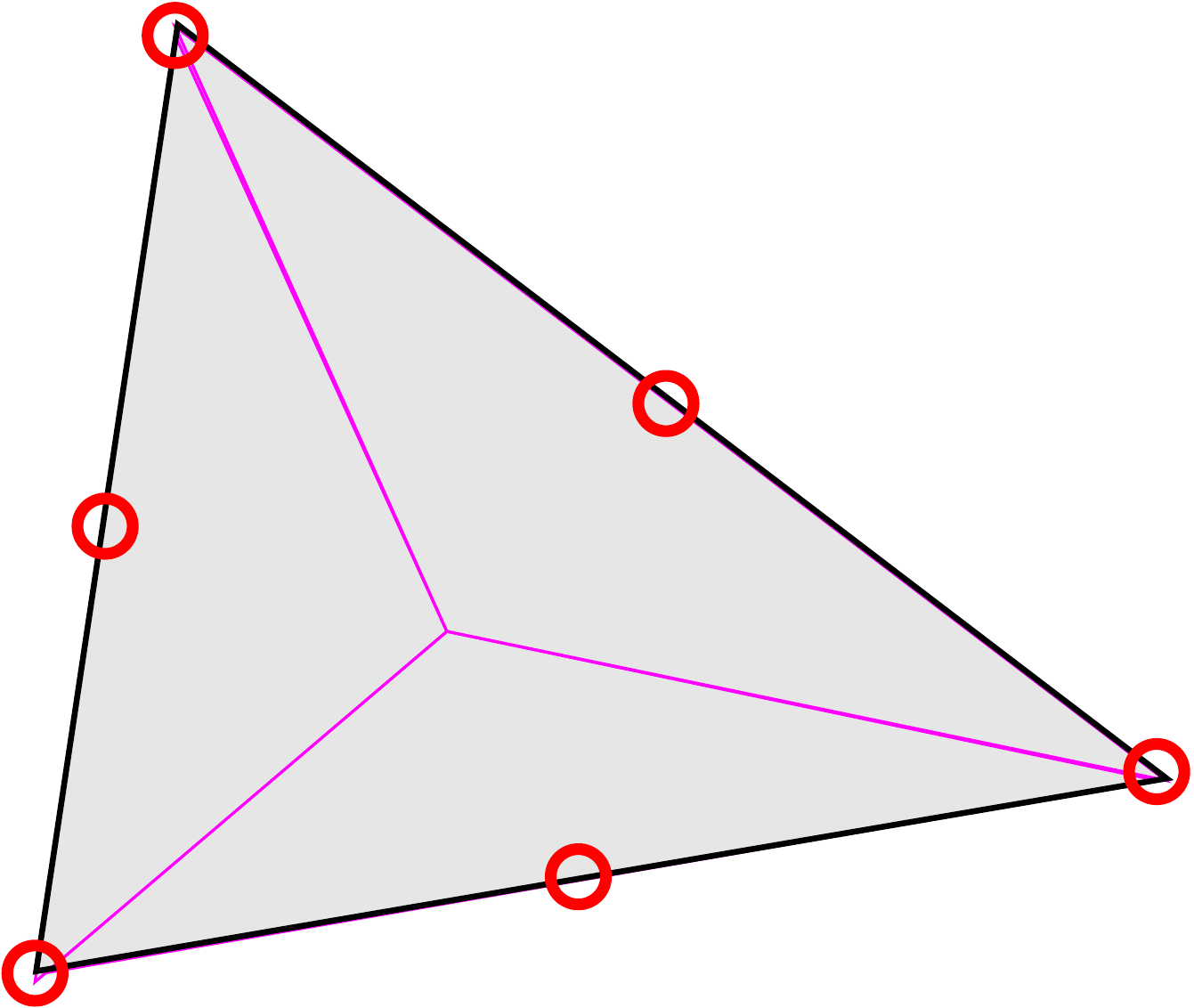}\\[.2em]
\hline
\end{tabular} }
\caption{Examples of quadrature rules satisfying \eqref{assum:quad.qr} in dimension $d=2$, with $K$ triangle $(\mathbf{a},\mathbf{b},\mathbf{c})$. DOE stands for degree of exactness, and corresponds to $k+\ell$ in \eqref{assum:quad.qr}. In the illustrations, the nodes $\x_i$ are the circles and the sets $\pwpart_i\cap K$ are the regions delimited by straight lines.}
\label{tab:quadratures.2D}
\end{table}

\begin{proof}[Proof of Theorem \ref{th:high.order}]
The inequality \eqref{eq:improved.disc.est} follows from Theorem \ref{th:error.est}, estimating in the present context the terms $\term_{\disc}(\bu,u)$ and $R_{\disc,\discs}(\bu,f)$.

\begin{inparaenum}[(i)]
\item \emph{Term $\term_{\disc}(\bu,u)$}. For all $K\in\mesh$, all $i\in I_K$, and all $\x\in\pwpart_i\cap K$, Definition \ref{def:pw} of $\PiD$ and Items \eqref{def:ID} and \eqref{def:QD} in Assumption \ref{assum:discs} imply that
\[
\PiD\ID\zeta(\bu)(\x)=(\ID\zeta(\bu))_i=\zeta(\bu)(\x_i)=(\zeta(\bu))_{|K}(\x_i)=\zeta(\bu_{|K}(\x_i))=
\zeta(\QD\bu(\x)).
\]
Hence, $\PiD\ID\zeta(\bu)=\zeta(\QD\bu)$ and $\term_{\disc}(\bu,u)=0$.

\item \emph{Term $R_{\disc,\discs}(\bu,f)$}. For the sake of brevity, set $g = \beta(\bu)-f$. By definition \eqref{eq:QD.nodes} of $\QD$, we have $\QD g=\beta(\QD \bu)-\QD f$ and thus, to bound $R_{\disc,\discs}(\bu,f)$ above by the last term in \eqref{eq:improved.disc.est}, we have to
establish that, for all $v\in\XDz$,
\begin{equation}\label{eq:gPiDs}
\left| \int_\Omega (\QD g\, \PiD v- g\,\PiDs v)\right|
\lesssim h^{s}(1+C_{\discs})\norm[W^{s,\infty}(\mesh)]{g}\norm[L^2]{\nablaD v}.
\end{equation}
Let $A_{\disc,\discs}(g,v)$ be the integral in the left-hand side of \eqref{eq:gPiDs}. We have
\begin{align}
A_{\disc,\discs}{}&(g,v):=\sum_{K\in\mesh}\left(\sum_{i\in I_K} |U_i\cap K|g_{|K}(\x_i)v_i-\int_K g\,\PiDs v \right)\nonumber\\
={}&\sum_{K\in\mesh}\left(\sum_{i\in I_K} |U_i\cap K|g_{|K}(\x_i)(\PiDs v)_{|K}(\x_i)-\int_K g\,\PiDs v \right)
=\sum_{K\in\mesh}\mathfrak{E}_K(g\PiDs v),
\label{est:RD.1}
\end{align}
where, in the second line, we have used $(\PiDs v)_{|K}(\x_i)=v_i$ (see Item \eqref{nodes} in Assumption \ref{assum:discs}), and we have defined the error in the local quadrature rule on $K$ by
\[
\forall w\in C(\overline{K})\,,\quad\mathfrak{E}_K(w):=\sum_{i\in I_K} |U_i\cap K|w_{|K}(\x_i)-\int_K w.
\]
By \eqref{assum:quad.qr} and a straightforward estimate,
\begin{align}
\label{prop:fIK.exact}
&\forall q\in \Poly{k+\ell}\,,\quad \mathfrak{E}_K(q)=0,\mbox{ and}\\
\label{prop:fIK.bound}
&\forall w\in C(\overline{K})\,,\quad |\mathfrak{E}_K(w)|\le 2|K|\norm[L^\infty(K)]{w}.
\end{align}
For a polynomial degree $r\ge 0$, let $\proj[K]^r:L^2(K)\to \Poly{r}$ denote the $L^2(K)$-orthogonal projector on $\Poly{r}$ and notice that, since $(\PiDs v)_{|K}\in\Poly{k}$ (Item \eqref{PK} in Assumption \ref{assum:discs}) and $k\ge 1$, the function
\begin{equation}\label{eq:design.q}
q:=(\proj[K]^\ell g)(\PiDs v)_{|K} + (\proj[K]^0 (\PiDs v)_{|K})(\proj[K]^{\ell+1}g-\proj[K]^\ell g)
\end{equation}
belongs to $\Poly{\ell+k}+\Poly{0+\ell+1}\subset  \Poly{k+\ell}$.
Using \eqref{prop:fIK.exact} with this $q$ yields
\begin{align*}
\mathfrak{E}_K(g\PiDs v)={}&\mathfrak{E}_K\left(g\PiDs v-(\proj[K]^\ell g)(\PiDs v)_{|K} - (\proj[K]^0 (\PiDs v)_{|K})(\proj[K]^{\ell+1}g-\proj[K]^\ell g)\right)\\
={}&\mathfrak{E}_K\left([g-\proj[K]^\ell g][(\PiDs v)_{|K} - \proj[K]^0 (\PiDs v)_{|K}]+(\proj[K]^0(\PiDs v)_{|K})[g-\proj[K]^{\ell+1}g]\right).
\end{align*}
Invoking then the bound \eqref{prop:fIK.bound} and the straightforward estimate $\norm[L^\infty(K)]{\proj[K]^0 (\PiDs v)_{|K}}\le \norm[L^\infty(K)]{\PiDs v}$, we infer
\begin{equation}
\begin{aligned}
|\mathfrak{E}_K(g\PiDs v)|\le{}& 2 \norm[L^\infty(K)]{g-\proj[K]^\ell g}|K|\norm[L^\infty(K)]{(\PiDs v)_{|K}- \proj[K]^0 (\PiDs v)_{|K}}\\
&+2|K|\norm[L^\infty(K)]{\PiDs v}\norm[L^\infty(K)]{g-\proj[K]^{\ell+1}g}.
\end{aligned}
\label{est:fIK.1}
\end{equation}
Under Item \eqref{assum:rho} of Assumption \ref{assum:discs}, \cite[Lemma 3.4]{DD15} shows that, for any natural numbers $a\ge 0$ and $b\in \{0,\ldots,a+1\}$, and any $w\in W^{b,\infty}(K)$,
\[
\norm[L^{\infty}(K)]{w-\proj[K]^a w}\lesssim h_K^{b}\norm[W^{b,\infty}(K)]{w}.
\]
Applying this estimate with $(a,b,w)=(\ell,\min(s,\ell+1),g)$, $(a,b,w)=(0,1,(\PiDs v)_{|K})$ and $(a,b,w)=(\ell+1,s,g)$, \eqref{est:fIK.1} leads to
\begin{equation*}
\begin{aligned}
|\mathfrak{E}_K(g\PiDs v)|\lesssim{}& h_K^{\min(s,\ell+1)} \norm[W^{\min(s,\ell+1),\infty}(K)]{g}
|K|h_K\norm[L^\infty(K)]{\nabla(\PiDs v)_{|K}}\\
&+|K|\norm[L^\infty(K)]{\PiDs v}h_K^{s}\norm[W^{s,\infty}(K)]{g}.
\end{aligned}
\end{equation*}
The discrete inverse Lebesgue embedding of \cite[Lemma 5.1]{DD15} gives, if $q\in\Poly{k}(K)$,
$|K|\norm[L^\infty(K)]{q}\lesssim |K|^{\frac12}\norm[L^2(K)]{q}$. Applied to $q=(\PiDs v)_{|K}$ and $q=$ components of $\nabla(\PiDs v)_{|K}$, and since $\min(s,\ell+1)+1=\min(s+1,\ell+2)\ge s$, we obtain
\begin{equation*}
|\mathfrak{E}_K(g\PiDs v)|\lesssim h_K^{s} \norm[W^{s,\infty}(K)]{g}|K|^{\frac12}\left(
\norm[L^2(K)]{\nabla(\PiDs v)_{|K}}+\norm[L^2(K)]{\PiDs v}\right).
\end{equation*}
Plugging this estimate into \eqref{est:RD.1}, using a discrete Cauchy--Schwarz inequality on the sums, and recalling Item \eqref{bound.grad} in Assumption \ref{assum:discs}, we obtain
\[
|A_{\disc}(g,v)|\lesssim h^s \norm[W^{s,\infty}(\mesh)]{g}\left(\norm[L^2]{\nablaD v}+\norm[L^2]{\PiDs v}\right).
\]
The estimate \eqref{eq:gPiDs} follows recalling the definition \eqref{def:CD} of $C_{\discs}$. 
\end{inparaenum}
\end{proof}

\section{Numerical illustrations}\label{sec:num}

In this section, we present numerical tests to explore the optimality of the estimate in Theorem \ref{th:high.order}, and the necessity of the condition \eqref{assum:quad.qr} on the chosen quadrature rules. This exploration will be conducted using mass-lumped Finite Elements (FE) and mass-lumped SIPG Discontinuous Galerkin (DG) schemes. As seen in Remark \ref{rem:rate.high}, when $\zeta(\bu)$ is smooth enough, the expected rate of convergence of these schemes is $h^{\min(\ell+2,k)}$, where $k$ is the degree of the underlying FE or DG scheme. We will illustrate through examples that this rate can be optimal, and that if \eqref{assum:quad.qr} is not even satisfied for $\ell=0$ then the rate of convergence falls to $h$ (basic order one convergence for mass-lumped schemes, see \cite[Sections 8.4 and 9.6]{gdm}). This illustration will be performed on both the porous medium equation and the Stefan model, in a variety of situations: with or without forcing term $f$ (the latter being closer to genuine physical models), and also in the case where the right-hand side contains a term $\div F$ (in which case the convergence is hindered by the lack of regularity of $\zeta(\bu)$, see Remark \ref{rem:gWs.cont}).

In the following, each considered mass-lumped Gradient Discretisation $\disc$ shares the same elements $(\XDz,\nablaD,\ID,\QD)$ as the corresponding $\discs$. We therefore start by describing the non-mass-lumped $\discs$, after which, in the context of Assumption \ref{assum:discs}, $\disc$ is completely determined by specifying the particular choices of nodes $(\x_i)_{i\in I_K}$ and weights $(|\pwpart_i\cap K|)_{i\in I_K}$ for each cell $K$, that is, of the local quadrature rules \eqref{assum:quad.qr}. The rules described in Tables \ref{tab:quadratures.1D} and \ref{tab:quadratures.2D} will serve as examples to construct the mass-lumped Gradient Discretisations $\disc$.

\subsection{Setting for the tests}

The convergences are assessed through the following quantities: 
\begin{equation*}
\begin{aligned}
&E_{\beta,\ID}^{\Pi} = \norm[L^2(\Omega)]{\beta(\QD\bu) -  \PiD\beta(u)},&&E_{\zeta,\ID}^{\Pi} = \norm[L^2(\Omega)]{\PiD (\ID \zeta(\bu) -  \zeta(u))},\\
&E_{\zeta,\ID}^{\nabla} = \norm[L^2(\Omega)]{\nablaD( \ID \zeta(\bu) -  \zeta(u))},&&E_{\zeta}^{\nabla} = \norm[L^2(\Omega)]{\nabla \zeta(\bu) -  \nablaD\zeta(u)}.
\end{aligned}
\end{equation*}
measuring approximation errors on $\beta(\QD\bu)$, the interpolation of $\zeta(\bu)$ (for both function and gradient reconstruction), and on $\nabla\zeta(\bu)$ using high-order quadrature rules.
A first order polynomial fit is done on the logarithms of these errors with respect to $-\frac 1 d\log({\rm Card}(I))$, which yields an approximation under the form
\[
 E \simeq C {\rm Card}(I)^{-\alpha/d}.
\]
Our outputs give the numerical values of $C$ and $\alpha$, the latter providing a numerical convergence order with respect to an evaluation of the mesh size (the number of unknowns, ${\rm Card}(I)$, growing linearly with the number of cells).

\medskip

All the 1D and 2D tests refer to the following situations: $\Lambda={\rm Id}$, $\beta = \rm{Id}$, and $\zeta\in \{{\rm Id},\zeta_p,\zeta_s\}$, where the ``porous medium'' function $\zeta_p$ is defined by
\begin{equation*}
\forall s\in\R,\ \zeta_p(s) = \max(s,0)^2,
\end{equation*}
and the ``Stefan'' function $\zeta_s$ is defined by 
\begin{equation*}
\forall s\in\R,\ \zeta_s(s) = \begin{cases} s&\hbox{ if }s<0,\\ 0 &\hbox{ if }0\le s\le 1,\\ s-1&\hbox{ if }1<s.\end{cases}
\end{equation*}
In all the numerical tests, the approximate solution remains numerically bounded. There is therefore no need to re-define $\zeta_p$ on the negative axis in order to explicitly satisfy the super-linear bound in Assumption \eqref{assum:zeta}. Let us now give the complete continuous cases which are approximated below in 1D or in 2D.
The profiles of the corresponding exact solutions are presented in Figure \ref{fig:profiles.1D}.

\tcaseR{tc:casundcinfty}{Regular problem, $f\neq 0$, $F=0$} This problem corresponds to $\zeta = \mathrm{Id}$ (the model is therefore linear, but we still apply the mass-lumping process) and, for $x\in (0,1)$, the source term and solution are given by $f(x) = 4xe^x $ and $\bu(x) = x(1-x)e^x$. 

\tcaseP{tc:casundporfnz}{Porous medium problem, homogeneous Dirichlet BC, $f\neq 0$, $F=0$}
This test is on the porous medium equation, with $\zeta = \zeta_p$. The source term and exact solutions are defined as follows: for $x\in (0,1)$, setting $y_x=\max(x-0.2,0)$ and $z_x=\max(0.8- x,0)$, we take 
\[
f(x) = (y_xz_x)^{3/2}-6y_xz_x(z_x^2-3y_xz_x+y_x^2)\quad\mbox{ and }\quad\bu(x) = (y_xz_x)^{3/2}.
\]

\tcaseP{tc:casundporfz}{Porous medium problem, non-homogeneous Dirichlet BC, $f=0$, $F=0$}
Still taking for $\zeta$ the porous medium function $\zeta = \zeta_p$, this test takes $\bu(x) = \max(x-\frac 1 5,0)^2/12$ for $x\in (0,1)$, which corresponds to the source term $f = 0$, and non-homogeneous Dirichlet boundary conditions are imposed on $\zeta(\bu)$. 

\tcaseS{tc:casundstefnz}{Stefan problem, homogeneous Dirichlet BC, $f\neq 0$, $F=0$}
In this test, the nonlinearity is given by the Stefan-like function $\zeta = \zeta_s$. The source term is given by $f(x) = 3 (\frac 1 2 - g(x))$ for $x\in (0,1)$, where $g(x) = |\frac 1 2-x|$.
To describe $\bu$, we first let $\gamma\in(0,\frac 1 2)$ such that $\bu(x)=f(x)$ for $g(x)\in(\gamma,\frac 1 2)$ and $\bu(x)\ge 1$ for $g(x)\in(0,\gamma)$. The ODE in \eqref{stefan:strong} can then be solved on each sub-interval and gives $\zeta(\bu(x)) = 0$ for $g(x)\in(\gamma,\frac 1 2)$ and, for some $a,b \in\R$,
$\zeta(\bu(x)) = a e^{g(x)} + b e^{-{g(x)}} + 3(\frac 1 2 - {g(x)})-1$ for $g(x)\in(0,\gamma)$.
Hence, $\bu(x) = a e^{g(x)} + b e^{-{g(x)}} + 3(\frac 1 2 - {g(x)})$ for $g(x)\in(0,\gamma)$.
These values $a$, $b$ and $\gamma$ are found by expressing the matching conditions ensuring that $\zeta(\bu)\in H^2(0,1)$ (since $(\zeta(\bu))''=\bu-f\in L^2(0,1)$), namely $\zeta(\frac 1 2\pm\gamma) = 0$ and $\zeta'(\frac 1 2\pm\gamma) = 0$; the symmetry of the problem also imposes $\zeta'(\frac 1 2) = 0$. This leads to the following equations:
\[
 3\left(\frac 1 2 - \gamma\right)-1 + a e^\gamma  + b e^{-\gamma} = 0,\quad -3 + a e^\gamma-b e^{-\gamma} = 0, \quad\mbox{ and }\quad -3 + a-b = 0.
\]
Numerically solving this nonlinear system of equations gives $\gamma\simeq 0.33036$, $a\simeq 1.2545$ and $b\simeq -1.7455$.

\tcaseS{tc:casundstefz}{Stefan problem, non-homogeneous Dirichlet BC, $f =  0$, $F=0$}

As in the previous test, $\zeta = \zeta_s$. The source term is fixed at $f=0$ and, for any $\gamma\in [0,1]$, a solution is given by: 
\[
\bu(x)=\left\{
\begin{array}{ll}
\frac 1 2 ( e^{x-\gamma} + e^{-(x-\gamma)})=\cosh(x-\gamma) &\forall x\in (\gamma,1),\\
0&\forall x\in (0,\gamma).
\end{array}
\right.
\]
Non-homogeneous Dirichlet conditions are imposed at $x=1$ to match the value of $\bu$ there, and the tests are run with $\gamma = \frac 1 3$.

\tcaseS{tc:casundstefhmun}{Stefan problem, homogeneous Dirichlet BC, $f\neq 0$ and $F\neq 0$}
We let $\zeta = \zeta_s$ and
\begin{equation}
 \begin{array}{llll}
 f(x) = 0\,, & F(x) = 4\frac {\sinh(1/4)} {\cosh(1/4)}\,, & \bu(x) = 0 & \quad\forall x\in(0,\frac 1 4);\\
 f(x) = 5\,, & F(x) = 0\,,                                & \bu(x) = 5-4\frac {\cosh(x-1/2)} {\cosh(1/4)}  & \quad\forall x\in(\frac 1 4,\frac 3 4);\\
 f(x) = 0\,, & F(x) = - 4\frac {\sinh(1/4)} {\cosh(1/4)}\,, & \bu(x) = 0 & \quad\forall x\in(\frac 3 4,1).
  \end{array}
\end{equation}

\begin{figure}
\begin{tabular}{ccc}
\includegraphics[width=0.3\linewidth]{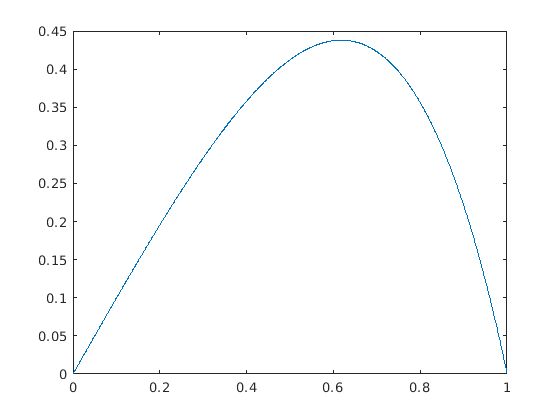} & \includegraphics[width=0.3\linewidth]{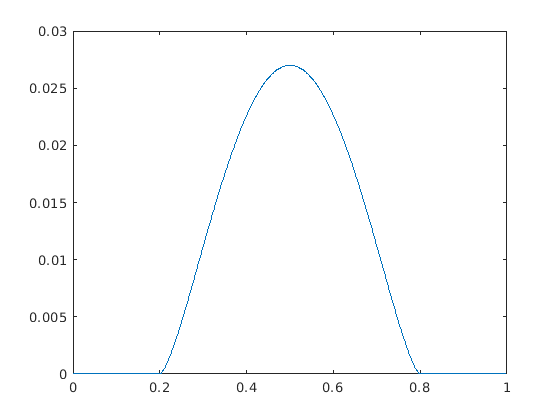} & \includegraphics[width=0.3\linewidth]{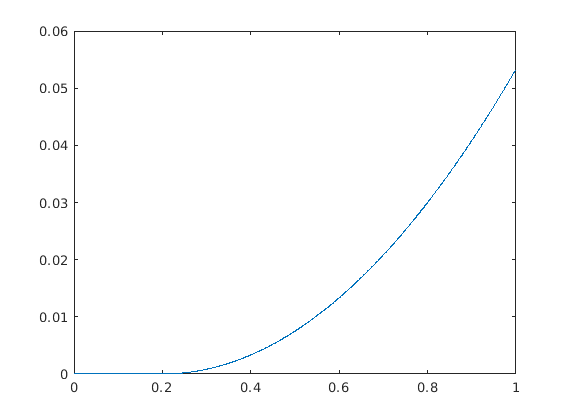}\\
Test case \refR{tc:casundcinfty} & Test case \refP{tc:casundporfnz} & Test case \refP{tc:casundporfz}\\
\includegraphics[width=0.3\linewidth]{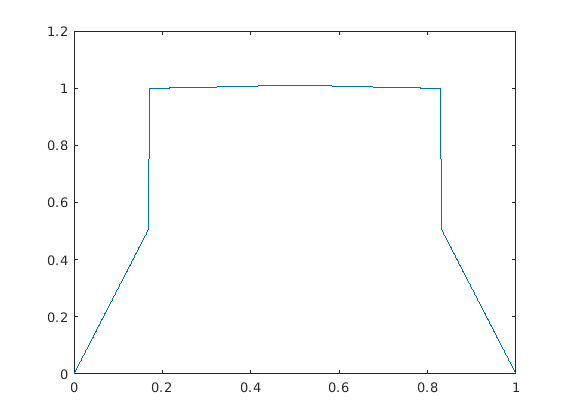} & \includegraphics[width=0.3\linewidth]{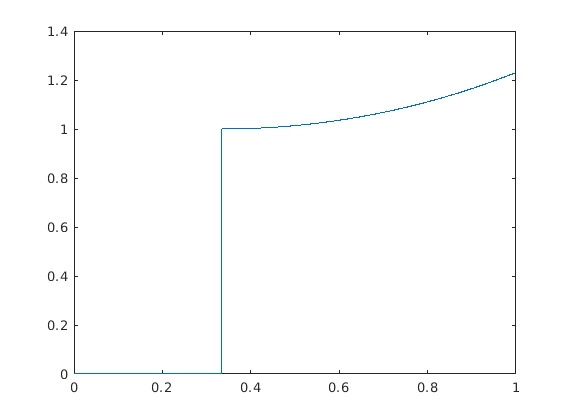} & \includegraphics[width=0.3\linewidth]{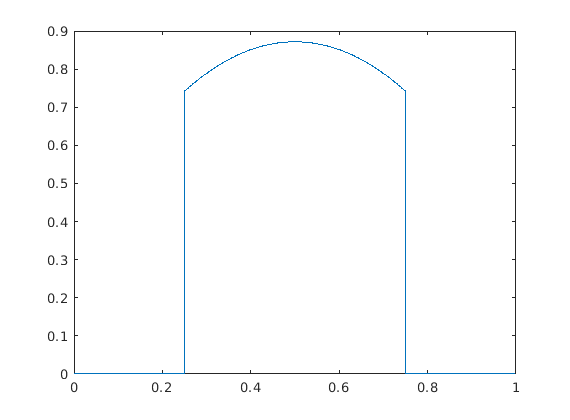}\\
Test case \refS{tc:casundstefnz} & Test case \refS{tc:casundstefz} & Test case \refS{tc:casundstefhmun}
\end{tabular}
\caption{Profiles of the various exact solutions for the 1D tests.}
\label{fig:profiles.1D}
\end{figure}

\subsection{Mass-lumped Finite Elements}\label{sec:EF}
  
For a  conforming simplicial mesh $\mesh$ and using the notations in Assumption \ref{assum:discs}, the Gradient Discretisations $\discs=\discs[k,\fe]$, for $k\in\{1,2,3\}$, corresponding to conforming $\Poly{k}$ Finite Elements are defined by the following elements.
\begin{itemize}[\hspace*{.4em}$\bullet$]
\item The points $(\vec{x}_i)_{i\in I}$ are:
\begin{itemize}[\hspace*{0pt}$\star$]
\item If $k=1$: the mesh vertices (in dimension 1 or 2),
\item If $k=2$: the mesh vertices and one point in each cell (in dimension 1), or the mesh vertices and the edge midpoints (in dimension 2),
\item If $k=3$: the mesh vertices and two points in each cell (in dimension 1), or the mesh vertices, two additional points on each edge, and one point in each cell (in dimension 2).
\end{itemize}
$I_{\partial\Omega}$ is the set of indices $i\in I$ such that $\vec{x}_i\in\partial\Omega$ and, for $K\in\mesh$, $I_K:=\{i\in I\,:\,\vec{x}_i\in\overline{K}\}$.
\item For each simplex $K\in\mesh$ and $v=(v_i)_{i\in I}\in\XDz$, $(\PiDs v)_{|K}$ is the unique polynomial in $\Poly{k}$ that takes the values $v_i$ at $\vec{x}_i$ for all $i\in I_K$.
\item The gradient reconstruction is given by $(\nablaD v)_{|K}=\nabla(\PiDs v)_{|K}$ for all $v\in\XDz$ and $K\in\mesh$.
\end{itemize}

\subsubsection{Numerical tests for mass-lumped Finite Elements in dimension 1}\label{sec:EF1D}

We consider two families of meshes of $\Omega=(0,1)$ with $N$ cells each, for $N\in\{16,32,64,512,1024,2048\}$. The first one is the uniform mesh $\umesh$ with mesh step $h = 1/N$. The second one is a random mesh $\rmesh$ such that each cell has size $h_i = H_i/\sum_j H_j$, where $H_i = (3 + \rho_i)$ with $\rho_i$ following a random uniform law on $(0,1)$. As mentioned above, all the Gradient Discretisations are mass-lumped versions of the corresponding $\Poly{k}$ GD. We describe in Table \ref{tab:DFE.1D} the remaining elements to fully define each GD, that is: the degree $k$ and the chosen quadrature rule, in each cell, using the nomenclature introduced in Table \ref{tab:quadratures.1D}. The nodes of each FE method are the union of the quadrature nodes in each cell; it is easily checked that the chosen quadrature rules always have the correct number of nodes to uniquely define the corresponding $\Poly{k}$ functions in each cell, and the global continuity of the FE space is ensured since all considered quadrature rules include the cell endpoints as nodes.

\begin{table}
\begin{tabular}{|c|c|c|c|}
\hline
Name of GD & Degree $k$  & Quadrature rule & $\ell$\\[.2em]
\hline
\hline
$\disc[\gen][1,\fe]$ & 1   & Trapezoidal & 0\\[.2em]
\hline
\hline
$\disc[\gen][2,\fe]$ & 2  &  Simpson & 1\\[.2em]
\hline
\hline
$\disc[\gen,a][3,\fe]$ & 3  &  Equi6 & --\\[.2em]
\hline
$\disc[\gen,b][3,\fe]$ & 3  &  Equi8 & 0\\[.2em]
\hline
$\disc[\gen,c][3,\fe]$ & 3  &  Gauss--Lobatto & 2\\[.2em]
\hline
\end{tabular}
\caption{Descriptions of the mass-lumped Finite Element GDs in dimension $d=1$. The degree $k$ is that of the local polynomial space, and $\ell$ is the degree in \eqref{assum:quad.qr} for the chosen quadrature rule (`--' means that \eqref{assum:quad.qr} is not even satisfied with $\ell=0$). The subscript $\gen$ can take the values $\unif$, when the considered mesh is uniform, or $\rand$, for random meshes.}
\label{tab:DFE.1D}
\end{table}

\medskip

{\sc Test case \refR{tc:casundcinfty}: Regular problem, $f\neq 0$}
 
The results provided in Table \ref{tab:casundcinfty} show a super-convergence, for $k=1,2$, of the function and gradient reconstruction when quadrature or interpolation are accounted for in the measure of the error: the errors $E_{\beta,\ID}^{\Pi}$, $E_{\zeta,\ID}^{\Pi}$ and $E_{\zeta,\ID}^{\nabla}$ appear to be at least $\mathcal O(h^{k+1})$ (even almost $\mathcal O(h^{k+2})$ for $k=2$ and the function reconstruction errors). The rate for $E_{\zeta}^{\nabla}$ falls to $h^k$ as expected since this is the optimal rate, when using piecewise $\Poly{k}$ polynomials, to approximate a smooth non-polynomial function.
These rates for the approximation of the gradient are actually above those predicted by our analysis: as seen in Table \ref{tab:DFE.1D}, for all these methods we can take $\ell=0$ in Theorem \ref{th:high.order} and thus, following Remark \ref{rem:rate.high}, the expected decay of $E_{\zeta,\ID}^{\nabla}$ is $h^{\min(2,k)}=h^k$. 

Regarding $k=3$, the schemes based on the $\disc[\gen,c][3,\fe]$ variant appear to have a worse rate for the errors $E_{\beta,\ID}^{\Pi}$ and $E_{\zeta,\ID}^{\Pi}$ than $\disc[\gen,a][3,\fe]$ or $\disc[\gen,b][3,\fe]$, but focusing on the constant $C$ we notice that these errors are actually much better. The scheme $\disc[\gen,c][3,\fe]$ also clearly outperforms the other two variants when considering the gradient reconstruction. Focusing on the latter, the convergence rate $\mathcal O(h)$ for $\disc[\gen,a][3,\fe]$ can be explained recalling that this variant does not even satisfy \eqref{assum:quad.qr}; even though $\term_{\disc}(\bu,u)=0$ for this method (see the proof of Theorem \ref{th:high.order}), we do not have any better estimate on $R_{\disc,\discs}(\bu,f)$ than in the proof of Corollary \ref{cor:rates}, which was precisely expected to be $\mathcal O(h)$ (see Remark \ref{rem:rate.low}).

On the contrary, referring again to Table \ref{tab:DFE.1D}, $\disc[\gen,b][3,\fe]$ enables us to take $\ell=0$ in Theorem \ref{th:high.order} and we recover the expected $\mathcal O(h^2)$ estimate on $E_{\zeta,\ID}^{\nabla}$ mentioned in Remark \ref{rem:rate.high}. For $\disc[\gen,c][3,\fe]$ we can even take $\ell=2$ and Table \ref{tab:casundcinfty} clearly shows that this leads to an improved and optimal $\mathcal O(h^3)$ estimate on the gradient (again, something predicted in Remark \ref{rem:rate.high}). 

These results clearly demonstrate that the key factor in choosing a proper mass-lumped version for a high-order scheme is the exactness property \eqref{assum:quad.qr} --- not satisfying this property leads to decreased rates of convergence. They also indicate, at least for $k=3$, the sharpness of the error estimate established in Theorem \ref{th:high.order}.

{
\setlength\extrarowheight{2pt}
\begin{table}
\begin{tabular}{|c||c|c||c|c||c|c||c|c||c|c|}
\hline
 &  \multicolumn{2}{c||}{$E_{\beta,\ID}^{\Pi}$} & \multicolumn{2}{c||}{$E_{\zeta,\ID}^{\Pi}$}& \multicolumn{2}{c||}{$E_{\zeta,\ID}^{\nabla}$}& \multicolumn{2}{c||}{$E_{\zeta}^{\nabla}$}\\
\hline
 GD &  $C$ & $\alpha$ &  $C$ & $\alpha$&  $C$ & $\alpha$&  $C$ & $\alpha$\\
\hline
\hline
$\disc[\unif][1,\fe]$ &  4.6e-01 & 2.00 & 4.6e-01 & 2.00 & 4.4e-01 & 2.00 & 1.3e+00 & 1.00 \\
\hline 
$\disc[\rand][1,\fe]$ &  2.5e-01 & 1.89 & 2.5e-01 & 1.89 & 3.1e-01 & 1.90 & 1.2e+00 & 0.99  \\
\hline 
\hline
$\disc[\unif][2,\fe]$ &  8.8e-02 & 3.83 & 8.8e-02 & 3.83 & 1.4e-01 & 3.00 & 4.4e-01 & 2.00  \\
\hline 
$\disc[\rand][2,\fe]$ &  7.4e-02 & 3.77 & 7.4e-02 & 3.77 & 1.3e-01 & 2.98 & 4.2e-01 & 1.98  \\
\hline 
\hline
$\disc[\unif,a][3,\fe]$ &  1.8e-01 & 2.00 & 1.8e-01 & 2.00 & 1.5e-01 & 1.00 & 1.5e-01 & 1.00  \\
\hline 
$\disc[\rand,a][3,\fe]$ &  2.0e-01 & 2.01 & 2.0e-01 & 2.01 & 1.5e-01 & 1.00 & 1.5e-01 & 1.00  \\
\hline 
$\disc[\unif,b][3,\fe]$ &  9.4e-02 & 3.00 & 9.4e-02 & 3.00 & 2.0e-01 & 2.00 & 2.0e-01 & 2.00  \\
\hline 
$\disc[\rand,b][3,\fe]$ &  9.6e-02 & 2.99 & 9.6e-02 & 2.99 & 2.0e-01 & 1.99 & 2.0e-01 & 1.99 \\
\hline 
$\disc[\unif,c][3,\fe]$ &  6.4e-08 & 1.73 & 6.4e-08 & 1.73 & 2.0e-04 & 2.95 & 7.2e-02 & 3.00  \\
\hline 
$\disc[\rand,c][3,\fe]$ &  9.0e-08 & 1.80 & 9.0e-08 & 1.80 & 2.4e-04 & 2.97 & 7.6e-02 & 3.00  \\
\hline 
\end{tabular}
\caption{Constants and rates for Test Case \refR{tc:casundcinfty}.}
\label{tab:casundcinfty}
\end{table}
}

\medskip

{\sc Test Case \refP{tc:casundporfnz}: Porous medium problem, homogeneous Dirichlet BC, $f\neq 0$}

The results are presented in Table \ref{tab:casundporfnz}.
The functions $f$ and $\bu$ are only piecewise smooth, and the discontinuity of their derivatives is not necessarily aligned with the mesh. As a consequence, Assumption \ref{assum:data.bu} does not hold. Compared to the smooth case studied in Test Case \refR{tc:casundcinfty}, the convergence is overall degraded. However, the rates mostly remain not far from the linear case (especially for gradient approximations), and the main features discussed for the smooth case can also be found here: some errors display super-convergence behaviours (when using quadrature or interpolation of the exact solution), and the rates of convergence drop drastically if the local quadrature rule \eqref{assum:quad.qr} does not hold with a high enough $\ell$.

{
\setlength\extrarowheight{2pt}
\begin{table}
\begin{tabular}{|c||c|c||c|c||c|c||c|c||c|c|}
\hline
 &  \multicolumn{2}{c||}{$E_{\beta,\ID}^{\Pi}$} & \multicolumn{2}{c||}{$E_{\zeta,\ID}^{\Pi}$}& \multicolumn{2}{c||}{$E_{\zeta,\ID}^{\nabla}$}& \multicolumn{2}{c||}{$E_{\zeta}^{\nabla}$}\\
\hline
 GD &  $C$ & $\alpha$ &  $C$ & $\alpha$&  $C$ & $\alpha$&  $C$ & $\alpha$\\
\hline
\hline
$\disc[\unif][1,\fe]$ &  2.3e+02 & 1.68 & 5.6e+00 & 2.01 & 1.2e+01 & 2.00 & 3.2e+00 & 1.00  \\ 
\hline 
$\disc[\rand][1,\fe]$ &  3.4e+02 & 1.71 & 5.3e+00 & 2.00 & 1.2e+01 & 1.98 & 3.4e+00 & 1.01  \\ 
\hline 
\hline
$\disc[\unif][2,\fe]$ &  1.9e+02 & 1.71 & 1.3e+00 & 2.69 & 4.3e+00 & 2.45 & 6.9e+00 & 2.01  \\ 
\hline 
$\disc[\rand][2,\fe]$ &  9.5e+01 & 1.50 & 1.3e+01 & 3.16 & 1.6e+01 & 2.69 & 6.4e+00 & 1.99  \\ 
\hline 
\hline
$\disc[\unif,a][3,\fe]$ &  8.0e+01 & 1.82 & 4.4e-01 & 2.01 & 4.1e-01 & 1.03 & 4.0e-01 & 1.02  \\ 
\hline 
$\disc[\rand,a][3,\fe]$ &  9.2e+01 & 1.74 & 5.2e-01 & 2.03 & 4.2e-01 & 1.03 & 4.2e-01 & 1.03  \\ 
\hline 
$\disc[\unif,b][3,\fe]$ &  8.6e+01 & 1.74 & 2.8e+00 & 2.90 & 2.7e+00 & 1.99 & 2.7e+00 & 1.99  \\ 
\hline 
$\disc[\rand,b][3,\fe]$ &  3.0e+01 & 1.46 & 3.9e+00 & 3.01 & 2.3e+00 & 1.95 & 2.4e+00 & 1.96 \\ 
\hline 
$\disc[\unif,c][3,\fe]$ &  1.7e+01 & 1.41 & 1.0e+00 & 2.92 & 1.2e+00 & 2.42 & 2.7e+00 & 2.41  \\ 
\hline 
$\disc[\rand,c][3,\fe]$ &  3.5e+01 & 1.52 & 7.0e-02 & 2.17 & 2.2e-01 & 1.98 & 1.7e+00 & 2.28  \\ 
\hline 
\end{tabular}
\caption{Constants and rates for Test Case \refP{tc:casundporfnz}.}
\label{tab:casundporfnz}
\end{table}
}

\medskip

{\sc Test Case \refP{tc:casundporfz}: Porous medium problem, non-homogeneous Dirichlet BC, $f=0$}

Table \ref{tab:casundporfz} details the outcomes of this test. The source term is obviously smooth, but the solution is only piecewise smooth. Despite this, the results show that, except for the very small constants previously observed for $\disc[\gen,c][3,\fe]$, the schemes behave here in a very similar way as for the completely smooth situation of Test Case \refR{tc:casundcinfty}. Here again we notice the importance of choosing proper local quadrature rules \eqref{assum:quad.qr} when designing mass-lumped schemes from high-order methods.

{
\setlength\extrarowheight{2pt}
\begin{table}
\begin{tabular}{|c||c|c||c|c||c|c||c|c||c|c|}
\hline
 &  \multicolumn{2}{c||}{$E_{\beta,\ID}^{\Pi}$} & \multicolumn{2}{c||}{$E_{\zeta,\ID}^{\Pi}$}& \multicolumn{2}{c||}{$E_{\zeta,\ID}^{\nabla}$}& \multicolumn{2}{c||}{$E_{\zeta}^{\nabla}$}\\
\hline
 GD &  $C$ & $\alpha$ &  $C$ & $\alpha$&  $C$ & $\alpha$&  $C$ & $\alpha$\\
 \hline 
\hline
$\disc[\unif][1,\fe]$ &  1.2e+01 & 1.99 & 2.2e-01 & 2.00 & 1.9e-01 & 2.00 & 1.3e+00 & 1.00   \\ 
\hline 
$\disc[\rand][1,\fe]$ &  1.5e+01 & 2.02 & 3.9e-01 & 2.09 & 3.8e-01 & 1.97 & 1.4e+00 & 1.01   \\ 
\hline 
\hline
$\disc[\unif][2,\fe]$ &  2.9e+00 & 2.50 & 2.1e-01 & 3.97 & 1.7e-01 & 2.99 & 5.3e-01 & 2.00   \\ 
\hline 
$\disc[\rand][2,\fe]$ &  2.0e+00 & 2.41 & 2.2e-01 & 3.94 & 1.8e-01 & 2.98 & 5.2e-01 & 1.99   \\ 
\hline
\hline 
$\disc[\unif,a][3,\fe]$ &  3.9e+00 & 2.00 & 2.3e-01 & 2.00 & 1.4e-01 & 1.00 & 1.4e-01 & 1.00   \\ 
\hline 
$\disc[\rand,a][3,\fe]$ &  4.1e+00 & 2.00 & 2.4e-01 & 2.00 & 1.5e-01 & 1.00 & 1.5e-01 & 1.00   \\ 
\hline 
$\disc[\unif,b][3,\fe]$ &  3.9e+00 & 2.50 & 1.9e-01 & 3.00 & 2.4e-01 & 2.00 & 2.4e-01 & 2.00   \\ 
\hline 
$\disc[\rand,b][3,\fe]$ &  3.5e+00 & 2.47 & 2.0e-01 & 2.99 & 2.4e-01 & 1.99 & 2.4e-01 & 1.99   \\ 
\hline 
$\disc[\unif,c][3,\fe]$ &  2.7e-01 & 2.40 & 5.2e-07 & 2.33 & 2.7e-04 & 3.10 & 9.9e-02 & 3.00  \\ 
\hline 
$\disc[\rand,c][3,\fe]$ &  1.4e+00 & 2.76 & 2.8e-06 & 2.64 & 1.4e-03 & 3.46 & 1.1e-01 & 3.00  \\ 
\hline 
\end{tabular}
\caption{Constants and rates for Test Case \refP{tc:casundporfz}.}
\label{tab:casundporfz}
\end{table}
}

\medskip

{\sc Test Case \refS{tc:casundstefnz}: Stefan problem, homogeneous Dirichlet BC, $f\neq 0$}

The results for this test case are presented in Table \ref{tab:casundstefnz}. This test case is a much more severe one
than the porous medium case, since the solution $\bu$ is discontinuous. This explains the poor convergence of
$E_{\beta,\ID}^{\Pi}$ for all considered methods. On the contrary, $\zeta(\bu)$ is continuous and $E_{\zeta,\ID}^{\Pi}$ thus
behaves much better, with an order $2$ decay for all schemes. The order of decay of $E_{\zeta,\ID}^{\nabla}$ is also similar for all methods (around $1.6$), except for the GDs $\disc[\gen,a][3,\fe]$ for which it drops to 1; this reduction can be explained, as in the previous case, by recalling that these GDs do not satisfy the local quadrature rules \eqref{assum:quad.qr} even for $\ell=0$.

Based on our previous discussion, we could expect the schemes corresponding to $\disc[\gen,c][3,\fe]$ to have a higher rate of convergence than the other methods, but it should be noted that $\zeta(\bu)$ belongs only to $H^2$, not $H^3$ since $(\zeta(\bu))''=\bu-f$ is discontinuous. This limits the application of Theorem \ref{th:high.order} to $s=2$ (despite $\ell=2$ being a valid choice in this case).

We notice that $E_{\zeta}^{\nabla}$ has a quite poor convergence (or does not seem to converge) on random meshes. Given that the difference between this error and $E_{\zeta,\ID}^{\nabla}$ solely lies in the interpolation error $\norm[L^2]{\nablaD\ID\zeta(\bu)-\nabla\zeta(\bu)}$, this apparently indicates that this interpolation error does not converge on random meshes. It is actually not the case, but for these meshes the regularity factor and maximum size oscillate a lot from one mesh to the other; combined with the low regularity of the solution (which implies an expected slow rate of convergence), this explains that the regression performed on the interpolation errors struggles to capture the correct convergence when considering a finite family of meshes.

{
\setlength\extrarowheight{2pt}
\begin{table}
\begin{tabular}{|c||c|c||c|c||c|c||c|c||c|c|}
\hline
 &  \multicolumn{2}{c||}{$E_{\beta,\ID}^{\Pi}$} & \multicolumn{2}{c||}{$E_{\zeta,\ID}^{\Pi}$}& \multicolumn{2}{c||}{$E_{\zeta,\ID}^{\nabla}$}& \multicolumn{2}{c||}{$E_{\zeta}^{\nabla}$}\\
\hline
 GD &  $C$ & $\alpha$ &  $C$ & $\alpha$&  $C$ & $\alpha$&  $C$ & $\alpha$\\
\hline
\hline
$\disc[\unif][1,\fe]$ &  1.8e+01 & 0.41 & 1.2e+01 & 1.97 & 1.2e+01 & 1.87 & 2.8e+00 & 1.00   \\ 
\hline 
$\disc[\rand][1,\fe]$ &  3.7e+01 & 0.54 & 1.6e+01 & 2.15 & 3.2e+00 & 1.66 & 2.6e-01 & -0.17   \\ 
\hline 
\hline
$\disc[\unif][2,\fe]$ &  6.0e+01 & 0.76 & 1.1e+00 & 2.04 & 6.2e-01 & 1.54 & 2.5e+00 & 1.61   \\ 
\hline 
$\disc[\rand][2,\fe]$ &  3.2e+01 & 0.50 & 1.2e+00 & 2.04 & 1.0e+00 & 1.65 & 3.3e-03 & -0.16  \\ 
\hline 
\hline
$\disc[\unif,a][3,\fe]$ &  7.9e+01 & 0.84 & 1.2e+00 & 2.03 & 3.7e-01 & 1.03 & 4.4e-01 & 1.06  \\ 
\hline 
$\disc[\rand,a][3,\fe]$ &  1.0e+02 & 0.83 & 1.2e+00 & 2.15 & 3.8e-01 & 1.05 & 3.6e-02 & 0.28  \\ 
\hline 
$\disc[\unif,b][3,\fe]$ &  8.6e+01 & 0.84 & 3.8e-01 & 1.95 & 7.2e-01 & 1.61 & 8.9e-01 & 1.53  \\ 
\hline 
$\disc[\rand,b][3,\fe]$ &  5.1e+01 & 0.64 & 3.4e-01 & 1.84 & 2.8e-01 & 1.53 & 1.4e+03 & 2.77   \\ 
\hline 
$\disc[\unif,c][3,\fe]$ &  5.4e+01 & 0.67 & 4.6e-01 & 2.08 & 3.6e-01 & 1.58 & 8.5e-01 & 1.56   \\ 
\hline 
$\disc[\rand,c][3,\fe]$ &  5.1e+01 & 0.61 & 2.9e+00 & 2.41 & 3.9e-01 & 1.59 & 4.7e-01 & 0.37  \\ 
\hline 
\end{tabular}
\caption{Constants and rates for Test Case \refS{tc:casundstefnz}.}
\label{tab:casundstefnz}
\end{table}
}

For this test case where the singularities of $(\zeta(\bu))''$ are located at specific points (namely, the discontinuities $\frac 1 2 \pm \gamma$ of $\bu$), it is interesting to consider the errors far from these points. Table \ref{tab:casundstefnzexcl} presents the regression data when the errors are computed excluding the two intervals of length $2/10$ around $\frac12 \pm \gamma$. We observe that $\disc[\gen,b][3,\fe]$ and $\disc[\gen,c][3,\fe]$ then yield a second order rate of convergence for $E_{\zeta,\ID}^{\nabla}$ (at least on uniform meshes), which is an improvement over the rate $h^{1.6}$ obtained when errors are computed over the entire domain (Table \ref{tab:casundstefnz}). We however do not recover a full $h^3$ rate of convergence for the methods $\disc[\gen,c][3,\fe]$, a sign that the localised lack of regularity of $\zeta(\bu)$ impacts the errors over the entire domain (which is expected for an elliptic equation with an infinite propagation speed). Noticeably, the variants $\disc[\gen,a][3,\fe]$ still only provide an order one rate of convergence, which is not surprising since the accuracy of these schemes is limited not by a lack of regularity of the solution, but by an improper choice of quadrature rules, which impacts the error on the entire domain.

{
\setlength\extrarowheight{2pt}
\begin{table}
\begin{tabular}{|c||c|c||c|c||c|c||c|c||c|c|}
\hline
 &  \multicolumn{2}{c||}{$E_{\beta,\ID}^{\Pi}$} & \multicolumn{2}{c||}{$E_{\zeta,\ID}^{\Pi}$}& \multicolumn{2}{c||}{$E_{\zeta,\ID}^{\nabla}$}& \multicolumn{2}{c||}{$E_{\zeta}^{\nabla}$}\\
\hline
 GD &  $C$ & $\alpha$ &  $C$ & $\alpha$&  $C$ & $\alpha$&  $C$ & $\alpha$\\
\hline
\hline
$\disc[\unif,a][3]$ &  1.2e+00 & 2.03 & 1.2e+00 & 2.03 & 3.0e-01 & 1.04 & 3.0e-01 & 1.04  \\ 
\hline 
$\disc[\rand,a][3]$ &  1.1e+00 & 2.21 & 1.1e+00 & 2.21 & 2.7e-01 & 1.03 & 1.9e-02 & 0.19  \\ 
\hline 
$\disc[\unif,b][3]$ &  3.3e-01 & 1.96 & 3.3e-01 & 1.96 & 1.5e+00 & 2.00 & 1.2e+00 & 1.94  \\ 
\hline 
$\disc[\rand,b][3]$ &  6.4e-01 & 2.08 & 6.4e-01 & 2.08 & 4.3e-01 & 1.75 & 7.1e-02 & 0.70  \\ 
\hline 
$\disc[\unif,c][3]$ &  3.9e-01 & 2.09 & 3.9e-01 & 2.09 & 1.1e-02 & 2.10 & 7.1e-05 & 0.49  \\ 
\hline 
$\disc[\rand,c][3]$ &  2.0e-01 & 1.88 & 2.0e-01 & 1.88 & 6.8e-01 & 2.12 & 3.2e-07 & -2.04  \\ 
\hline 
\end{tabular}
\caption{Constants and rates for Test Case \refS{tc:casundstefnz}, with errors computed excluding the discontinuities.}
\label{tab:casundstefnzexcl}
\end{table}
}

% 
% \begin{figure}[ht!]
% \resizebox{.6\textwidth}{!}{\includegraphics[trim={2cm 7.5cm 2cm 7cm},clip]{erreurS1.pdf}}
% \caption{ Test Case \refS{tc:casundstefnz}: $\log(E_{\zeta,\ID}^{\nabla})$ vs.\ $\log(h)$, excluding discontinuities.}
% \label{figcomperreurS1}\end{figure}

\medskip

{\sc Test Case \refS{tc:casundstefz}: Stefan problem, non-homogeneous Dirichlet BC, $f =  0$}

The results, presented in Table \ref{tab:casundstefz}, are comparable to those
obtained with a non-zero source term in Test Case \refS{tc:casundstefnz} (reduced convergence for $\disc[\gen,a][3,\fe]$, limitation of the convergence for $\disc[\gen,c][3,\fe]$ due to the limited regularity of $\zeta(\bu)$). In this case, however, the gradient $\nablaD\ID\zeta(\bu)$ of the interpolate seems to enjoy better convergence property even on random meshes, which preserve a reasonable convergence of $E_{\zeta}^{\nabla}$.

{
\setlength\extrarowheight{2pt}
\begin{table}
\begin{tabular}{|c||c|c||c|c||c|c||c|c||c|c|}
\hline
 &  \multicolumn{2}{c||}{$E_{\beta,\ID}^{\Pi}$} & \multicolumn{2}{c||}{$E_{\zeta,\ID}^{\Pi}$}& \multicolumn{2}{c||}{$E_{\zeta,\ID}^{\nabla}$}& \multicolumn{2}{c||}{$E_{\zeta}^{\nabla}$}\\
\hline
 GD &  $C$ & $\alpha$ &  $C$ & $\alpha$&  $C$ & $\alpha$&  $C$ & $\alpha$\\
\hline 
\hline
$\disc[\unif][1,\fe]$ &  2.0e+00 & 0.50 & 2.6e-01 & 1.98 & 1.5e-01 & 1.48 & 7.7e-01 & 1.00  \\ 
\hline 
$\disc[\rand][1,\fe]$ &  4.7e+00 & 0.67 & 5.2e-02 & 1.78 & 3.8e-02 & 1.32 & 7.6e-01 & 1.00  \\ 
\hline
\hline 
$\disc[\unif][2,\fe]$ &  2.3e+00 & 0.49 & 1.2e-01 & 2.02 & 8.6e-02 & 1.50 & 2.0e-01 & 1.50   \\ 
\hline 
$\disc[\rand][2,\fe]$ &  2.4e+00 & 0.53 & 1.6e-01 & 2.13 & 1.0e-01 & 1.61 & 2.1e-01 & 1.51   \\ 
\hline
\hline 
$\disc[\unif,a][3,\fe]$ &  3.4e+00 & 0.50 & 9.3e-02 & 2.00 & 8.9e-02 & 1.01 & 9.2e-02 & 1.01  \\ 
\hline 
$\disc[\rand,a][3,\fe]$ &  2.9e-02 & -0.21 & 6.8e-02 & 1.94 & 8.5e-02 & 1.00 & 8.8e-02 & 1.00   \\ 
\hline 
$\disc[\unif,b][3,\fe]$ &  4.1e+00 & 0.53 & 5.6e-02 & 2.03 & 8.0e-02 & 1.50 & 1.1e-01 & 1.50   \\ 
\hline 
$\disc[\rand,b][3,\fe]$ &  1.7e+01 & 1.10 & 1.5e-01 & 2.28 & 1.8e-01 & 1.75 & 9.3e-02 & 1.51  \\ 
\hline 
$\disc[\unif,c][3,\fe]$ &  3.1e+00 & 0.50 & 4.9e-02 & 2.01 & 5.3e-02 & 1.49 & 9.3e-02 & 1.50  \\ 
\hline 
$\disc[\rand,c][3,\fe]$ &  3.1e+00 & 0.71 & 6.0e-02 & 2.16 & 7.8e-02 & 1.78 & 6.1e-02 & 1.52  \\ 
\hline 
\end{tabular}
\caption{Constants and rates for Test Case \refS{tc:casundstefz}.}
\label{tab:casundstefz}
\end{table}
}

\medskip
 
{\sc Test Case \refS{tc:casundstefhmun}: Stefan problem, homogeneous Dirichlet conditions, $f\neq 0$ and $F\neq 0$}

The term $-\int_\Omega F\cdot \nablaD v$ in the Gradient Scheme \eqref{stefan:gs} is exactly computed, without numerical quadrature, using the relation 
\[
 -\int_\Omega F\cdot \nablaD v=-\int_0^1 F(s) (\PiDs v)'(s) ds = -4\frac {\sinh(1/4)} {\cosh(1/4)} \left(\PiDs v\left(\frac 1 4\right) + \PiDs v\left(\frac 3 4\right)\right).
\]
The outcome of the test can be seen in Table \ref{tab:casundstefhmun}. We note that these data do not satisfy the assumptions of Theorem \ref{th:high.order}, and no high-order rate can therefore be expected. Actually, the solution displays a very low regularity since $\zeta(\bu)$ only belongs to $H^1$, not even $H^2$. This is represented in the results by the fact that, for each given error and type of mesh (random/uniform), the rates of convergence for all degrees $k$ are in the same range. We notice also that, across the board, the schemes perform better on regular grids rather than random grids.

{
\setlength\extrarowheight{2pt}
\begin{table}
\begin{tabular}{|c||c|c||c|c||c|c||c|c||c|c|}
\hline
 &  \multicolumn{2}{c||}{$E_{\beta,\ID}^{\Pi}$} & \multicolumn{2}{c||}{$E_{\zeta,\ID}^{\Pi}$}& \multicolumn{2}{c||}{$E_{\zeta,\ID}^{\nabla}$}& \multicolumn{2}{c||}{$E_{\zeta}^{\nabla}$}\\
\hline
 GD &  $C$ & $\alpha$ &  $C$ & $\alpha$&  $C$ & $\alpha$&  $C$ & $\alpha$\\
\hline
\hline
$\disc[\unif][1,\fe]$ &  3.8e+01 & 0.50 & 3.5e+01 & 2.01 & 7.7e+00 & 1.49 & 1.2e+00 & 0.71  \\ 
\hline 
$\disc[\rand][1,\fe]$ &  2.3e+01 & 0.42 & 4.0e-01 & 0.81 & 5.8e-01 & 0.82 & 5.7e-01 & 0.34  \\ 
\hline
\hline 
$\disc[\unif][2,\fe]$ &  2.2e+01 & 0.50 & 3.6e+00 & 2.00 & 1.6e+00 & 1.50 & 3.7e-01 & 0.51  \\ 
\hline 
$\disc[\rand][2,\fe]$ &  1.2e+01 & 0.42 & 3.1e-03 & -0.29 & 7.2e-02 & 0.26 & 6.6e-01 & 0.44  \\ 
\hline 
\hline
$\disc[\unif,a][3,\fe]$ &  2.2e+01 & 0.50 & 3.3e+00 & 2.01 & 6.5e-01 & 1.18 & 3.6e-01 & 0.51  \\ 
\hline 
$\disc[\rand,a][3,\fe]$ &  2.9e+00 & 0.17 & 4.7e-03 & -0.03 & 6.3e-02 & 0.14 & 3.6e-01 & 0.35  \\ 
\hline 
$\disc[\unif,b][3,\fe]$ &  1.8e+01 & 0.50 & 2.3e+00 & 2.00 & 1.0e+00 & 1.50 & 3.6e-01 & 0.50  \\ 
\hline 
$\disc[\rand,b][3,\fe]$ &  5.4e+00 & 0.22 & 4.3e-01 & 0.95 & 2.2e-01 & 0.46 & 7.0e-01 & 0.50  \\ 
\hline 
$\disc[\unif,c][3,\fe]$ &  1.5e+01 & 0.50 & 8.8e-01 & 2.00 & 5.7e-01 & 1.50 & 3.5e-01 & 0.50  \\ 
\hline 
$\disc[\rand,c][3,\fe]$ &  5.5e+00 & 0.26 & 9.1e-03 & -0.05 & 8.3e-02 & 0.33 & 6.4e-01 & 0.48  \\ 
\hline 
\end{tabular}
\caption{Constants and rates for Test Case \refS{tc:casundstefhmun}.}
\label{tab:casundstefhmun}
\end{table}
}

\subsubsection{Numerical tests for mass-lumped Finite Elements in dimension 2}\label{sec:EF2D}

In the following 2D cases, we consider the domain $\Omega = (0,1)\times(0,1)$, the polynomial degrees $k=1,2$, and the following meshes (see Figure \ref{fig2Dmesh}):
\begin{enumerate}[\hspace*{.4em}$\bullet$]
 \item Triangular meshes which are as equilateral as possible, with edge length $1/N$ for $N\in\{25,50,100\}$. The Gradient Discretisations on these meshes will have the subscript ``\equil'', e.g., $\disc[\equil][k]$.
 \item Rectangular triangular meshes obtained by splitting $N^2$ squares in 2, for $N\in\{25,50,100\}$. We use the subscript ``\spl'' for these GDs, e.g., $\disc[\spl][k]$.
 \item Random meshes based on the three meshes \texttt{mesh1\_3}, \texttt{mesh1\_4} and \texttt{mesh1\_5} from the FVCA5 benchmark \cite{2Dbench}. The randomness is obtained moving the internal nodes by a uniform random factor, and we use the subscript ``\rand'' for these GDs, such as in $\disc[\rand][k]$.
\end{enumerate}

Based on these meshes, the mass-lumped versions of $\discs[k]$ ($k=1,2$) are described in Table \ref{tab:DFE.2D}. The quadrature rules refer to the rules described in Table \ref{tab:quadratures.2D}, and the nodes of the Finite Element method are the union of the quadrature nodes in all the cells. Note that $\disc[\gen,1/4][1,\fe]$ has degree $k=1$, but has the same unknowns as $\disc[\gen][2,\fe]$, corresponding to $k=2$, on the original mesh. 

\begin{table}
\begin{tabular}{|c|c|c|c|}
\hline
Name of GD & Degree $k$  & Quadrature rule & $\ell$\\[.2em]
\hline
\hline
$\disc[\gen][1,\fe]$ & 1  & Vertex & 0\\[.2em]
\hline
$\disc[\gen][2,\fe]$ & 2  &  Vertex+Edge Midpoint & 0\\[.2em]
\hline
\multicolumn{4}{|l|}{$\disc[\gen,1/4][1,\fe]=\disc[\gen][1,\fe]$ on the submesh described in Fig. \ref{fig:mlP2}.}\\[.2em]
\hline
\end{tabular}
\caption{Descriptions of the mass-lumped Finite Element GDs in dimension $d=2$. The degree $k$ is that of the local  polynomial space, and $\ell$ is the degree in \eqref{assum:quad.qr} for the chosen quadrature rule. 
Here, $\gen\in \{\equil,\spl,\rand\}$ depending if the GD uses the triangular equilateral meshes, the triangular meshes coming from splitting rectangles in two, or the random triangular meshes.}
\label{tab:DFE.2D}
\end{table}

% 
% \cblue{
% \begin{enumerate}[\hspace*{.4em}$\bullet$]
%  \item $\disc[\gen][1,\fe]$: $k=1$, $(\x_i)_{i\in I}$ are the vertices of the mesh and, for all $i\in I$, $\pwpart_i$ is Donald dual cell around $\x_i$. 
% 
% \item $\disc[\gen][2,\fe]$: $k=2$, $(\x_i)_{i\in I}$ are the vertices and midpoints of the edges of the mesh, and, for all $i\in I$,
% 	\begin{itemize}[\hspace*{0em}$\star$]
% 	 \item if $\x_i$ is a vertex of the mesh, $\pwpart_i = \emptyset$,
% 	\item if $\x_i$ is the midpoint of and edge of the mesh, $\pwpart_i$ is the union of one or two triangles obtained joining the edge midpoints with each of the centers of mass of the cells around the edge (see Figure \ref{fig:mlP2}, left)
% \end{itemize}
% \item $\disc[\gen,\di][1,\fe]$: we divide each mesh triangle in four (see Figure \ref{fig:mlP2}, right), and apply the mass-lumped $\Poly{1}$ method on this sub-mesh. This Gradient Discretisation has the same unknowns as $\disc[\gen][2,\fe]$, corresponding to $k=2$ on the original mesh. 
% \end{enumerate}
% }

\begin{figure}[ht!]
\resizebox{\textwidth}{!}{\includegraphics[trim={3cm 1cm 3cm 0},clip]{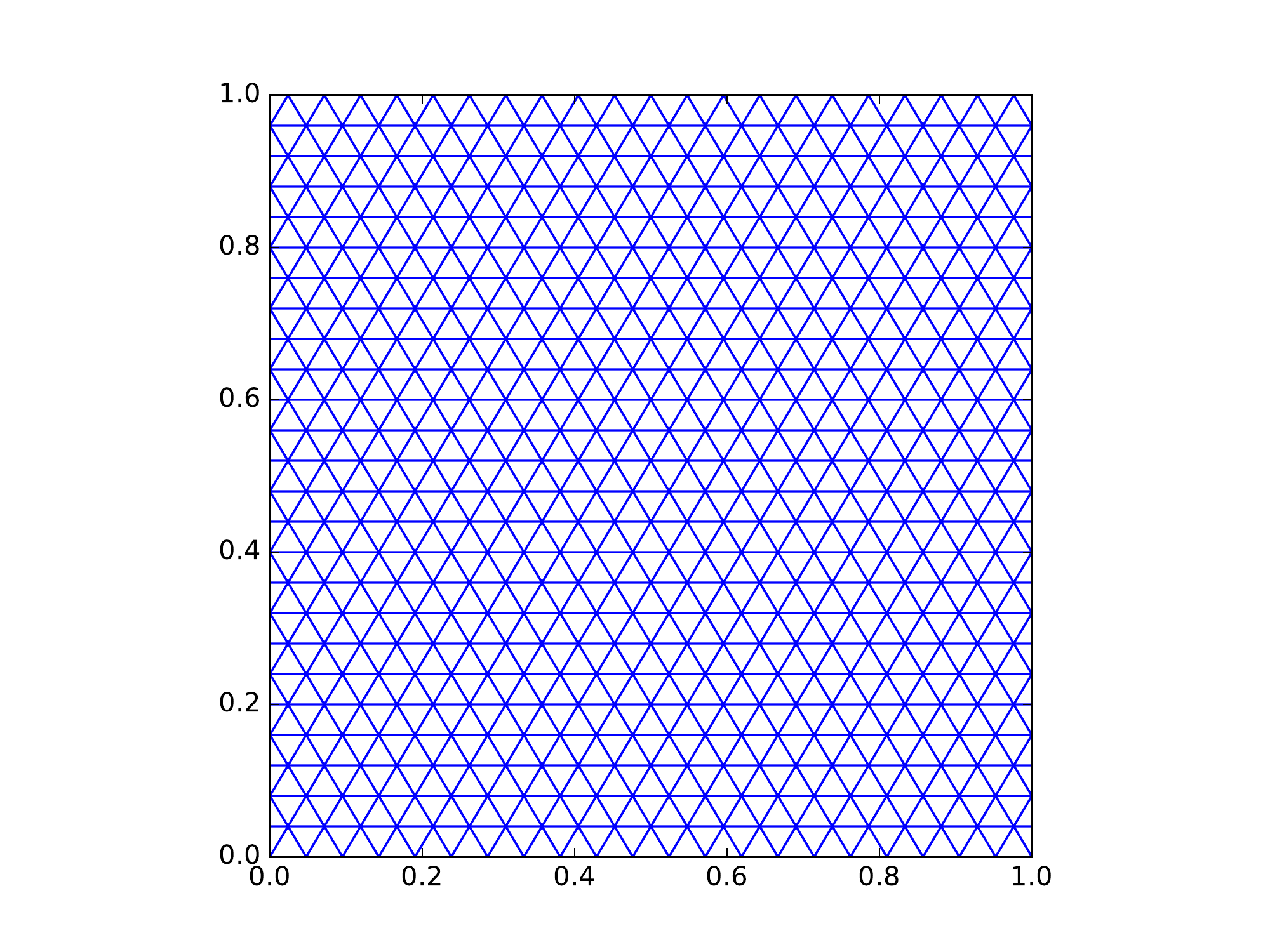}\includegraphics[trim={3cm 1cm 3cm 0},clip]{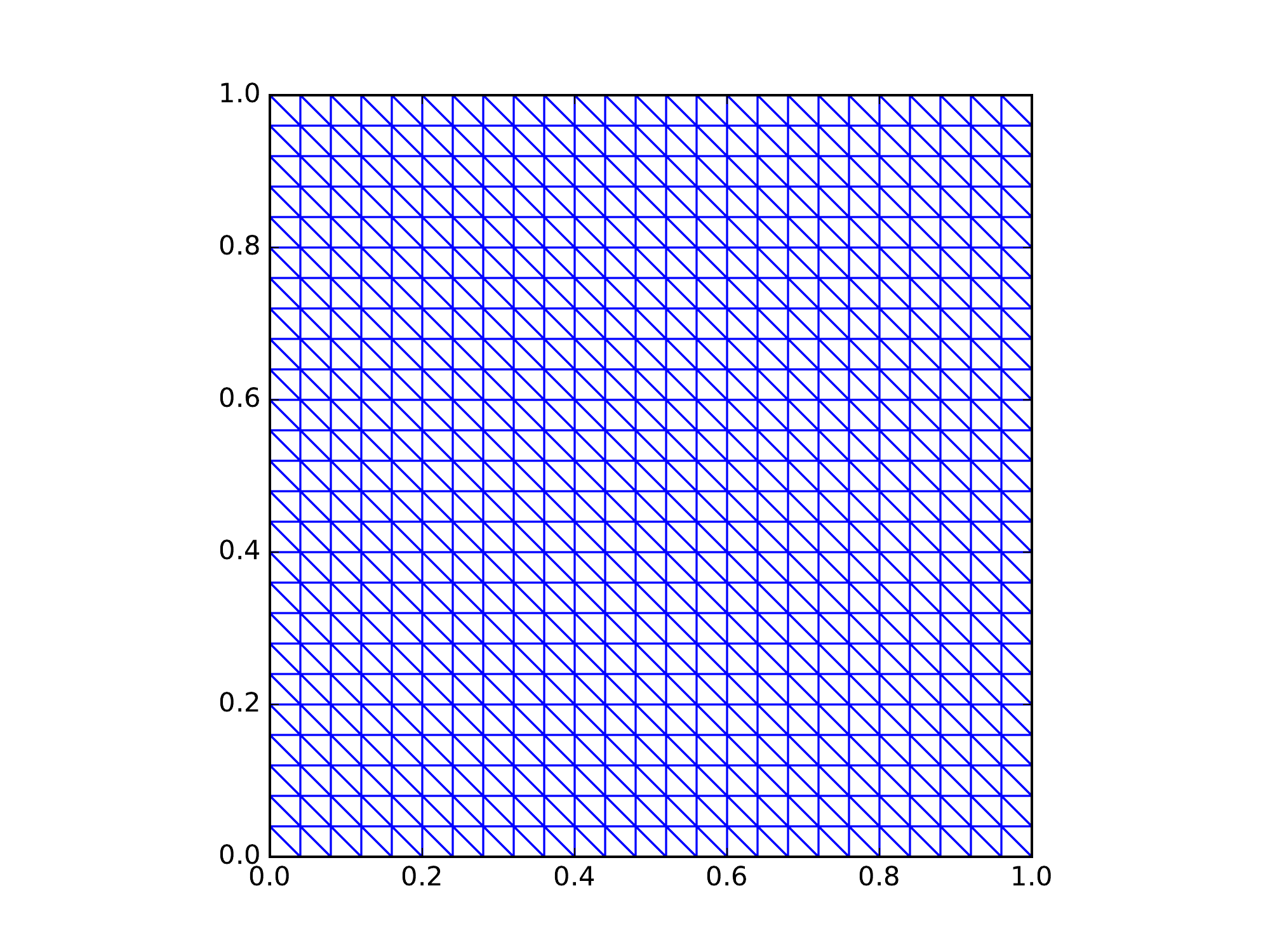}\includegraphics[trim={3cm 1cm 3cm 0},clip]{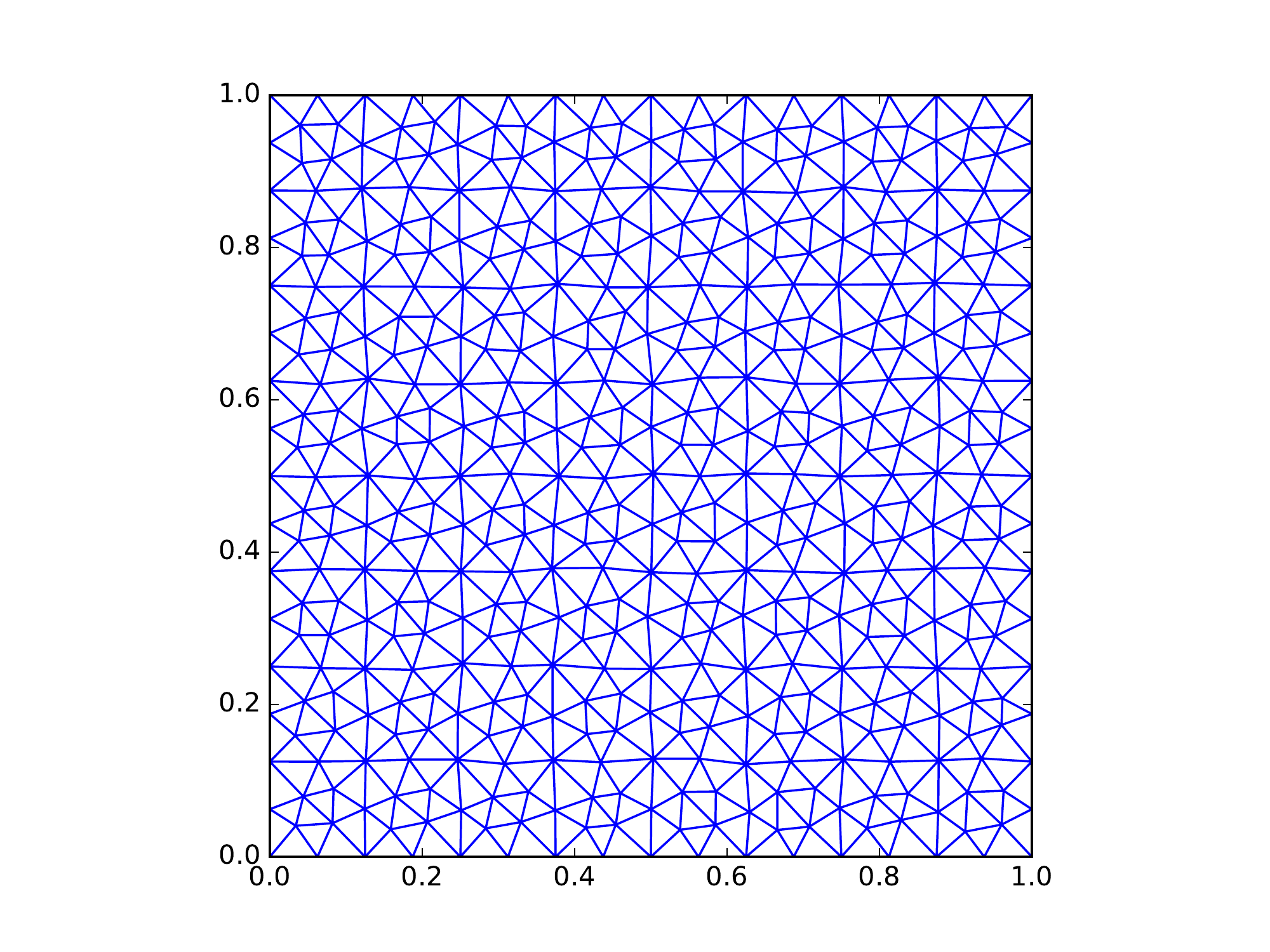}}
\caption{The three types of 2D meshes: ``\equil'' mesh (left), ``\spl'' mesh (middle), ``\rand'' mesh (right).}
\label{fig2Dmesh}\end{figure}

\begin{figure}
\begin{center}
\includegraphics{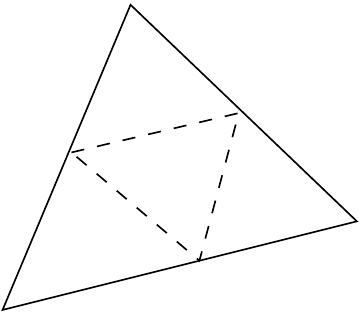}
\end{center}
\caption{
Division of a mesh triangle into four to construct $\disc[\gen,\di][1,\fe]$.}
\label{fig:mlP2}
\end{figure}

\begin{remark}[Implementation for $k=2$]\label{rem:P2ml}
If $k=2$, the function reconstruction obtained by mass-lumping does not see the vertex unknowns ($\pwpart_i=\emptyset$ if $\x_i$ is a mesh vertex). The corresponding mass matrix is therefore singular, which is of course an issue when considering explicit discretisations of time-dependent (even linear) problems; solving this issue requires the usage of enriched $\Poly{2}$ elements \cite{CJRT}. However, in the context of implicit time stepping, or equivalently of stationary problems, this is not an issue since the stiffness matrix is always non-singular.

The case of stationary nonlinear degenerate equations such as \eqref{stefan:strong} requires nonetheless an implementation trick. Since the diffusion term acts on $\zeta(u)$, in the nonlinear iterations the stiffness matrix is multiplied by $\zeta'(u)$ which can vanish, and the diffusion term does not yield in itself a control of all the unknowns $u_i$. It does however enable a control of the unknowns $\zeta(u)_i=\zeta(u_i)$. Even though these unknowns, especially for the Stefan problem, do not determine $u$ entirely, this gives a way to implement the scheme in a non-singular way. Instead of writing an equation on $(u_i)_{i\in I}$, we write an equation on $((u_i)_{i\in I_e},(\zeta(u)_i)_{i\in I_v})$, where $I_e$ is the set of indices corresponding to edge midpoints, and $I_v$ is the set of indices corresponding to the vertices. The unknowns $(u_i)_{i\in I_e}$ are controlled by the mass-lumped reaction term, and the unknowns $(\zeta(u)_i)_{i\in I_v}$ by the diffusion term. This implementation does not entirely determine a solution $u$ to the scheme, only its values at the edge midpoints and the values of $\zeta(u)$ at the vertices, but this is expected given Lemma \ref{lem:exuniq} and Remark \ref{rem:u.not.unique}.
\end{remark}

\begin{remark}[The case $k=3$]
 If $k=3$, it is possible to satisfy the local quadrature rules \eqref{assum:quad.qr} with $\ell=0$ (i.e. to have rules exact for third degree polynomials). This is done fixing $\alpha\in (0,(\frac {3} {44})^{1/2})$ and choosing the nodes $\x_i$ and weight proportions $|\pwpart_i\cap K|/|K|$ as follows:
\begin{enumerate}[\hspace*{.4em}$\bullet$]
\item the vertices of the mesh, each one associated with proportion $\frac {3-44\alpha^2} {60(1-4\alpha^2)}$;
\item two points on each edge located at the barycentric coordinates $(\frac 12 \pm \alpha, \frac 12 \mp \alpha)$ on the edge, associated with proportion $\frac {1} {15(1-4\alpha^2)}$;
\item the centers of mass of the triangles, associated with proportion $9/20$.
\end{enumerate}
To have local quadratures of degree four (that is, \eqref{assum:quad.qr} with $\ell=1$), one must set $\alpha^2 = 1/12$, which leads to the negative weight proportion $-1/60$ at the vertices of the triangle, a situation which is incompatible with the mass-lumping setting. To properly mass-lump the $\Poly{3}$ Finite Elements while preserving their high-order, an enriched version of these elements must be considered \cite{CJRT}.
\end{remark}

The data we consider in the following test case are the same as for the 1D case, using the diagonal as 1D coordinate. For example, if $g$ is a solution or source term for a 1D test case, the solution or source term for the corresponding 2D case is computed by setting $\widetilde{g}(x,y)=g((x+y)/\sqrt{2})$. All these 2D test cases therefore have non-homogeneous Dirichlet boundary conditions.

 \medskip

{\sc Tests with $\disc[\gen][k]$, $k=1,2$}

Tables \ref{tab:casddporfnz}--\ref{tab:casddstefz} present the results for the 2D versions of the Test Cases  \refP{tc:casundporfnz}, \refP{tc:casundporfz}, \refS{tc:casundstefnz} and \refS{tc:casundstefz}, that is: porous medium with $f\neq 0$, porous medium with $f=0$, Stefan problem with $f\neq 0$, and Stefan problem with $f=0$. Plots of solutions for the Stefan problems are given in Figure \ref{figure_t3_2Dm_u} (2D version of Test Case \refS{tc:casundstefnz}, $f\neq 0$) and Figure \ref{figure_t4_2Dm_u} (2D version of Test Case \refS{tc:casundstefz}, $f=0$).

All the considered Gradient Discretisations satisfy the local quadrature rules \eqref{assum:quad.qr} with $\ell=0$. Accordingly, if the solution and source were smooth, rates of convergence for $E_{\zeta,\ID}^{\nabla}$ should be $\mathcal O(h^{k})$ for $k=1,2$ (see Remark \ref{rem:rate.high}). The results show that, for the porous medium case, we are above these rates for all meshes, except for the random mesh --probably more representative of genuine situations-- where we are at these rates (or slightly above). As in the 1D case, the Stefan problem is more challenging and, probably due to the loss of regularity of the solution, the rates are a little bit worse. They do however remain at or above $\mathcal O(h)$ for $k=1$, and only drop to around $\mathcal O(h^{1.5})$ for $k=2$.

{
\setlength\extrarowheight{2pt}
\begin{table}
\resizebox{\textwidth}{!}{
\begin{tabular}{|c||c|c||c|c||c|c||c|c||c|c||c|c|}
\hline
GD &  \multicolumn{2}{c||}{$\disc[\equil][1,\fe]$} & \multicolumn{2}{c||}{$\disc[\equil][2,\fe]$}& \multicolumn{2}{c||}{$\disc[\spl][1,\fe]$} & \multicolumn{2}{c||}{$\disc[\spl][2,\fe]$}& \multicolumn{2}{c||}{$\disc[\rand][1,\fe]$}& \multicolumn{2}{c|}{$\disc[\rand][2,\fe]$}\\
\hline
 &  $C$ & $\alpha$ &  $C$ & $\alpha$ &  $C$ & $\alpha$ &  $C$ & $\alpha$&  $C$ & $\alpha$&  $C$ & $\alpha$\\
\hline
$E_{\beta,\ID}^{\Pi}$    &   9.1e-03 & 2.06  &  2.9e-02 & 2.95   &  5.0e-03 & 2.03   &   3.0e-03 & 2.59   &  2.0e-02 & 2.02   &   8.1e-03 & 2.50    \\
\hline
$E_{\zeta,\ID}^{\Pi}$    &  3.3e-04 & 2.07   &  2.8e-03 & 3.53   &  1.5e-04 & 2.04   &   2.0e-03 & 4.04  &  1.5e-03 & 2.03   &    1.2e-03 & 3.01   \\
\hline
$E_{\zeta,\ID}^{\nabla}$ &  1.4e-03 & 1.51   &   7.7e-03 & 2.52  &  7.5e-04 & 2.04   &   6.9e-03 & 3.02   &  2.0e-03 & 1.02   &    2.8e-03 & 2.02    \\
\hline
\end{tabular}
}
\caption{Constants and rates for the 2D version of Test Case \refP{tc:casundporfnz}.}
\label{tab:casddporfnz}
\end{table}

\begin{table}
\resizebox{\textwidth}{!}{
\begin{tabular}{|c||c|c||c|c||c|c||c|c||c|c||c|c|}
\hline
GD &  \multicolumn{2}{c||}{$\disc[\equil][1,\fe]$} & \multicolumn{2}{c||}{$\disc[\equil][2,\fe]$}& \multicolumn{2}{c||}{$\disc[\spl][1,\fe]$} & \multicolumn{2}{c||}{$\disc[\spl][2,\fe]$}& \multicolumn{2}{c||}{$\disc[\rand][1,\fe]$}& \multicolumn{2}{c|}{$\disc[\rand][2,\fe]$}\\
\hline
&  $C$ & $\alpha$ &  $C$ & $\alpha$ &  $C$ & $\alpha$ &  $C$ & $\alpha$&  $C$ & $\alpha$&  $C$ & $\alpha$\\
\hline
$E_{\beta,\ID}^{\Pi}$    &  1.4e-01 & 1.90  &  8.2e-02 & 1.69  &  4.8e-02 & 1.70   &  3.7e-03 & 1.02   &  3.1e-02 & 1.42  &   9.3e-02 & 1.63    \\
\hline
$E_{\zeta,\ID}^{\Pi}$    &  1.8e-03 & 2.06  &  4.0e-02 & 3.48  & 9.5e-04  & 2.05   &  1.8e-02 & 3.22  &  1.7e-03 & 2.02   &   1.8e-03 & 2.56   \\
\hline
$E_{\zeta,\ID}^{\nabla}$ &  2.5e-02 & 2.03  &  1.2e-01 & 2.52  &  1.3e-02 & 2.01  &   3.9e-02 & 2.38  &  2.6e-03 & 1.18  &     5.4e-02 & 2.29    \\
\hline
\end{tabular}
}
\caption{Constants and rates for the 2D version of Test Case \refP{tc:casundporfz}.}
\label{tab:casddporfz}
\end{table}

\begin{table}
\resizebox{\textwidth}{!}{
\begin{tabular}{|c||c|c||c|c||c|c||c|c||c|c||c|c|}
\hline
GD &  \multicolumn{2}{c||}{$\disc[\equil][1,\fe]$} & \multicolumn{2}{c||}{$\disc[\equil][2,\fe]$}& \multicolumn{2}{c||}{$\disc[\spl][1,\fe]$} & \multicolumn{2}{c||}{$\disc[\spl][2,\fe]$}& \multicolumn{2}{c||}{$\disc[\rand][1,\fe]$}& \multicolumn{2}{c|}{$\disc[\rand][2,\fe]$}\\
\hline
&  $C$ & $\alpha$ &  $C$ & $\alpha$ &  $C$ & $\alpha$ &  $C$ & $\alpha$&  $C$ & $\alpha$&  $C$ & $\alpha$\\
\hline
$E_{\beta,\ID}^{\Pi}$    &  1.3e-01 & 0.45  &  1.9e-01 & 0.52   &  1.7e-01 & 0.57   &  7.6e-02 & 0.32  &  8.3e-02 & 0.38   &    6.0e-02 & 0.29    \\
\hline
$E_{\zeta,\ID}^{\Pi}$    &  1.7e-02 & 2.04  &  4.5e-02 & 2.64  &  4.1e-03 & 1.88   &  4.6e-03 & 1.95 &  1.2e-02 & 1.96   &   2.7e-02 & 2.22   \\
\hline
$E_{\zeta,\ID}^{\nabla}$ &  6.9e-02 & 1.66  &  6.0e-02 & 1.53  &  1.1e-02 & 1.33   &  4.8e-02 & 1.50  &   1.5e-02 & 1.07 &    1.0e-01 & 1.64   \\
\hline
\end{tabular}
}
\caption{Constants and rates for the 2D version of Test Case \refS{tc:casundstefnz}.}
\label{tab:casddstefnz}
\end{table}
}

\begin{figure}[ht!]
\resizebox{\textwidth}{!}{\includegraphics[trim={4cm 1cm 4cm 2cm},clip]{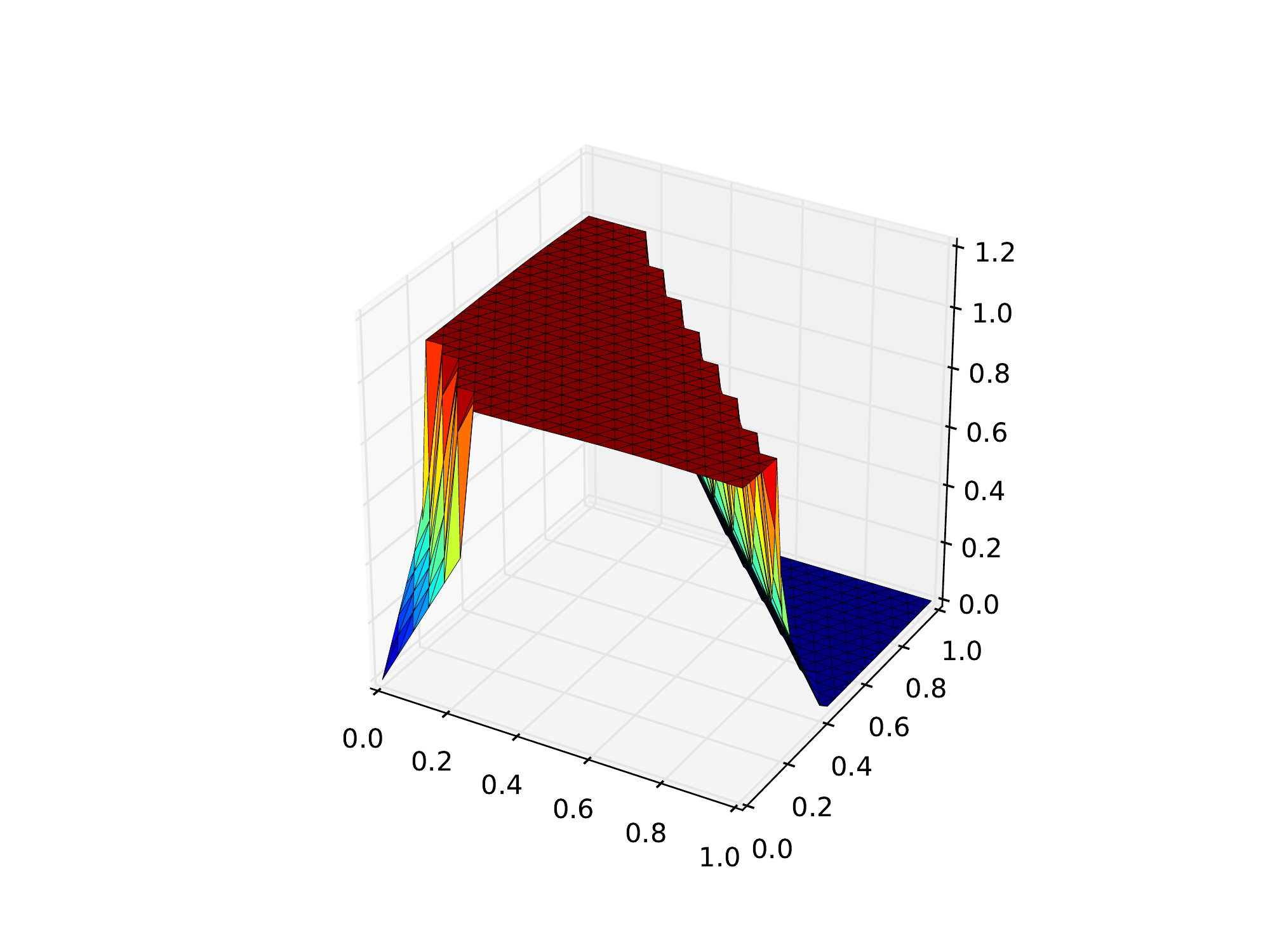}\includegraphics[trim={4cm 1cm 4cm 2cm},clip]{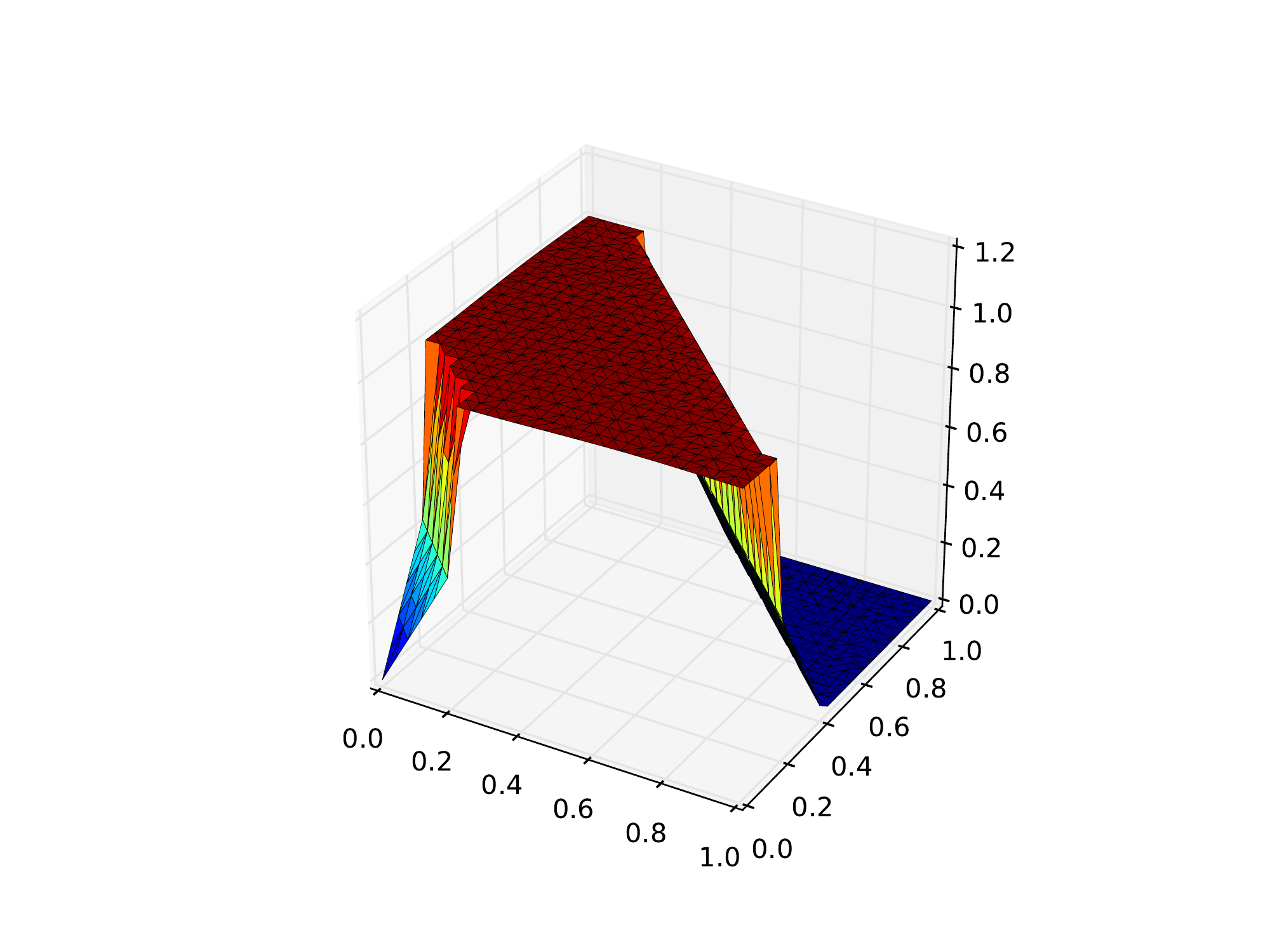}\includegraphics[trim={4cm 1cm 4cm 2cm},clip]{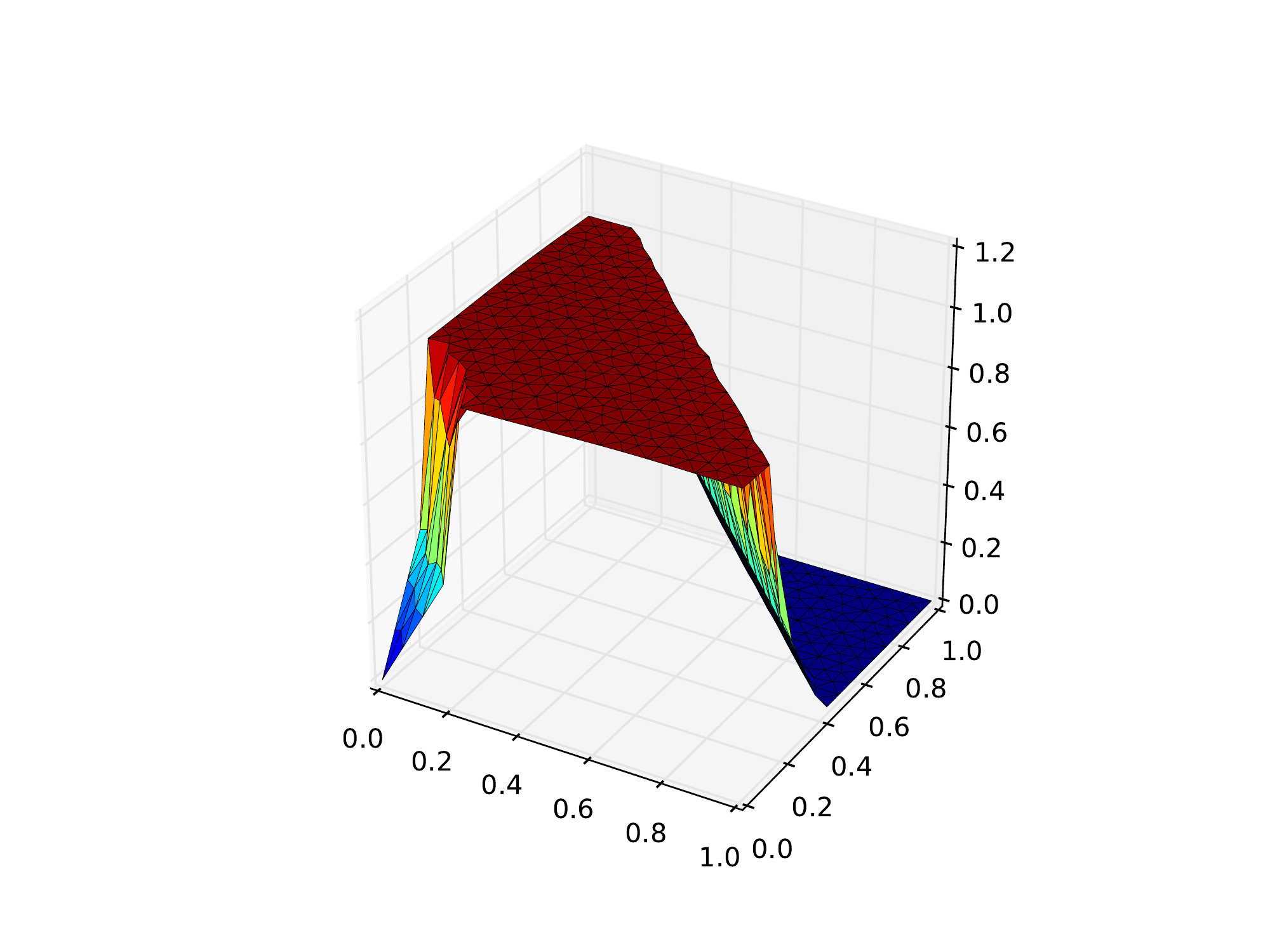}}
\resizebox{\textwidth}{!}{\includegraphics[trim={4cm 1cm 4cm 2cm},clip]{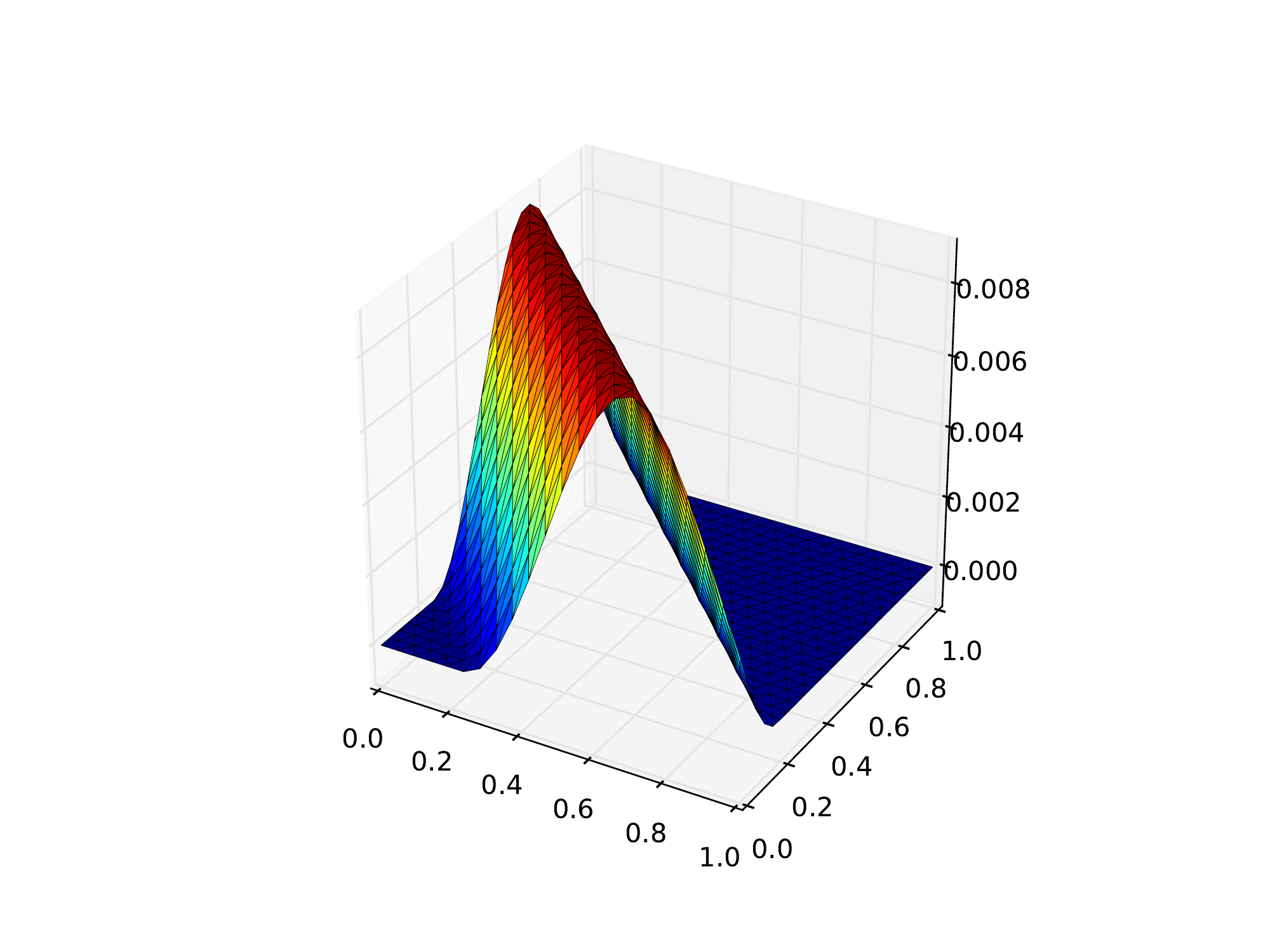}\includegraphics[trim={4cm 1cm 4cm 2cm},clip]{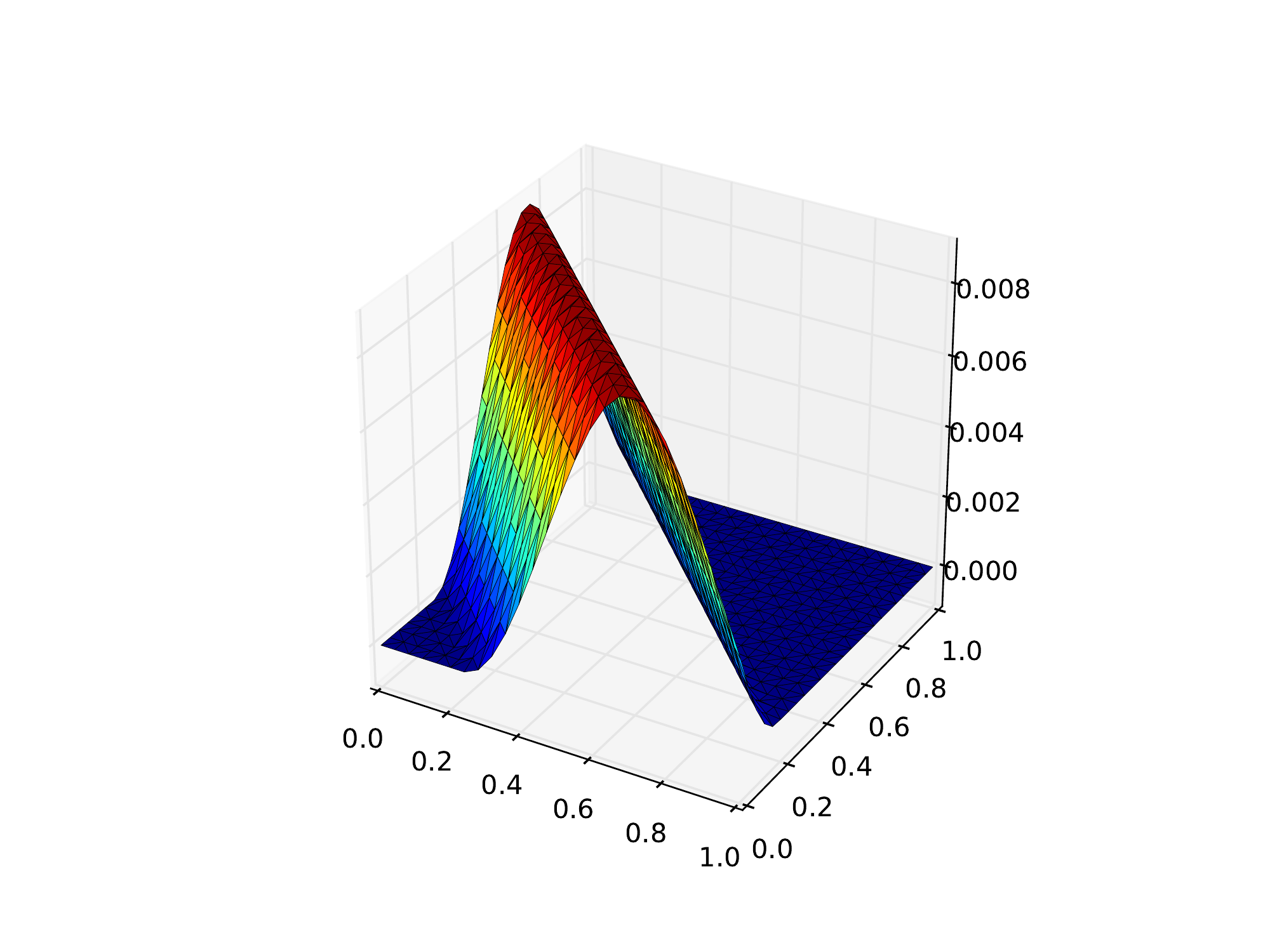}\includegraphics[trim={4cm 1cm 4cm 2cm},clip]{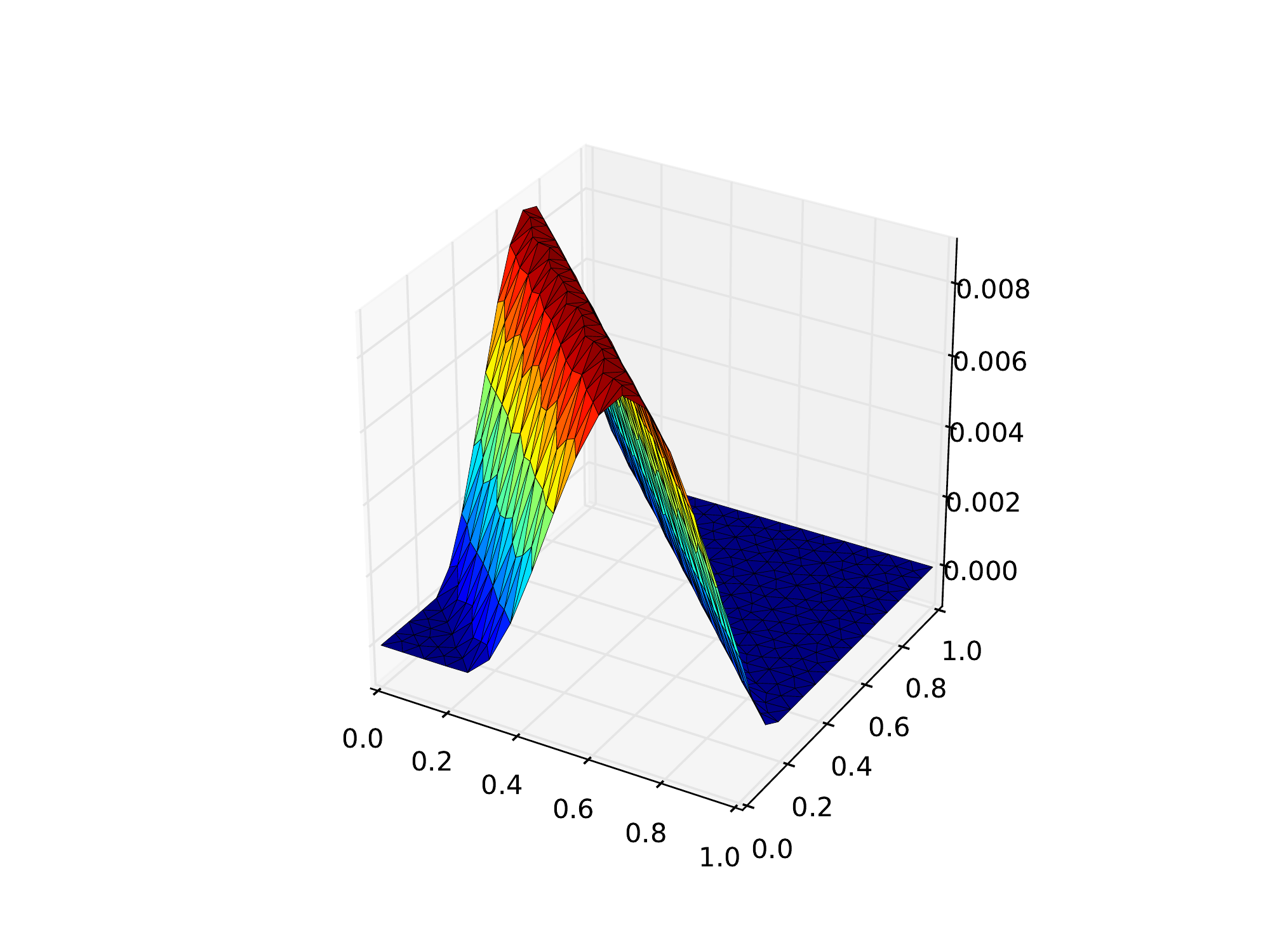}}
\caption{Approximate functions (top: $u$, bottom: $\zeta(u)$) for  the 2D version of Test Case \refS{tc:casundstefnz}. From left to right: $\disc[\equil][1,\fe]$, $\disc[\spl][1,\fe]$, $\disc[\rand][1,\fe]$.}
\label{figure_t3_2Dm_u}\end{figure}

{
\setlength\extrarowheight{2pt}
\begin{table}
\resizebox{\textwidth}{!}{
\begin{tabular}{|c||c|c||c|c||c|c||c|c||c|c||c|c|}
\hline
GD &  \multicolumn{2}{c||}{$\disc[\equil][1,\fe]$} & \multicolumn{2}{c||}{$\disc[\equil][2,\fe]$}& \multicolumn{2}{c||}{$\disc[\spl][1,\fe]$} & \multicolumn{2}{c||}{$\disc[\spl][2,\fe]$}& \multicolumn{2}{c||}{$\disc[\rand][1,\fe]$}& \multicolumn{2}{c|}{$\disc[\rand][2,\fe]$}\\
\hline
&  $C$ & $\alpha$ &  $C$ & $\alpha$ &  $C$ & $\alpha$ &  $C$ & $\alpha$&  $C$ & $\alpha$&  $C$ & $\alpha$\\
\hline
$E_{\beta,\ID}^{\Pi}$    & 2.5e-01 & 0.48   &  4.0e-01 & 0.58  &  8.1e-02 & 0.35   &  5.0e-01 & 0.68  &  1.4e-01 & 0.37   &    7.7e-01 & 0.72    \\
\hline
$E_{\zeta,\ID}^{\Pi}$    &  2.4e-02 & 2.10  &  1.6e-02 & 2.10  &  2.4e-02 & 2.24   &   3.2e-02 & 2.23  &  3.8e-02 & 2.06   &   1.5e-02 & 1.86    \\
\hline
$E_{\zeta,\ID}^{\nabla}$ &  7.5e-02 & 1.57  &  9.5e-02 & 1.51  &  7.2e-02 & 1.71   &  9.5e-02 & 1.52  &  5.0e-02 & 1.02  &    9.8e-02 & 1.49     \\
\hline
\end{tabular}
}
\caption{Constants and rates for the 2D version of Test Case \refS{tc:casundstefz}.}
\label{tab:casddstefz}
\end{table}
}

\begin{figure}[ht!]
\resizebox{\textwidth}{!}{\includegraphics[trim={4cm 1cm 4cm 2cm},clip]{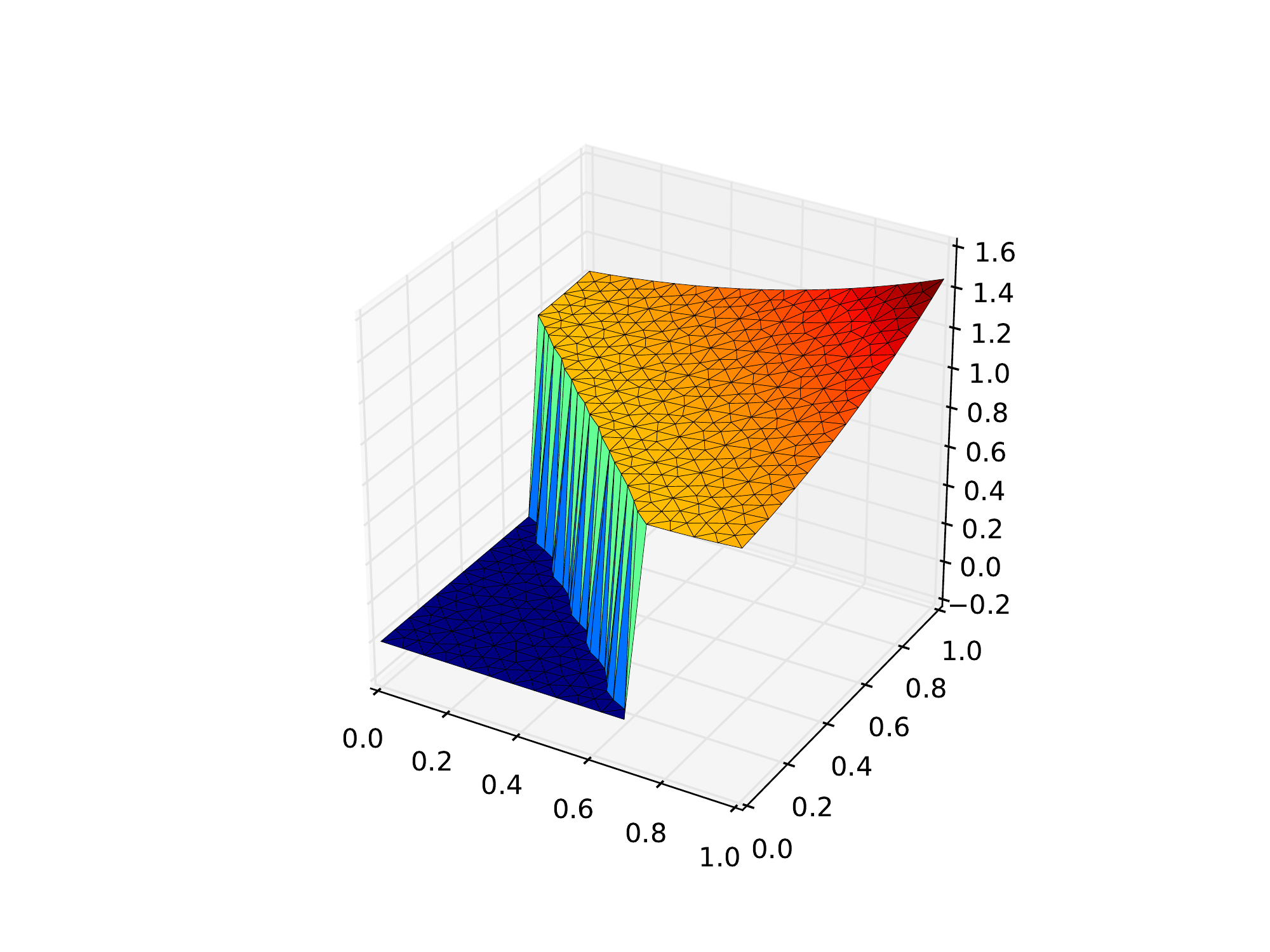}\includegraphics[trim={4cm 1cm 4cm 2cm},clip]{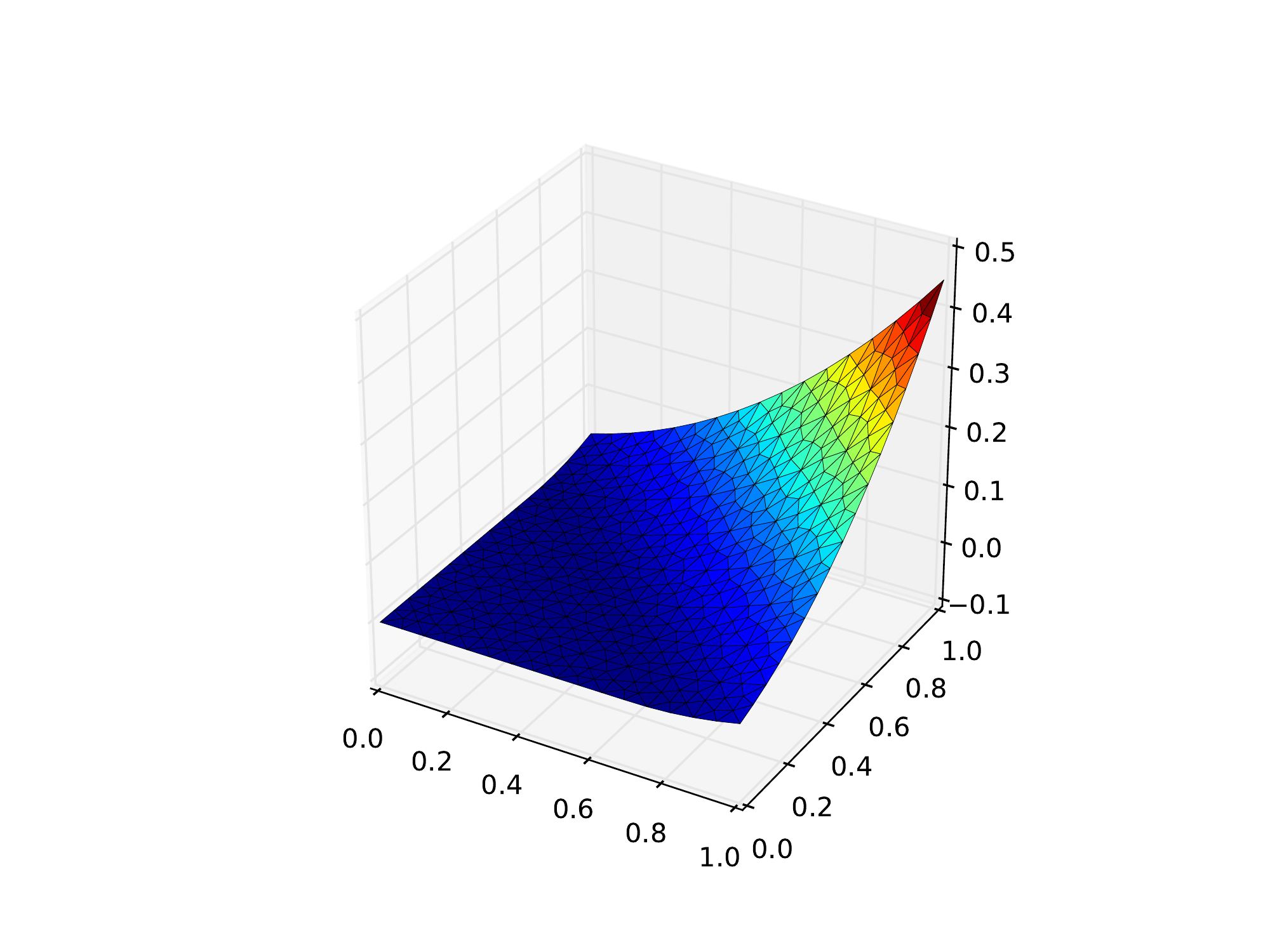}}
\caption{Approximate functions $u$ (left) and $\zeta(u)$ (right) for  the 2D version of Test Case \refS{tc:casundstefz}, using $\disc[\rand][1,\fe]$.}
\label{figure_t4_2Dm_u}\end{figure}

\medskip

{\sc Test with $\disc[\gen,1/4][1,\fe]$ and $\disc[\gen][2,\fe]$: comparison between degree 1 and degree 2}

To properly assess the interest of using a 2nd order scheme over a 1st order scheme, we now look, on the same triangular mesh, at the outputs of $\disc[\rand][2,\fe]$ and $\disc[\rand,\di][1,\fe]$. This makes for a fair comparison since these two schemes have the same number of unknowns. For each errors $E=E_{\beta,\ID}^{\Pi}$, $E=E_{\zeta,\ID}^{\Pi}$ and $E=E_{\zeta,\ID}^{\nabla}$, letting $E_k$ be the error corresponding to $\disc[\rand,\di][1,\fe]$ if $k=1$, or to $\disc[\rand][2,\fe]$ if $k=2$, we compute the ratios $r=E_2/E_1$ for all the tests on the three random meshes based on \texttt{mesh1\_3}, \texttt{mesh1\_4} and \texttt{mesh1\_5}. Assuming that each error $E_k$ is of the form $C_k (\frac {h} {h_0})^{\alpha_k}$, where $h_0$ is the size of the reference mesh \texttt{mesh1\_3}, the ratio between the two errors should be given by 
\[
 r=\frac {E_2} {E_1} = \frac {C_2} {C_1} \left(\frac {h} {h_0}\right)^{\alpha_2-\alpha_1}.
\]
Table \ref{tab:casddstefzcompar} performs a $C(h/h_0)^\alpha$ regression of the ratio $r$. Hence, the $C$ values in this table can be considered as approximations of $ \frac {C_2} {C_1}$, and the $\alpha$ values as approximations of $\alpha_2-\alpha_1$. The results show a clear advantage (smaller $C_k$, larger $\alpha_k$) of the second order method over the first order method, and also that this advantage still holds, albeit reduced, for irregular (Stefan) test cases.

{
\setlength\extrarowheight{2pt}
\begin{table}
\begin{tabular}{|c||c|c||c|c||c|c||c|c|}
\hline
Case &  \multicolumn{2}{c||}{Test Case \refP{tc:casundporfnz}} & \multicolumn{2}{c||}{Test Case \refP{tc:casundporfz}}& \multicolumn{2}{c||}{Test Case \refS{tc:casundstefnz}} & \multicolumn{2}{c|}{Test Case \refS{tc:casundstefz}}\\
\hline
&  $C$ & $\alpha$ &  $C$ & $\alpha$ &  $C$ & $\alpha$ &  $C$ & $\alpha$\\
\hline
$E_{\beta,\ID}^{\Pi}$ 2/1           &   5.8e-02 & 1.63     &   1.3e+00 & 0.99    &  1.2e+00 & 0.37     &  1.5e+00 & 0.53  \\
\hline
$E_{\zeta,\ID}^{\Pi}$ 2/1           &   1.6e-02 & 2.00     &   1.4e-01 & 1.75   &    6.9e-01 & 1.27   &   7.2e-01 & 0.97 \\
\hline
$E_{\zeta,\ID}^{\nabla}$ 2/1        &    4.4e-02 & 1.83    &   4.2e-01 & 2.00    &    8.5e-01 & 1.26   &    4.6e-01 & 1.34   \\
\hline
\end{tabular}
\caption{Constants and rates for the comparison first/second order with the same number of degrees of freedom.}
\label{tab:casddstefzcompar}
\end{table}
}

\subsection{Numerical tests for mass-lumped DG schemes}\label{sec:DG1D}

The mesh $\mesh$ being a general polytopal mesh as in \cite[Definition 7.2]{gdm}, still using the notations in Assumption \ref{assum:discs} the Gradient Discretisation $\discs=\discs[k,\dg]$ for the SIPG method of order $k$ is defined as follows:
\begin{itemize}[\hspace*{.4em}$\bullet$]
\item For each cell $K\in\mesh$, points $(\x_i)_{i\in I_K}$ are chosen such that for each choice of real numbers $(w_i)_{i\in I_K}$ there is a unique $q\in\Poly{k}$ such that $q(\x_i)=w_i$ for all $i\in I_K$. Then $I = (\cup_{K\in\mesh} I_K)\cup I_{\partial\Omega}$ is the family that gathers the indices of all these points for all the cells, and of all the boundary points where a jump is accounted for in the expression of $\nablaD$.
\item For each $K\in\mesh$ and $v=(v_i)_{i\in I}\in\XDz$, $(\PiDs v)_{|K}$ is the unique polynomial in $\Poly{k}$ that takes the values $v_i$ at $\vec{x}_i$ for all $i\in I_K$.
\item The gradient reconstruction is given by $(\nablaD v)_{|K}=\nabla(\PiDs v)_{|K}+S_K(v)$ for all $v\in\XDz$ and $K\in\mesh$, where $S_K(v)$ is an appropriate stabilisation term accounting for the jumps appearing in the DG scheme (see \cite[Definition 11.1]{gdm} for details). 
\end{itemize}

\begin{remark}[Embedding the SIPG method into the GDM] 
The SIPG stabilisation term is accounted for in the design of the gradient reconstruction $\nablaD$ through a penalisation parameter (denoted by $\beta$ in \cite[Chapter 11]{gdm}) which is fixed at $0.6$ in all the tests.
\end{remark}

We take $k=3$ and consider the same families of uniform and random meshes of $\Omega=(0,1)$ with $N$ cells each as in Section \ref{sec:EF1D}. Table \ref{tab:DG.1D} provides the remaining elements to fully define the GDs. These elements follow closely the choices made for the 1D Finite Element meshes in Section \ref{sec:EF1D}, but there is a major difference in the choice of the global nodes. Since DG methods do not have any continuity conditions at the mesh vertices, each of these vertices corresponds to two different nodes (one for each cell the vertex belongs to, or one additional node to encode the boundary conditions for the domain endpoints), with different associated values of the unknowns/test functions. The nodes are therefore $\x_0 = \x_1 = 0 < \x_2 < \x_3 < \x_4 = \x_5 <\ldots \x_{4i} = \x_{4i+1} < \x_{4i+2}< \x_{4i+3}<\ldots \x_{4N} = \x_{4N+1} = 1$, with each cell corresponding to $(\x_{4i+1},\x_{4i+4})$ for $i=0,\ldots,N-1$.

{
\setlength\extrarowheight{2pt}
\begin{table}
\begin{tabular}{|c|c|c|c|}
\hline
Name of GD & Degree $k$  & Quadrature rule & $\ell$\\[.2em]
\hline
$\disc[\gen,a][3,\dg]$ & 3  & Equi6 & --\\[.2em]
\hline
$\disc[\gen,b][3,\dg]$ & 3  & Equi8 & 0\\[.2em]
\hline
$\disc[\gen,c][3,\dg]$ & 3  & Gauss--Lobatto & 2\\[.2em]
\hline
\end{tabular}
\caption{Descriptions of the mass-lumped Discontinuous Galerkin GDs in dimension $d=1$. The degree $k$ is that of the local polynomial spaces, and $\ell$ is the degree in \eqref{assum:quad.qr} for the chosen quadrature rule (`--' means that \eqref{assum:quad.qr} does not even hold for $\ell=0$). 
Here, $\gen=\unif$ or $\rand$ depending if the GD uses the uniform or random meshes.}
\label{tab:DG.1D}
\end{table}
}

The results in Table \ref{tab:casunddg} are in line with what we already observed for Finite Element methods (numerical tests conducted for $k\in\{1,2\}$, not presented here, lead to similar conclusions). The better local quadrature rules of $\disc[\gen,c][3,\dg]$ enable this variant to outperform $\disc[\gen,b][3,\dg]$ and $\disc[\gen,a][3,\dg]$ (this latter being badly hindered by its very low-order local quadrature rule), and preserve the expected order $3$ convergence for smooth data and solutions. This optimal convergence is even noticed in the fully nonlinear test case \refP{tc:casundporfz}. As before, the Stefan problem is much more challenging due to its reduced regularity, but even for this one we notice an interest in selecting a method with high enough local quadrature rules.

{
\setlength\extrarowheight{2pt}
\begin{table}
\begin{tabular}{|c||c|c||c|c||c|c||c|c||c|c|}
\hline
  &  \multicolumn{2}{c||}{Test Case \refR{tc:casundcinfty}} & \multicolumn{2}{c||}{Test Case \refP{tc:casundporfnz}}& \multicolumn{2}{c||}{Test Case \refP{tc:casundporfz}}& \multicolumn{2}{c||}{Test Case \refS{tc:casundstefz}}\\
\hline
 GD &  $C$ & $\alpha$ &  $C$ & $\alpha$&  $C$ & $\alpha$&  $C$ & $\alpha$\\
\hline 
\hline
$\disc[\unif,a][3,\dg]$ & 1.5e-01 & 1.01 & 4.2e-01 & 1.03 & 1.5e-01 & 1.01 & 9.0e-02 & 1.01     \\ 
\hline 
$\disc[\rand,a][3,\dg]$ & 1.7e-01 & 1.02 & 4.2e-01 & 1.03 & 1.5e-01 & 1.01 & 8.5e-02 & 1.00     \\ 
\hline 
$\disc[\unif,b][3,\dg]$ & 2.2e-01 & 2.00 & 2.9e+00 & 1.98 & 2.7e-01 & 2.00 & 8.2e-02 & 1.50     \\ 
\hline 
$\disc[\rand,b][3,\dg]$ & 2.2e-01 & 1.99 & 2.9e+00 & 1.97 & 2.8e-01 & 2.00 & 7.4e-02 & 1.57     \\ 
\hline 
$\disc[\unif,c][3,\dg]$ & 2.3e-02 & 3.25 & 1.4e+00 & 2.39 & 3.9e-02 & 3.42 & 5.4e-02 & 1.49     \\ 
\hline 
$\disc[\rand,c][3,\dg]$ & 1.1e-02 & 3.01 & 1.0e+00 & 2.32 & 1.9e-02 & 3.08 & 5.8e-02 & 1.58     \\ 
\hline  
\end{tabular}
\caption{Constants and rates for $E_{\zeta,\ID}^{\nabla}$ with  DG applied to some 1D test cases.}
\label{tab:casunddg}
\end{table}
}

\begin{figure}[ht!]
\resizebox{\textwidth}{!}{\includegraphics[trim={2cm 7.5cm 2cm 7cm},clip]{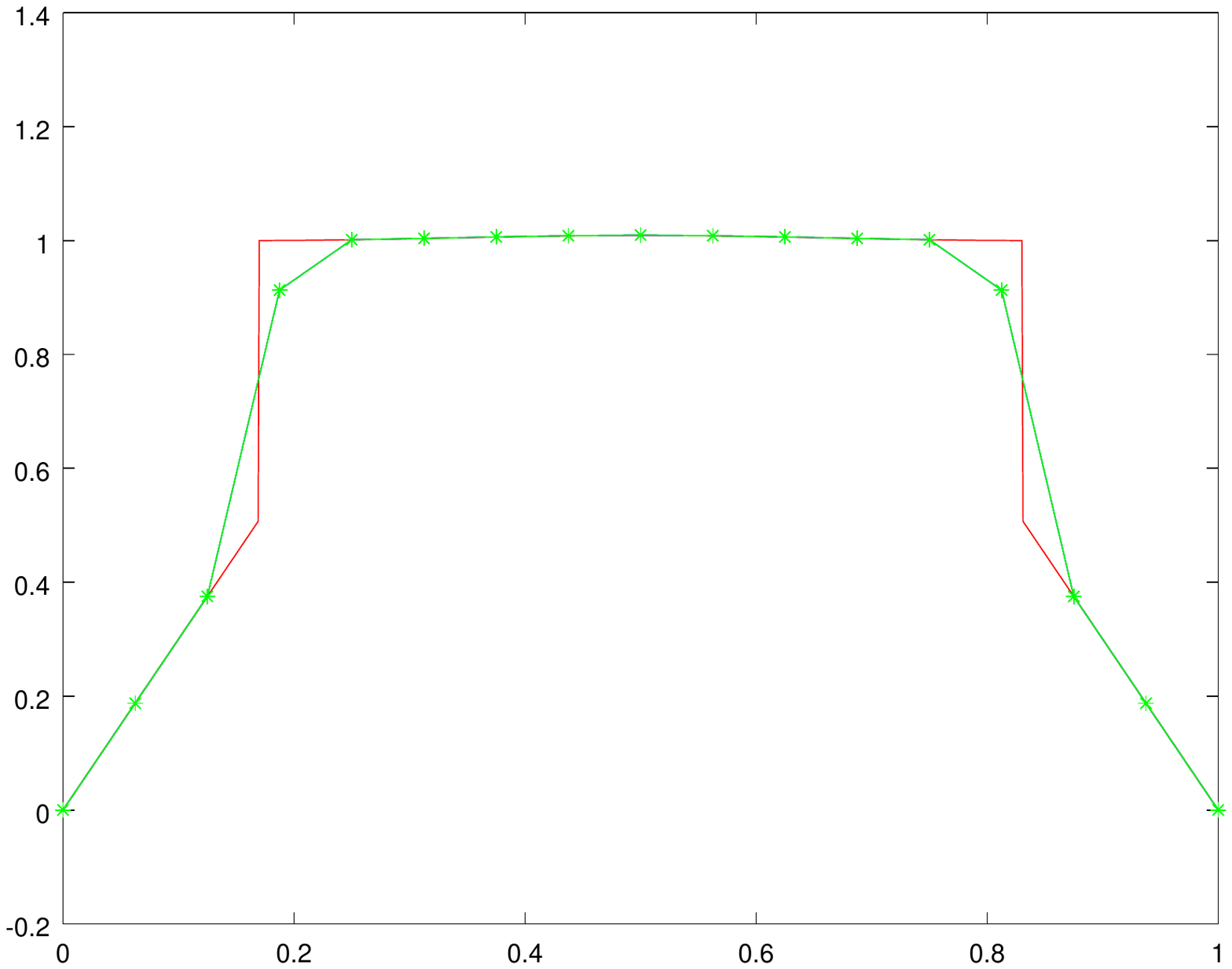}\includegraphics[trim={2cm 7.5cm 2cm 7cm},clip]{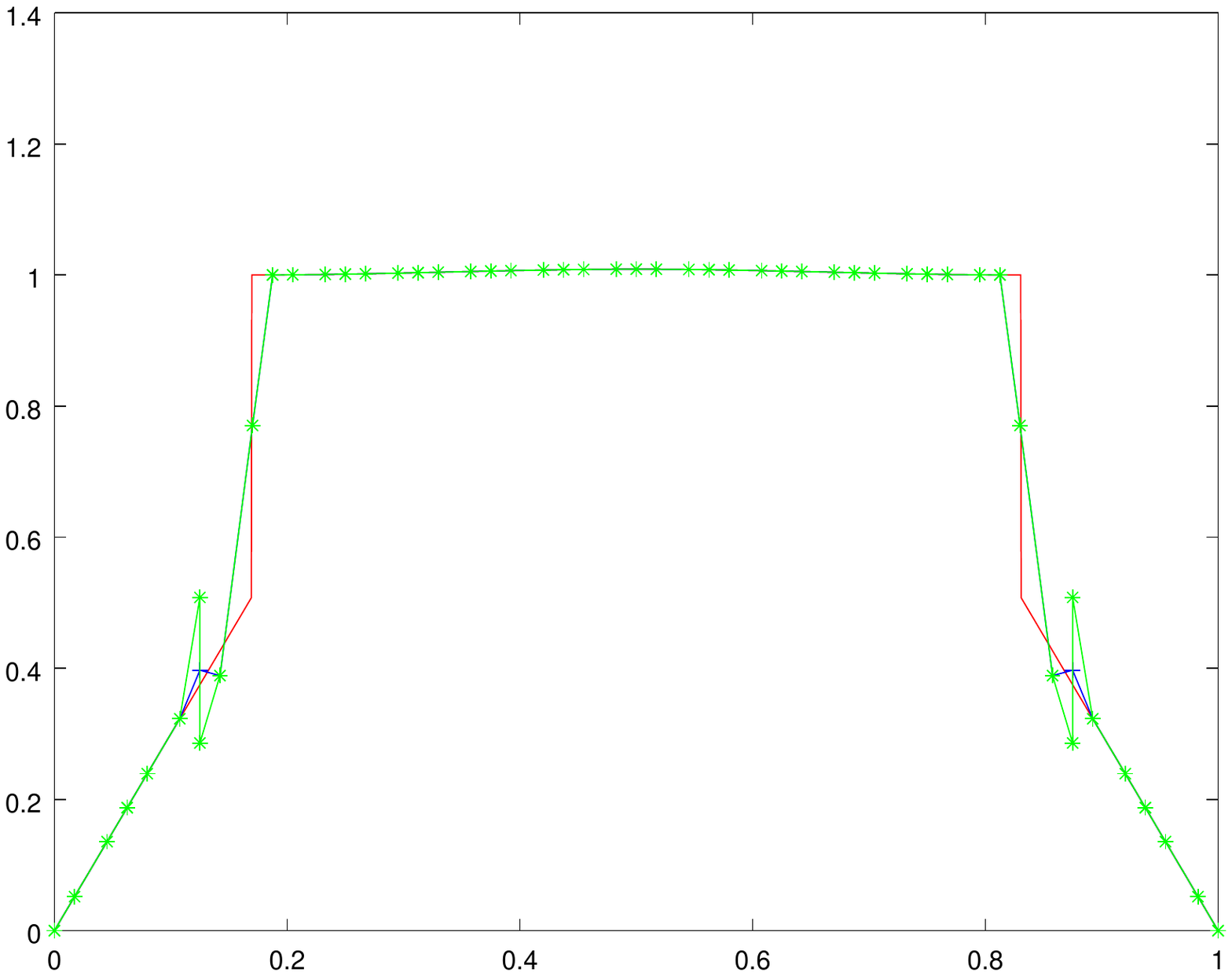}}
\caption{Comparison of approximate $u$ on Test  Case \refS{tc:casundstefnz}: $k=1$ (left), $k=3$ Gauss--Lobatto (right), Finite Element (blue,``+''), DG (green,``$*$''), exact (red), $N=16$, uniform mesh.}
\label{figcompefdg}\end{figure}

Figure \ref{figcompefdg} shows the solutions $u$ obtained with FE and DG schemes, for $k=1$ and $3$, on Test Case \refS{tc:casundstefnz} and with a relatively coarse mesh ($N=16$). As expected, the solutions obtained with $k=3$ are much more accurate. They however present oscillations (more severe for DG than for FE) in the vicinity of the discontinuity of $u$. The solutions (not shown here) for Test Case \refS{tc:casundstefz}, corresponding to $f=0$, do not display such oscillations.

\section{Conclusion}\label{sec:conclusion}

We presented a generic analysis framework, covering a range of methods, for the numerical approximation of nonlinear degenerate elliptic equations, stationary version of the Stefan or porous medium problems. We identified a particular structure of the method, the piecewise constant function reconstruction, which is sufficient and also appears to be necessary to establish the robustness of the schemes, and to obtain error estimates. We showed how to design mass-lumping versions of high-order numerical methods in order to preserve, despite the usage of piecewise constant approximations in the scheme, high-order approximations of the solution to this severely nonlinear model. Our numerical tests on mass-lumped Finite Element and Discontinuous Galerkin schemes corroborated the theoretical findings, showing that even for non-smooth solutions an elevated rate of convergence is obtained only if the mass-lumping is designed to satisfy proper local quadrature rules.

\appendix

\section{Existence and uniqueness of the weak solution}\label{appen:exist.uniq}

\begin{theorem}[Existence and uniqueness of the weak solution]\label{th:exist.uniq}
Under Assumption \eqref{eq:assum}, there is a unique solution $\bu$ to \eqref{stefan:weak}. This solution has the following regularity properties:
\begin{itemize}[\hspace*{.4em}$\bullet$]
\item $\Lambda\nabla\zeta(\bu)+F\in H_{\div}(\Omega)$;
\item if $d\le 3$ and $F\in L^p(\Omega)^d$ for some $p>d$, then $\zeta(\bu)\in C^\theta(\overline{\Omega})$ for some $\theta\in (0,1)$ depending only on $\Omega$, $\Lambda$ and $p$;
\item if $F=0$, $\Omega$ is convex and $\Lambda$ is Lipschitz-continuous, then $\zeta(\bu)\in H^2(\Omega)$.
\end{itemize}
\end{theorem}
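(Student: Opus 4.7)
The plan unfolds in three stages: establish existence by regularizing the degeneracy in $\zeta$ and passing to the limit via a monotonicity (Minty) argument; derive uniqueness directly from the variational formulation, exploiting assumption \eqref{assum:incr}; and obtain the additional regularity by viewing $w := \zeta(\bu) \in H^1_0(\Omega)$ as the solution of a linear elliptic equation with bounded measurable coefficients, to which classical regularity theory applies.

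For existence, I would introduce $\zeta_\eps(s) = \zeta(s) + \eps s$ with $\eps > 0$; this regularized nonlinearity is continuous, strictly increasing, satisfies $\zeta_\eps(0) = 0$, inherits \eqref{assum:zeta} uniformly in $\eps$, and is a homeomorphism of $\R$. Writing the regularized problem in the variable $w_\eps = \zeta_\eps(\bu_\eps)$ (so $\bu_\eps = \zeta_\eps^{-1}(w_\eps)$), the mapping $w \mapsto -\div(\Lambda\nabla w) + \beta(\zeta_\eps^{-1}(w))$ is bounded, coercive and pseudo-monotone on $H^1_0(\Omega)$, so standard monotone-operator (Leray--Lions) theory delivers $w_\eps \in H^1_0(\Omega)$ solving
\begin{equation*}
\int_\Omega \beta(\bu_\eps)\,\bv + \int_\Omega \Lambda\nabla w_\eps \cdot \nabla \bv = \int_\Omega f\bv - \int_\Omega F\cdot\nabla \bv\quad\forall \bv\in H^1_0(\Omega).
\end{equation*}
Testing with $\bv = w_\eps$ and using $\beta(\bu_\eps)\zeta_\eps(\bu_\eps)\ge 0$ yields an $H^1_0$-bound on $w_\eps$ uniform in $\eps$; the super-linearity from \eqref{assum:zeta} then bounds $\bu_\eps$ in $L^2(\Omega)$, and \eqref{assum:beta} bounds $\beta(\bu_\eps)$ in $L^2(\Omega)$. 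Along a subsequence, $w_\eps \to W$ weakly in $H^1_0$ and strongly in $L^2$, $\bu_\eps \rightharpoonup \bu$ and $\beta(\bu_\eps) \rightharpoonup B$ weakly in $L^2$, and since $\eps\bu_\eps \to 0$ in $L^2$, also $\zeta(\bu_\eps) \to W$ strongly in $L^2$. Passing to the limit in the weak formulation gives the equation with $B$ and $W$ in place of $\beta(\bu)$ and $\zeta(\bu)$; using $\bv = W$ in the limit equation together with the energy identity obtained by testing the regularized one with $w_\eps$ shows $\int_\Omega \beta(\bu_\eps)\zeta(\bu_\eps) \to \int_\Omega BW$, after which the Minty-type Lemma D.10 of \cite{gdm} identifies $W = \zeta(\bu)$ and $B = \beta(\bu)$, concluding existence.

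Uniqueness is short: for two solutions, subtract the weak formulations and test with $\bv = \zeta(\bu_1)-\zeta(\bu_2) \in H^1_0(\Omega)$; the reaction term is non-negative by comonotonicity of $\beta$ and $\zeta$, so coercivity of $\Lambda$ forces $\zeta(\bu_1) = \zeta(\bu_2)$. Inserting this back, $\int_\Omega (\beta(\bu_1)-\beta(\bu_2))\bv = 0$ for every $\bv\in H^1_0(\Omega)$; density of $H^1_0$ in $L^2$ gives $\beta(\bu_1) = \beta(\bu_2)$ a.e., and \eqref{assum:incr} then yields $\bu_1 = \bu_2$ from equality of $\beta+\zeta$.

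For the regularity claims, the $H_{\div}$ statement reads $\div(\Lambda\nabla\zeta(\bu)+F) = \beta(\bu)-f \in L^2(\Omega)$ directly from \eqref{stefan:weak} by varying $\bv$. For Hölder continuity, $w = \zeta(\bu) \in H^1_0(\Omega)$ solves the linear equation $-\div(\Lambda\nabla w) = (f-\beta(\bu)) + \div F$ with $f-\beta(\bu)\in L^2(\Omega)$ (which, since $d\le 3$, embeds in $L^q(\Omega)$ for some $q > d/2$) and $F\in L^p(\Omega)^d$ for $p > d$, so the De Giorgi--Nash--Moser--Stampacchia theory for linear elliptic equations with bounded measurable coefficients delivers $w \in C^\theta(\overline{\Omega})$ with $\theta \in (0,1)$ depending only on $\Omega$, $\Lambda$ and $p$. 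When $F=0$, $\Omega$ is convex, and $\Lambda$ is Lipschitz, classical $H^2$-regularity theory (Grisvard) applied to $-\div(\Lambda\nabla w) = f-\beta(\bu) \in L^2(\Omega)$ with zero Dirichlet data yields $w \in H^2(\Omega)$. The main obstacle throughout is the identification of \emph{two} nonlinear weak limits $B = \beta(\bu)$ and $W = \zeta(\bu)$ from a single weak $L^2$-cluster point $\bu$ of $(\bu_\eps)$; this requires the energy convergence $\int_\Omega \beta(\bu_\eps)\zeta(\bu_\eps) \to \int_\Omega BW$, which in turn relies on carefully chaining the strong $L^2$-convergence of $w_\eps$ with the identity $\zeta(\bu_\eps) = w_\eps - \eps\bu_\eps$.
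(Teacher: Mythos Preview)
Your proof is correct, and the uniqueness and regularity parts are essentially identical to the paper's arguments (the paper recovers $\beta(\bu_1)=\beta(\bu_2)$ via the distributional identity $\beta(\bu_i)-\div(\Lambda\nabla\zeta(\bu_i))=f+\div F$ rather than density of $H^1_0$ in $L^2$, but this is cosmetic).

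The existence argument, however, takes a genuinely different route. The paper obtains existence as a by-product of the numerical analysis already developed: it invokes Theorem~\ref{th:convergence.scheme} (convergence of the Gradient Scheme) together with Lemma~\ref{lem:GD.exist}, which constructs a concrete sequence of mass-lumped $\Poly{1}$ Gradient Discretisations satisfying the required coercivity, consistency, limit-conformity and compactness. Your approach is the classical PDE route: regularise the degeneracy by $\zeta_\eps=\zeta+\eps\,{\rm Id}$, solve the non-degenerate problem via Browder--Minty (your operator is in fact monotone, not merely pseudo-monotone), and pass to the limit with the same Minty trick \cite[Lemma~D.10]{gdm}. The paper's approach is economical within the manuscript, since it recycles the scheme-convergence machinery and avoids a separate analytical construction; your approach is self-contained and does not rely on discrete tools, which makes it more portable outside the GDM framework. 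A minor remark: your energy-identity justification for $\int_\Omega\beta(\bu_\eps)\zeta(\bu_\eps)\to\int_\Omega BW$ is slightly over-engineered---this limit follows directly from the weak convergence of $\beta(\bu_\eps)$ against the strong $L^2$-convergence of $\zeta(\bu_\eps)=w_\eps-\eps\bu_\eps$.
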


\begin{proof}
The existence of a solution is a consequence of Theorem \ref{th:convergence.scheme}, together with Lemma \ref{lem:GD.exist} that establishes the existence of a proper sequence of Gradient Discretisations. To prove the uniqueness of this solution, consider $\bu_1$ and $\bu_2$ two solutions to \eqref{stefan:weak}, subtract their respective equations, and take $v=\zeta(\bu_1)-\zeta(\bu_2)\in H^1_0(\Omega)$ as a test function to get
\[
\int_\Omega (\beta(\bu_1)-\beta(\bu_2))(\zeta(\bu_1)-\zeta(\bu_2))+\int_\Omega \Lambda \nabla (\zeta(\bu_1)-\zeta(\bu_2))\cdot\nabla (\zeta(\bu_1)-\zeta(\bu_2))=0.
\]
The first term is non-negative since $\beta$ and $\zeta$ are non-decreasing, and thus $\nabla (\zeta(\bu_1)-\zeta(\bu_2))=0$.
This shows that $\zeta(\bu_1)=\zeta(\bu_2)$. The weak formulation \eqref{stefan:weak} also shows that
$\beta(\bu_1)-\Delta \zeta(\bu_1)=f+\div(F)=\beta(\bu_2)-\Delta \zeta(\bu_2)$ in the sense of distributions on $\Omega$; since $\zeta(\bu_1)=\zeta(\bu_2)$, this yields $\beta(\bu_1)=\beta(\bu_2)$. Hence, $\beta(\bu_1)+\zeta(\bu_1)=\beta(\bu_2)+\zeta(\bu_2)$ and Hypothesis \eqref{assum:incr} shows that $\bu_1=\bu_2$.

\medskip

We finally consider the regularity properties of $\zeta(\bu)$. This function is a weak solution of
\[
\zeta(\bu)\in H^1_0(\Omega)\mbox{ and }-\div(\Lambda\nabla\zeta(\bu)+F)=f-\beta(\bu)\in L^2(\Omega).
\]
This readily shows that $\Lambda\nabla\zeta(\bu)+F\in H_{\div}(\Omega)$. If $d\le 3$, then $L^2\subset W^{-1,q}(\Omega)$ for some $q>d$ and thus, assuming that $F\in L^p(\Omega)^d$ for $p>d$, $\zeta(\bu)$ is a solution in $H^1_0(\Omega)$ of
$-\div(\Lambda\nabla\zeta(\bu))=f+\div(F)-\beta(\bu)\in W^{-1,\min(q,p)}(\Omega)$; the results of \cite{stamp} then show that $\zeta(\bu)$ has the H\"older-regularity stated in the theorem. Finally, the $H^2$ regularity property is a straightforward consequence of the optimal elliptic regularity on convex domains for Lipschitz-continuous diffusion tensors. \end{proof}

\begin{lemma}[Existence of suitable sequences of GDs]\label{lem:GD.exist}
Under Assumption \eqref{assum:Omega}, there exists a sequence $(\disc[m])_{m\in\N}=(\XDmz,\PiDm,\nablaDm,\QD[m])_{m\in\N}$ of Gradient Discretisations, with piecewise constant reconstructions, that satisfy the coercivity, consistency, limit-conformity and compactness properties stated in Theorem \ref{th:convergence.scheme}.
\end{lemma}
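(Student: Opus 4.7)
The plan is to exhibit one explicit family of Gradient Discretisations and then verify the four required properties by invoking established GDM results. I would construct $(\disc[m])_{m\in\N}$ as the mass-lumped $\Poly{1}$ conforming Finite Element Gradient Discretisation on a shape-regular sequence of conforming simplicial meshes $(\mesh_m)_{m\in\N}$ of $\Omega$ whose mesh sizes $h_m$ tend to zero. If $\Omega$ is not polytopal, I would first exhaust it by a nested sequence of polytopal sub-domains $\Omega_m \nearrow \Omega$, triangulate each $\Omega_m$ in a shape-regular manner, and extend the resulting GD by zero to $\Omega \setminus \Omega_m$. The discrete unknowns are the nodal values at the interior vertices of $\mesh_m$, the gradient reconstruction $\nablaDm$ is the cell-wise gradient of the $\Poly{1}$ nodal interpolant, and the function reconstruction $\PiDm$ is piecewise constant on the Donald diagram $(\pwpart_i)_{i\in I}$ attached to each vertex, which makes $\disc[m]$ piecewise constant in the sense of Definition \ref{def:pw}. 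I would then choose $\QD[m]=\mathrm{Id}$, so that the second requirement in \eqref{def:gdconsistency:pdisc} holds trivially.

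Next, I would verify coercivity, consistency, limit-conformity and compactness. The key auxiliary bound is the classical estimate
\[
\norm[L^2]{\PiDm v - \PiDs[m] v} \lesssim h_m \norm[L^2]{\nablaDm v},
\]
where $\PiDs[m]$ is the non-mass-lumped $\Poly{1}$ reconstruction; this allows to transfer all the GD-type properties of the standard $\Poly{1}$ GD to its mass-lumped variant. Coercivity then follows from the discrete $\Poly{1}$ Poincar\'e inequality combined with the auxiliary bound. For consistency, I would first take $\varphi \in C^\infty_c(\Omega)$ and use its $\Poly{1}$ nodal interpolant, which yields $S_{\disc[m]}(\varphi)\to 0$ for smooth test functions; a density argument in $H^1_0(\Omega)$, combined with a uniform control of $S_{\disc[m]}$ in terms of $\norm[H^1]{\cdot}$, closes the estimate for general $\varphi \in H^1_0(\Omega)$. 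Limit-conformity follows, for smooth $\vec\psi \in C^\infty(\overline{\Omega})^d$, from a cell-wise integration by parts, and extends to $H_\div(\Omega)$ using the uniform bound $W_{\disc[m]}(\vec\psi) \lesssim \norm[H_\div(\Omega)]{\vec\psi}$. Compactness is the discrete Rellich--Kolmogorov theorem for $\Poly{1}$ elements, carried over to $\PiDm$ through the auxiliary bound.

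The main concern is the handling of a non-polytopal $\Omega$, which is managed by the polytopal exhaustion sketched above: the boundary layer $\Omega \setminus \Omega_m$ has measure tending to zero, so it contributes only vanishing errors to each of the four properties, possibly requiring a diagonal extraction to combine the triangulation size and the exhaustion index. All the ingredients --- in particular the coercivity, consistency, limit-conformity and compactness of the mass-lumped $\Poly{1}$ GD --- are detailed in \cite[Chapters 2 and 8]{gdm}, so the proof essentially reduces to assembling these results rather than developing any genuinely new argument.
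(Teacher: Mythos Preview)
Your proposal is correct and follows essentially the same approach as the paper: mass-lumped $\Poly{1}$ conforming Finite Elements on shape-regular simplicial meshes of polytopal subdomains $\Omega_m\subset\Omega$, extended by zero, with $\QD[m]=\mathrm{Id}$, and the four properties established by appeal to the results of \cite[Chapters 2 and 8]{gdm}. The only minor simplification in the paper is that it meshes $\R^d$ directly and sets $\Omega_m$ to be the union of the simplices contained in $\Omega$, so the exhaustion and the mesh refinement are driven by a single parameter and no diagonal extraction is needed.
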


\begin{proof} 
Let $(\widetilde{\mesh}_m)_{m\in\N}$ be a sequence of conformal simplicial meshes of $\R^d$ (see, e.g., \cite[Definition 7.4]{gdm}), such that $\lim_{m\to\infty}\max_{T\in\widetilde{\mesh}_m}{\rm diam}(T)\to 0$ and $(\mesh_m)_{m\in\N}$ is regular in the sense that the ratio of the diameter of $T\in\widetilde{\mesh}_m$ over the largest ball inside $T$ is bounded uniformly with respect to $T$ and $m$. We let $\mesh_m=\{T\in \widetilde{\mesh}_m\,:\,T\subset \Omega\}$ and define the polyhedral set $\Omega_m\subset \Omega$ as the interior of $\cup_{T\in\mesh_m}\overline{T}_m$.

The Gradient Discretisation $\disc[m]=(\XDmz,\PiDm,\nablaDm,\QD[m])$ is defined as the mass-lumped conforming $\Poly{1}$ Gradient Discretisation on the mesh $\mesh_m$ of $\Omega_m$ \cite[Section 8.4]{gdm}, with extensions to $\Omega$ by $0$ outside $\Omega_m$, and no quadrature rule. Letting $\vertices_m$ be the set of vertices of $\mesh_m$, we therefore set
\begin{itemize}[\hspace*{.4em}$\bullet$]
\item $\XDmz=\{v=(v_i)_{i\in \vertices_m}\,:\,v_i\in\R\,,\;v_i=0\mbox{ if $i\in\partial\Omega_m$}\}$;
\item for $v\in\XDmz$, $(\PiDm v)_{|\Omega_i}=v_i$ for all $i\in \vertices_m$, where $(\Omega_i)_{i\in\vertices_m}$ is the dual (Donald) mesh of $\mesh_m$, and $\PiDm v=0$ on $\Omega\backslash\Omega_m$;
\item for $v\in\XDmz$, $\nablaDm v$ is on $\Omega_m$ the gradient of the conforming $\Poly{1}$ reconstruction from the vertex values $(v_i)_{i\in\vertices_m}$, and $\nablaDm v=0$ on $\Omega\backslash\Omega_m$;
\item $\QD[m]={\rm Id}:L^2(\Omega)\to L^2(\Omega)$.
\end{itemize}
Since the functions and gradient reconstructions are extended by $0$ outside $\Omega_m$, $C_{\disc[m]}$ and $W_{\disc[m]}$ can be computed using norms and integrals over $\Omega_m$. The properties of mass-lumped $\Poly{1}$ GDs on $\Omega_m$ (see \cite[Theorem 8.17]{gdm}) then show that $(\disc[m])_{m\in\N}$ is coercive, limit-conforming and compact. It remains to analyse the consistency of $(\disc[m])_{m\in\N}$.

As seen in \cite[Lemma 2.16]{gdm}, the consistency follows if we prove that $S_{\disc[m]}(\varphi)\to 0$ when $\varphi\in C^\infty_c(\Omega)$. In that case, for $m$ large enough, $\varphi\in C^\infty_c(\Omega_m)$ and the norms in $S_{\disc[m]}(\varphi)$ can be restricted to $\Omega_m$. The estimate in \cite[Remark 8.18]{gdm} then shows that $S_{\disc[m]}(\varphi)\le C_\varphi \max_{T\in\mesh_m}{\rm diam}(T)$ with $C_\varphi$ not depending on $m$. This shows that $S_{\disc[m]}(\varphi)\to 0$ as $m\to\infty$, as required. \end{proof}

\section{Conforming scheme}\label{sec:conforming}

Throughout this section, we assume that $F=0$. Using Assumptions \eqref{assum:zeta}, \eqref{assum:beta} and \eqref{assum:incr}, we see that $\beta+\zeta:\R\to\R$ is bijective and we can therefore set $\mu(t)=\zeta((\beta+\zeta)^{-1}(t))$ and $\rho(t):=t-\mu(t)=\beta((\beta+\zeta)^{-1}(t))$. These functions are non-decreasing and $1$-Lipschitz continuous and, setting $\bw=(\beta+\zeta)(\bu)$, we see that \eqref{stefan:weak} is equivalent to: find $w\in L^2(\Omega)$ such that $\mu(\bw)\in H^1_0(\Omega)$ and
\begin{equation}\label{stefan:weak2}
\int_\Omega \rho(\bw) \bv +\int_\Omega \Lambda\nabla\mu(\bw) \cdot\nabla \bv = \int_\Omega f\bv\,,\qquad\forall \bv\in H^1_0(\Omega).
\end{equation}
Given a family $(V_m)_{m\in\N}$ of finite dimensional subspaces of $H^1_0(\Omega)$, conforming schemes for \eqref{stefan:weak2} are written: find $w_m\in V_m$ such that
\begin{equation}\label{stefan:cs}
\int_\Omega \rho(w_m)  v +\int_\Omega \Lambda\nabla \mu(w_m) \cdot\nabla v=\int_\Omega f v\,,\qquad\forall v\in V_m.
\end{equation}
Introducing the function $\nu:\R\to\R$ defined by $\nu(s)=\int_0^s \sqrt{\mu'(r)} dr$, we can then state the following convergence theorem.

\begin{theorem}[Convergence of the scheme]\label{th:convergence.schemecgr}
Assume that \eqref{eq:assum} holds and that, for all $\varphi\in H^1_0(\Omega)$, $\lim_{m\to\infty} \inf_{v\in V_m} \norm[H^1_0(\Omega)]{\varphi -v} = 0$.
Then, for any $m\in\N$, there exists $w_m$ a solution to \eqref{stefan:cs} and, if $\bw$ is the solution to \eqref{stefan:weak2}, as $m\to\infty$, we have $\mu(u_m)\to \mu(\bw)$ weakly in $H^1_0(\Omega)$ and strongly in $L^2(\Omega)$, $\nu(w_m)\to \nu(\bw)$  weakly in $H^1_0(\Omega)$ and strongly in $L^2(\Omega)$, and $\rho(w_m)\to \rho(\bw)$ weakly in $L^2(\Omega)$.

Moreover, if the energy equality
  \begin{equation}\label{property.bw}
  \int_\Omega \rho(\bw)\bw +\int_\Omega \Lambda\nabla \nu(\bw) \cdot\nabla \nu(\bw)= \int_\Omega f \bw
  \end{equation}
holds, then $\nabla \nu(w_m)\to \nabla \nu(\bw)$ and $w_m\to \bw$ strongly in $L^2(\Omega)$. 
\end{theorem}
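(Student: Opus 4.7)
The plan is to exploit the reformulation \eqref{stefan:weak2} and the fact that, thanks to the conformity $V_m\subset H^1_0(\Omega)$, the function $w_m$ is admissible as its own test function in \eqref{stefan:cs}. The chain rule $\nabla \mu(w_m)\cdot \nabla w_m = \mu'(w_m)|\nabla w_m|^2 = |\nabla \nu(w_m)|^2$ then gives the fundamental discrete energy identity
\begin{equation*}
\int_\Omega \rho(w_m)\,w_m + \int_\Omega \Lambda \nabla \nu(w_m)\cdot \nabla \nu(w_m) = \int_\Omega f\,w_m.
\end{equation*}
The first term is non-negative by monotonicity of $\rho$. Writing $s=\beta(u)+\zeta(u)$ with $u=(\beta+\zeta)^{-1}(s)$, the growth assumptions \eqref{assum:zeta}--\eqref{assum:beta} yield $|s|\lesssim |\mu(s)|+1$; since $\mu'\le 1$ gives $|\nabla \mu(w_m)|^2\le |\nabla \nu(w_m)|^2$, Poincar\'e's inequality on $\mu(w_m)\in H^1_0(\Omega)$ implies $\|w_m\|_{L^2}\lesssim \|\nabla \nu(w_m)\|_{L^2}+1$. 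Plugged back into the identity, this produces uniform bounds on $\|\nabla \nu(w_m)\|_{L^2}$, $\|\mu(w_m)\|_{H^1_0}$, $\|w_m\|_{L^2}$ and, by 1-Lipschitzianity of $\rho$, $\|\rho(w_m)\|_{L^2}$. Existence of $w_m$ then follows from a topological degree argument identical to the one in Lemma \ref{lem:exuniq}, using the linear homotopy $\mu_a:=a\mu+(1-a)\mathrm{id}$.

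Up to a subsequence, weak compactness and the Rellich embedding produce limits $\mu(w_m)\rightharpoonup \mu^{\star}$ and $\nu(w_m)\rightharpoonup \nu^{\star}$ weakly in $H^1_0$ and strongly in $L^2$, together with $w_m\rightharpoonup w^{\star}$ and $\rho(w_m)\rightharpoonup \rho^{\star}$ weakly in $L^2$. To identify $\mu^{\star}=\mu(w^{\star})$, apply Minty's trick: for any $\varphi\in L^2(\Omega)$, the monotonicity inequality $\int_\Omega (\mu(w_m)-\mu(\varphi))(w_m-\varphi)\ge 0$ passes to the limit because $\int \mu(w_m)w_m\to \int \mu^{\star} w^{\star}$ by strong-weak pairing; choosing $\varphi = w^{\star}+\epsilon \psi$ and letting $\epsilon\to 0$ yields $\mu^{\star}=\mu(w^{\star})$. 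The same argument gives $\nu^{\star}=\nu(w^{\star})$, and then $\rho^{\star}=w^{\star}-\mu^{\star}=\rho(w^{\star})$. To pass to the limit in \eqref{stefan:cs} with a fixed $v\in H^1_0(\Omega)$, pick $v_m\in V_m$ with $v_m\to v$ in $H^1_0$ (density assumption); all resulting products are strong-weak or strong-strong pairings, so $w^{\star}$ solves \eqref{stefan:weak2}, hence $w^{\star}=\bw$ by uniqueness (Theorem \ref{th:exist.uniq}), and the convergence extends to the whole sequence.

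Now assume the energy equality \eqref{property.bw}. Passing the discrete energy identity to the limit yields, via $\int f w_m\to \int f\bw$ and \eqref{property.bw},
\begin{equation*}
\lim_{m\to\infty}\left(\int_\Omega \rho(w_m)\,w_m + \int_\Omega \Lambda \nabla \nu(w_m)\cdot \nabla \nu(w_m)\right) = \int_\Omega \rho(\bw)\,\bw + \int_\Omega \Lambda \nabla \nu(\bw)\cdot \nabla \nu(\bw).
\end{equation*}
Expanding the non-negative quantity $\int_\Omega (\rho(w_m)-\rho(\bw))(w_m-\bw)\ge 0$ and using the weak convergences of $\rho(w_m)$ and $w_m$ against the fixed $\bw$ and $\rho(\bw)$ gives $\liminf \int \rho(w_m) w_m \ge \int \rho(\bw)\bw$. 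Combined with the weak lower semicontinuity of $\xi\mapsto \int \Lambda\xi\cdot \xi$, both $\liminf$'s must saturate their sum, so
\begin{equation*}
\int_\Omega \rho(w_m)\,w_m \to \int_\Omega \rho(\bw)\,\bw \qquad \text{and} \qquad \int_\Omega \Lambda \nabla \nu(w_m)\cdot \nabla \nu(w_m) \to \int_\Omega \Lambda \nabla \nu(\bw)\cdot \nabla \nu(\bw).
\end{equation*}
The second limit and $\Lambda$-coercivity, together with the weak convergence $\nabla \nu(w_m)\rightharpoonup \nabla \nu(\bw)$, yield strong $L^2$ convergence of $\nabla \nu(w_m)$. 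For $w_m\to \bw$ in $L^2$, exploit $\mathrm{id}=\mu+\rho$ to obtain the pointwise identity
\begin{equation*}
(w_m-\bw)^2 = (\mu(w_m)-\mu(\bw))(w_m-\bw) + (\rho(w_m)-\rho(\bw))(w_m-\bw),
\end{equation*}
both right-hand terms being non-negative. Integrating and expanding, each integrates to zero in the limit: the $\mu$-term by strong-weak pairings (using strong $L^2$ convergence of $\mu(w_m)$), and the $\rho$-term via the already-established $\int \rho(w_m) w_m\to \int \rho(\bw)\bw$ combined with the weak convergences of $\rho(w_m)$ and $w_m$. Hence $\|w_m-\bw\|_{L^2}^2\to 0$.

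The main obstacle is the identification of $\lim \int \rho(w_m) w_m$: the sequences $\rho(w_m)$ and $w_m$ converge only weakly in $L^2$, so this limit is \emph{not} a consequence of the weak convergences alone. The energy equality \eqref{property.bw} is precisely the extra ingredient that, paired with the weak lower semicontinuity of the $\Lambda$-weighted Dirichlet energy, pins it down; without \eqref{property.bw}, one is stuck at weak convergences as stated in the first part of the theorem.
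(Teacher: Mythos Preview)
Your proof is correct and follows essentially the same route as the paper's: the discrete energy identity from testing with $v=w_m$, the chain rule $\nabla\mu(w_m)\cdot\nabla w_m=|\nabla\nu(w_m)|^2$, the uniform bounds, Minty's trick for the identification of limits, and the saturation of the two $\liminf$'s under the energy equality \eqref{property.bw}. The only cosmetic differences are that the paper identifies $\nu(w_m)\to\nu(\bw)$ via the inequality $(\nu(a)-\nu(b))^2\le (a-b)(\mu(a)-\mu(b))$ rather than a second Minty argument, and concludes $w_m\to\bw$ in $L^2$ by showing $\int w_m^2\to\int\bw^2$ rather than expanding $(w_m-\bw)^2$; both variants are equivalent.
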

\begin{remark}[On condition \eqref{property.bw}]
We observe that \eqref{property.bw} holds in the case where $\bw\in H^1_0(\Omega)$ since it can then be taken as a test function in \eqref{stefan:weak2}. But
it may also hold in some less regular situations. 
\end{remark}

\begin{proof}
We only sketch the proof.
Assuming the existence of a solution $w_m$ to the scheme, we let $v=v_m$ in \eqref{stefan:cs}, use the monotonicity of $\mu$ and $\rho$, the relation $\mu'(w_m)|\nabla w|^2 = |\nabla \nu(w_m)|^2$, the coercivity of $\Lambda$ and the Poincar\'e inequality, we write (with $a\lesssim b$ meaning $a\le Cb$ with $C$ independent of $m$):
\begin{equation}\label{conf.est.1}
\underline{\lambda}\norm[L^2]{\nabla\nu(w_m)}^2\le \norm[L^2]{f}\norm[L^2]{w_m}
\lesssim \norm[L^2]{f}(1+ \norm[L^2]{\mu(w_m)}) \lesssim \norm[L^2]{f} (1+\norm[L^2]{\nabla\mu(w_m)}).
\end{equation}
We have $|\nabla \mu(w_m)|^2 = \mu'(w_m)|\nabla \nu(w_m)|^2 \le |\nabla \nu(w_m)|^2$ and the estimate above therefore gives a bound on $\nu(w_m)$ in $H^1_0(\Omega)$, and thus also on $\mu(w_m)$. Using a coercivity property of $\mu$ similar to that of $\zeta$ we infer bounds in $L^2(\Omega)$ on $w_m$ and $\rho(w_m)$. A topological degree argument, similar to the one developed in the proof of Lemma \ref{lem:exuniq}, then ensures the existence of at least one solution $w_m$ to \eqref{stefan:cs}.

These bounds give $\bv\in H^1_0(\Omega)$ and $\bw\in L^2(\Omega)$ such that, up to a subsequence, $\mu(w_m)\to \bv$ strongly in $L^2(\Omega)$, $\nabla\mu(w_m)\to \nabla \bv$ weakly in $L^2(\Omega)^d$ and $w_m\to \bw$ weakly in $L^2(\Omega)$. By weak/strong convergence we infer that
\[
 \lim_{m\to\infty} \int_\Omega  w_m \mu(w_m) = \int_\Omega  \bw\;\bv
\]
and a Minty argument \cite[Lemma D.10]{gdm} yields $\bv = \mu(\bw)$, and thus $\rho(w_m) \to \bw - \mu(\bw) = \rho(\bw)$   weakly in $L^2(\Omega)$. We have $(\nu(a) - \nu(b))^2 \le (b-a)(\mu(b)-\mu(a))$ and the strong convergence of $\mu(w_m)$ in $L^2$ therefore shows that $\nu(w_m)\to \nu(\bw)$ in $L^2(\Omega)$. Since $(\nu(w_m))_{m\in\N}$ is bounded in $H^1_0(\Omega)$, this convergence also holds weakly in this space.

Letting $\varphi\in H^1_0(\Omega)$ and taking $v_m:= \mathrm{argmin}_{v\in V_m}  \norm[H^1_0(\Omega)]{\varphi -v}$ in \eqref{stefan:cs}, the above convergences enable us to take the limit as $m\to\infty$ to see that $\bw$ is the solution to \eqref{stefan:weak2}. The uniqueness of $\bw$ shows that the convergence property holds for the whole sequence.

Assuming that \eqref{property.bw} holds, we apply \eqref{stefan:cs} with $v=w_m$ to get
  \begin{equation}\label{cvc.1}
		\begin{aligned}
   \lim_{m\to\infty} \left(\int_\Omega \rho( w_m) w_m +\int_\Omega \Lambda\nabla \nu(w_m) \cdot\nabla \nu(w_m)\right)  ={}&\int_\Omega f \bw \\
	={}& \int_\Omega \rho(\bw)\bw +\int_\Omega \Lambda\nabla \nu(\bw) \cdot\nabla \nu(\bw).
	\end{aligned}
  \end{equation}
The weak convergence of $\nu(w_m)$ in $H^1_0(\Omega)$ ensures that
\begin{equation}\label{liminf.1}
   \liminf_{m\to\infty} \int_\Omega \Lambda\nabla \nu(w_m) \cdot\nabla \nu(w_m)\ge \int_\Omega \Lambda\nabla \nu(\bw) \cdot\nabla \nu(\bw).
\end{equation}
Developing the relation $\int_\Omega (\rho( w_m) -\rho(\bw))(w_m-\bw) \ge 0$
 and using the weak convergences $w_m\to\bw$ and $\rho(w_m)\to\rho(\bw)$ in $L^2(\Omega)$ we have 
  \begin{equation}\label{liminf.2}
   \liminf_{m\to\infty} \int_\Omega \rho( w_m) w_m \ge  \int_\Omega \rho(\bw)\bw.
  \end{equation}
  Using \eqref{liminf.1} and \eqref{liminf.2} together with \eqref{cvc.1} yields
  \[
    \int_\Omega \Lambda\nabla \nu(w_m) \cdot\nabla \nu(w_m)\to \int_\Omega \Lambda\nabla \nu(\bw) \cdot\nabla \nu(\bw)\ \mbox{ and }\ \int_\Omega \rho( w_m) w_m \to  \int_\Omega \rho(\bw)\bw.
  \]
	The first relation classically shows that $\nabla\nu(w_m)\to\nabla\nu(\bw)$ strongly in $L^2(\Omega)$. 
	Using the second relation and a weak/strong convergence argument on $\mu(w_m)w_m$ we infer that
\[
\int_\Omega w_m^2 = \int_\Omega \rho(w_m)w_m+\mu(w_m)w_m\to \int_\Omega \rho(\bw)\bw + \mu(\bw)\bw=\int_\Omega \bw^2,
\]
which gives the strong convergence in $L^2(\Omega)$ of $\bw$. \end{proof}

\begin{remark}[About the assumption $F=0$]
If $F\neq 0$, then an additional term $\int_\Omega F\cdot\nabla w_m$ appears in the sequence of inequalities \eqref{conf.est.1}, and this term cannot be estimated since no \emph{a priori} bound is expected on $w_m$ in $H^1_0(\Omega)$.
\end{remark}

%------------------------------------------------------------------------------%
% Bibliography
%------------------------------------------------------------------------------%

%\printbibliography

\bibliographystyle{abbrv}
\bibliography{gdm_ho_ml}

\end{document}